\font \sevenrm=cmr7
\font \fiverm=cmr5
 \newcommand{\nc}{\newcommand}
\newenvironment{disarray}%
 {\everymath{\displaystyle\everymath{}}\array}%
 {\endarray}
\newtheorem{thm}{Theorem}
\newtheorem{cor}[thm]{Corollary}
\newtheorem{con}[thm]{Conjecture}
\newtheorem{lem}[thm]{Lemma}
\newtheorem{prop}[thm]{Proposition}
\newtheorem{defn}{Definition}
\newtheorem{rmk}[thm]{Remark}
\nc{\comment}[1]{[[{\tt #1}]] }
\nc{\Cal}[1]{{\mathcal {#1}}}
\nc{\mop}[1]{\mathop{\hbox {\rm #1} }\nolimits}
\nc{\gmop}[1]{\mathop{\hbox {\bf #1} }\nolimits}
\nc{\smop}[1]{\mathop{\hbox {\sevenrm #1} }\nolimits}
\nc{\ssmop}[1]{\mathop{\hbox {\fiverm #1} }\nolimits}
\nc{\mopl}[1]{\mathop{\hbox {\rm #1} }\limits}
\nc{\smopl}[1]{\mathop{\hbox {\sevenrm #1} }\limits}
\nc{\ssmopl}[1]{\mathop{\hbox {\fiverm #1} }\limits}
\nc{\frakg}{{\frak g}}
\nc{\g}[1]{{\frak {#1}}}
\def \restr#1{\mathstrut_{\textstyle |}\raise-6pt\hbox{$\scriptstyle #1$}}
\def \srestr#1{\mathstrut_{\scriptstyle |}\hbox to
  -1.5pt{}\raise-4pt\hbox{$\scriptscriptstyle #1$}}
\nc{\wt}{\widetilde} \nc{\wh}{\widehat}
\nc{\redtext}[1]{\textcolor{red}{#1}}
\nc{\bluetext}[1]{\textcolor{blue}{#1}}
\nc\fleche[1]{\mathop{\hbox to #1 mm{\rightarrowfill}}\limits}
\nc{\ignore}[1]{}
\def\semi{\mathrel{\times}\kern -.85pt\joinrel\mathrel{\raise
    1.4pt\hbox{${\scriptscriptstyle |}$}}}
\nc\R{{\mathbb R}}
\nc\N{{\mathbb N}}
\nc\inver{^{-1}}
\nc\point{\hbox{\bf .}}
\nc\un{\hbox{\bf 1}}
\def\racine{{\scalebox{0.3}{ 
\begin{picture}(12,12)(38,-38)
\SetWidth{0.5} \SetColor{Black} \Vertex(45,-33){5.66}
\end{picture}}}}
 \def\arbrea{\,{\scalebox{0.15}{ 
  \begin{picture}(8,55) (370,-248)
    \SetWidth{2}
    \SetColor{Black}
    \Line(374,-244)(374,-200)
    \Vertex(374,-197){9}
    \Vertex(375,-245){12}
  \end{picture}
}}\,}
 \def\arbreba{\,{\scalebox{0.15}{ 
\begin{picture}(8,106) (370,-197)
    \SetWidth{2}
    \SetColor{Black}
    \Line(374,-193)(374,-149)
    \Vertex(374,-146){9}
    \Vertex(375,-194){12}
    \Line(374,-142)(374,-98)
    \Vertex(374,-95){9}
  \end{picture}
}}\,}
 \def\arbrebb{\,{\scalebox{0.15}{ 
  \begin{picture}(48,48) (349,-255)
    \SetWidth{2}
    \SetColor{Black}
    \Vertex(375,-252){12}
    \Line(376,-250)(395,-215)
    \Line(373,-251)(354,-214)
    \Vertex(353,-211){9}
    \Vertex(395,-213){9}
  \end{picture}
}}}
\def\arbreca{\,{\scalebox{0.15}{
\begin{picture}(8,156) (370,-147)
    \SetWidth{2}
    \SetColor{Black}
    \Line(374,-143)(374,-99)
    \Vertex(374,-96){9}
    \Vertex(375,-144){12}
    \Line(374,-92)(374,-48)
    \Vertex(374,-45){9}
    \Line(374,-42)(374,2)
    \Vertex(374,5){9}
  \end{picture}
}}\,}
\def\arbrecb{\,{\scalebox{0.15}{
\begin{picture}(48,94) (349,-255)
\SetWidth{2}
\SetColor{Black}
\Line(376,-204)(395,-169)
\Line(373,-205)(354,-168)
\Vertex(353,-165){9}
\Vertex(395,-167){9}
\Vertex(374,-205){9}
\Line(374,-246)(374,-209)
\Vertex(374,-252){12}
\end{picture}}}\,}
\def\arbrecc{\,{\scalebox{0.15}{
 \begin{picture}(48,98) (349,-205)
    \SetWidth{2}
    \SetColor{Black}
    \Vertex(375,-202){12}
    \Line(376,-200)(395,-165)
    \Line(373,-201)(354,-164)
    \Vertex(353,-161){9}
    \Vertex(395,-163){9}
    \Line(353,-160)(353,-113)
    \Vertex(353,-111){9}
  \end{picture}
}}\,}
\def\arbrecd{\,{\scalebox{0.15}{
\begin{picture}(48,52) (349,-251)
    \SetWidth{2}
    \SetColor{Black}
    \Vertex(375,-248){12}
    \Line(376,-246)(395,-211)
    \Line(373,-247)(354,-210)
    \Vertex(353,-207){9}
    \Vertex(395,-209){9}
    \Line(375,-247)(375,-206)
    \Vertex(376,-203){9}
  \end{picture}
 }}\,}
\def\arbreda{\,{\scalebox{0.15}{
\begin{picture}(8,204) (370,-99)
    \SetWidth{2}
    \SetColor{Black}
    \Line(374,-95)(374,-51)
    \Vertex(374,-48){9}
    \Vertex(375,-96){12}
    \Line(374,-44)(374,0)
    \Vertex(374,3){9}
    \Line(374,6)(374,50)
    \Vertex(374,53){9}
    \Line(374,53)(374,98)
    \Vertex(374,101){9}
  \end{picture}
}}\,}
\def\arbredb{\,{\scalebox{0.15}{
\begin{picture}(48,135) (349,-255)
    \SetWidth{2}
    \SetColor{Black}
    \Line(376,-163)(395,-128)
    \Line(373,-164)(354,-127)
    \Vertex(353,-124){9}
    \Vertex(395,-126){9}
    \Vertex(374,-164){9}
    \Line(374,-205)(374,-168)
    \Vertex(374,-207){9}
    \Line(374,-248)(374,-211)
    \Vertex(374,-252){12}
  \end{picture}
}}\,}
\def\arbredc{\,{\scalebox{0.15}{
 \begin{picture}(48,150) (349,-205)
    \SetWidth{2}
    \SetColor{Black}
    \Line(376,-148)(395,-113)
    \Line(373,-149)(354,-112)
    \Vertex(353,-109){9}
    \Vertex(395,-111){9}
    \Line(353,-108)(353,-61)
    \Vertex(353,-59){9}
    \Line(374,-200)(374,-153)
    \Vertex(374,-149){9}
    \Vertex(374,-202){12}
  \end{picture}
}}\,}
\def\arbredd{\,{\scalebox{0.15}{
 \begin{picture}(48,99) (349,-251)
    \SetWidth{2}
    \SetColor{Black}
    \Line(376,-199)(395,-164)
    \Line(373,-200)(354,-163)
    \Vertex(353,-160){9}
    \Vertex(395,-162){9}
    \Vertex(376,-156){9}
    \Vertex(376,-248){12}
    \Line(375,-245)(375,-204)
    \Line(375,-200)(375,-159)
    \Vertex(375,-201){9}
  \end{picture}
}}\,}
\def\arbrede{\,{\scalebox{0.15}{
 \begin{picture}(48,153) (349,-150)
    \SetWidth{2}
    \SetColor{Black}
    \Vertex(375,-147){12}
    \Line(376,-145)(395,-110)
    \Line(373,-146)(354,-109)
    \Vertex(353,-106){9}
    \Vertex(395,-108){9}
    \Line(353,-105)(353,-58)
    \Vertex(353,-56){9}
    \Line(353,-52)(353,-5)
    \Vertex(353,-1){9}
  \end{picture}
}}\,}
\def\arbredf{\,{\scalebox{0.15}{
\begin{picture}(48,98) (349,-205)
    \SetWidth{2}
    \SetColor{Black}
    \Vertex(375,-202){12}
    \Line(376,-200)(395,-165)
    \Line(373,-201)(354,-164)
    \Vertex(353,-161){9}
    \Vertex(395,-163){9}
    \Line(353,-160)(353,-113)
    \Vertex(353,-111){9}
    \Line(395,-159)(395,-112)
    \Vertex(395,-111){9}
  \end{picture}
}}\,}
\def\arbredz{\,{\scalebox{0.15}{
  \begin{picture}(68,88) (329,-215)
    \SetWidth{2}
    \SetColor{Black}
    \Vertex(375,-212){12}
    \Line(376,-210)(395,-175)
    \Line(373,-211)(354,-174)
    \Vertex(353,-171){9}
    \Vertex(395,-173){9}
    \Line(351,-168)(332,-131)
    \Line(355,-168)(374,-133)
    \Vertex(333,-131){9}
    \Vertex(374,-131){9}
  \end{picture}
}}\,}
\def\arbredg{\,{\scalebox{0.15}{
\begin{picture}(48,98) (349,-205)
    \SetWidth{2}
    \SetColor{Black}
    \Vertex(375,-202){12}
    \Line(376,-200)(395,-165)
    \Line(373,-201)(354,-164)
    \Vertex(353,-161){9}
    \Vertex(395,-163){9}
    \Line(375,-201)(375,-160)
    \Vertex(376,-157){9}
    \Vertex(376,-111){9}
    \Line(375,-155)(375,-114)
  \end{picture}
}}\,}
\def\arbredh{\,{\scalebox{0.15}{
 \begin{picture}(90,46) (330,-257)
    \SetWidth{2}
    \SetColor{Black}
    \Vertex(375,-254){12}
    \Line(376,-252)(395,-217)
    \Vertex(395,-215){9}
    \Line(374,-254)(335,-226)
    \Vertex(334,-224){9}
    \Line(375,-252)(356,-215)
    \Vertex(355,-215){9}
    \Line(374,-255)(417,-227)
    \Vertex(418,-225){9}
  \end{picture}
}}\,}
\def\arbreea{\,{\scalebox{0.15}{
 \begin{picture}(8,251) (370,-52)
    \SetWidth{2}
    \SetColor{Black}
    \Line(374,-48)(374,-4)
    \Vertex(374,-1){9}
    \Vertex(375,-49){12}
    \Line(374,3)(374,47)
    \Vertex(374,50){9}
    \Line(374,53)(374,97)
    \Vertex(374,100){9}
    \Vertex(374,148){9}
    \Line(374,149)(374,194)
    \Line(374,100)(374,144)
    \Vertex(374,195){9}
  \end{picture}
}}\,}
\def\arbreez{\,{\scalebox{0.15}{
\begin{picture}(90,48) (330,-255)
    \SetWidth{2}
    \SetColor{Black}
    \Vertex(375,-252){12}
    \Line(376,-250)(395,-215)
    \Vertex(395,-213){9}
    \Line(374,-252)(335,-224)
    \Vertex(334,-222){9}
    \Line(375,-250)(356,-213)
    \Vertex(355,-213){9}
    \Line(374,-253)(417,-225)
    \Vertex(418,-223){9}
    \Line(375,-252)(375,-210)
    \Vertex(375,-211){9}
  \end{picture}}}\,}
\def\shu{\joinrel{\!\scriptstyle\amalg\hskip -3.1pt\amalg}\,}
\begin{document}

\title[Two interacting Hopf algebras of trees]{Two interacting Hopf algebras of trees\\[0.2cm] {\small{a Hopf-algebraic approach to composition\\ and
substitution of B-series}}}


\author{Damien Calaque}
\address{Dept. of Math., 
	   ETH Z\"urich, 8092 Z\"urich, Switzerland. On leave of absence from Univ. Lyon 1, France.}
	   \email{calaque@math.univ-lyon1.fr,damien.calaque@math.ethz.ch}
 	  \urladdr{http://math.univ-lyon1.fr/~calaque/}

\author{Kurusch Ebrahimi-Fard}
\address{Universit\'e de Haute Alsace,
         4, rue des Fr\`eres Lumi\`ere,
         68093 Mulhouse, France}
         \email{kurusch.ebrahimi-fard@uha.fr}         
	  \urladdr{http://www.th.physik.uni-bonn.de/th/People/fard/}

\author{Dominique Manchon}
\address{Universit\'e Blaise Pascal,
         C.N.R.S.-UMR 6620,
         63177 Aubi\`ere, France}       
         \email{manchon@math.univ-bpclermont.fr}
         \urladdr{http://math.univ-bpclermont.fr/~manchon/}

\date{August, 24th 2009}

\begin{abstract}
Hopf algebra structures on rooted trees are by now a well-studied object, especially in the context of combinatorics. In this work we consider a Hopf algebra $\Cal H$ by introducing a coproduct on a (commutative) algebra of rooted forests, considering each tree of the forest (which must contain at least one edge) as a Feynman-like graph without loops. The primitive part of the graded dual is endowed with a pre-Lie product defined in terms of insertion of a tree inside another. We establish a surprising link between the Hopf algebra $\Cal H$ obtained this way and the well-known Connes--Kreimer Hopf algebra of rooted trees $\Cal H_{\makebox{{\tiny{\rm{CK}}}}}$ by means of a natural $\Cal H$-bicomodule structure on $\Cal H_{\makebox{{\tiny{CK}}}}$. This enables us to recover recent results in the field of numerical methods for differential equations due to Chartier, Hairer and Vilmart as well as Murua. 

%
%

\bigskip

\noindent {\bf{Keywords}}: combinatorial Hopf algebras; rooted trees; quasi-shuffle algebra; B-series; composition and substitution laws; Butcher group; backward error analysis; Magnus expansion.

\smallskip

\noindent {\bf{Math. subject classification}}: Primary: 16W30; 05C05; 16W25; 17D25; 37C10 Secondary: 81T15.

\end{abstract}

%
%

\maketitle





\section{Introduction}
\label{sect:intro}

Since the pioneering work of Cayley \cite{Cay} on the use of rooted trees in the context of differential equations, many more instances where trees play a significant role have appeared in the literature. The work of  Butcher \cite{Butcher1}, Grossman and Larson \cite{GL} and, more recently, Munthe--Kaas and Wright~\cite{MKW} in the field of numerical analysis, as well as the seminal findings of Kreimer and Connes in the context of the process of renormalization in perturbative quantum field theory \cite{CK1} marked such moments. In these works it is the notion of Hopf algebra defined on rooted trees that characterizes genuine combinatorial aspects of the underlying problems. The discovery of these algebraic structures lead to more transparency eventually allowing to obtain profound insights which significantly altered the picture. (See, for example, \cite{CM,CK2}.) Since then the field of combinatorial Hopf algebra has emerged and many more rooted tree Hopf algebras have been studied. 

\smallskip

Recall Rota and Joni's \cite{JR} observation that various combinatorial objects naturally possess compatible product and coproduct structures, ultimately converging into the notion of graded connected Hopf algebras, now referred to as combinatorial Hopf algebras. See Schmitt's seminal paper \cite{Sch}. Generally speaking, such combinatorial Hopf algebras consist of a graded vector space where the homogeneous components are spanned by finite sets of combinatorial objects, such as planar or non-planar rooted trees, and the (co)algebraic structures are given by particular constructions on those objects. In fact, roughly, one may distinguish between two complementary classes of combinatorial Hopf algebras on rooted trees, those with a simple algebra structure and a more complicated coalgebra structure and their graded duals. The aforementioned works by Grossman and Larson, Munthe--Kaas and Wright, and Kreimer and Connes provide genuine examples. (See also Loday and Ronco \cite{LR}.) 

In this paper, we introduce on the graded commutative polynomial algebra of finite type over the field $k$ generated by rooted forests a coproduct defined by considering each tree of the forest as a Feynman graph without loops. This Hopf algebra with its coproduct, is graded by the number of edges denoted by $e$. It differs from Connes--Kreimer's rooted tree Hopf algebra \cite{CK1}, see also \cite{K}, which is graded by the number $v$ of vertices, and where the coproduct is defined in terms of admissible cuts. For each rooted tree $t$ one has $v(t) = e(t) + 1$. We show that the primitive part of the graded dual is endowed with a natural pre-Lie product defined in terms of insertion of a tree inside another.

However, we establish an apparently surprising and useful link between the Hopf algebra $\Cal H$ obtained this way and the aforementioned Connes--Kreimer Hopf algebra of rooted trees $\Cal H_{\makebox{{\tiny{CK}}}}$ by means of a $\Cal H$-bicomodule structure on $\Cal H_{\makebox{{\tiny{CK}}}}$ obtained by a natural slight extension of the coproduct of $\Cal H$. More precisely we show (see Proposition \ref{biderivation}) that for any infinitesimal character $a$ of $\Cal H$, the associated left coaction operator $^{\rm{t}}\! L_a: \Cal H_{\makebox{{\tiny{CK}}}} \to \Cal H_{\makebox{{\tiny{CK}}}}$ is a biderivation. This in turn defines a Hopf algebra automorphism $^{\rm{t}}\! L_\varphi$ for any character $\varphi$ of the Hopf algebra $\Cal H$.

We recover this way results due to Chartier, Hairer and Vilmart \cite{CHV,CHVj,CHV09} about the compatibility between two products, which, roughly speaking, can be considered as the convolution products with respect to the two Hopf algebras above, see also \cite{CHV09,Vil08}. We compare the backward error analysis character $\omega$ of \cite{CHV, M} with the element $\log^*$ of Chapoton \cite{C}, thus applying the recursive formulas of \cite{CHV, M} to the pre-Lie Magnus expansion of \cite{EM}.  An operadic interpretation is outlined, and we finally recover some nice properties of the application $\omega$ by mapping surjectively Connes--Kreimer's Hopf algebra of rooted trees onto Hoffman's quasi-shuffle Hopf algebra in one generator~\cite{H1}.

\smallskip

The paper is organized a follows. Section \ref{sect:prelim} and section \ref{sect:rt}  briefly recall the basic notions of Hopf algebra and rooted trees, respectively. In section \ref{sect:hopf} a particular commutative and non-cocommutative Hopf algebra $\Cal H$ of rooted trees is studied in detail. The following two sections elaborate on the coproduct of this Hopf algebra and present refined combinatorial ways to calculate it. In section \ref{sect:antipo} a non-recursive formula for the antipode of $\Cal H$ is given. In section \ref{sect:log} a particular Hopf algebra character and its inverse are introduced, which are naturally related to the notion of backward error analysis. The following section \ref{sect:ckhopf} briefly recalls the Connes--Kreimer Hopf algebra $\Cal H_{\makebox{{\tiny{CK}}}}$ of rooted trees and establishes a relation to the rooted tree Hopf algebra $\Cal H$ by introducing a $\Cal H$-bicomodule structure on $\Cal H_{\makebox{{\tiny{CK}}}}$. Section \ref{sect:B-series} recalls the composition of $B$-series. The Hopf algebraic interpretation of Chartier--Hairer--Vilmart's substitution law for $B$-series is given. Section \ref{sect:comp} is a continuation of section \ref{sect:log}. Here a link to the quasi-shuffle Hopf algebra is established and used to calculate a particular infinitesimal character on the Connes--Kreimer Hopf algebra $\Cal H_{\makebox{{\tiny{CK}}}}$. Finally, the last section contains some explicit calculation of the antipode of $\Cal H$.


\section{Preliminaries}
\label{sect:prelim}

For later use in this subsection we briefly outline the (co)algebraic setting
in which we will work. Here, $k$ denotes the ground field (which will be
supposed to be of characteristic zero) over which all algebraic structures are
defined. The term {\it{algebra}} always means unital associative $k$-algebra,
denoted by the triple $(\Cal A,m_{\Cal A},\eta_{\Cal A})$, where $\Cal A$ is a
$k$-vector space with a product $m_{\Cal A}: \Cal A \otimes \Cal A \to \Cal A$
and a unit map $\eta_{\Cal A}: k \to \Cal A$. {\it{Coalgebras}} over $k$ are
denoted by the triple $(\Cal C,\Delta_{\Cal C},\epsilon_{\Cal C})$, where the
coproduct map $\Delta_{\Cal C}: \Cal C \to \Cal C \otimes \Cal C$ is
coassociative and $\epsilon_{\Cal C}: \Cal C \to k$ denotes the counit map. A
subspace $\Cal J \subset \Cal C$ is called a left (right) {\it{coideal}} if
$\Delta_{\Cal C}(\Cal J) \subset \Cal C \otimes \Cal J$ ($\Delta_{\Cal C}(\Cal
J) \subset \Cal J \otimes \Cal C$). A right {\it{comodule}} over $\Cal
C$ is a $k$-vector space $\Cal M$ together with a linear map $\psi: \Cal M \to
\Cal M \otimes \Cal C$, such that $(\mop{Id}_{\Cal M } \otimes \Delta_{\Cal
  C})\circ \psi = (\psi \otimes \mop{Id}_{ \Cal C}) \circ  \psi$ and
$(\mop{Id}_{\Cal M } \otimes \epsilon_{\Cal C})\circ \psi = \mop{Id}_{\Cal M
}$ (analogously for left comodules). A {\it{bicomodule}} over $\Cal
C$ is a $k$-vector space $\Cal M$ together with two linear maps $\psi: \Cal M
\to\Cal M \otimes \Cal C$ and $\phi: \Cal M\to  \Cal C\otimes\Cal M$ which
endow $\Cal M$ with a structure of right- and left comodule, respectively, and
such that the compatibility condition:
\begin{equation}
(\mop{Id}_{\Cal C}\otimes\psi)\circ\phi=(\phi\otimes\mop{Id}_{\Cal C})\circ\psi
\end{equation}
holds. A {\it{bialgebra}} consists of an algebra and coalgebra structure together with compatibility relations. We denote a {\it{Hopf algebra}} by $(\Cal H,m_{\Cal H},\eta_{\Cal H},\Delta_{\Cal H},\epsilon_{\Cal H},S)$. It is a bialgebra together with a particular $k$-linear map, i.e. the {\it{antipode}} $S: \Cal H \to \Cal H$, satisfying the Hopf algebra axioms~\cite{Abe80,Berg85,Sw69}.  In the following we omit subscripts if there is no danger of confusion.  

Let $\Cal H$ be a connected filtered bialgebra:
$$
    k=\Cal H^{(0)} \subset \Cal H^{(1)} \subset \cdots \subset 
      \Cal H^{(n)} \subset \cdots, \hskip 6mm \bigcup_{n\ge 0} \Cal H^{(n)}=\Cal H,
$$
and let $\Cal A$ be any commutative algebra. The space $\Cal
L(\Cal H,\Cal A)$ of linear maps from $\Cal H$ to $\Cal A$
together with the convolution product $f \star g := m_{\Cal A}
\circ (f \otimes g) \circ \Delta$, $ f,g \in \Cal L(\Cal H,\Cal A)$ is an algebra with unit $e:=\eta_{\Cal A} \circ \epsilon$.
The filtration of $\Cal H$ implies a decreasing filtration on
$\Cal L(\Cal H,\Cal A)$ and $\Cal L(\Cal H,\Cal
A)$ is complete with respect to the induced topology, e.g. see \cite{Man} for more details.
The subset $\g g_0 \subset \Cal L(\Cal H,\Cal A)$ of
linear maps $\alpha$ that send the bialgebra unit to zero,
$\alpha(1)=0$, forms a Lie algebra in $\Cal L(\Cal H,\Cal A)$. The
exponential:
$$
    \exp^\star(\alpha) = \sum_{k\ge 0} \frac{1}{k!}\alpha^{\star k}
$$
makes sense and is a bijection from $\g g_0$ onto the group
$G_0=e+\g g_0$ of linear maps $\gamma$ that send the bialgebra
unit to the algebra unit, $\alpha(1)=1_{\Cal A}$.

An {\it{infinitesimal character}} with values in $\Cal A$ is a linear map
$\xi \in \Cal L(\Cal H,\Cal A)$ such that for $x,y \in \Cal H$:
\begin{equation}
\label{infinitesimal}
    \xi(xy) = \xi(x)e(y) + e(x)\xi(y).
\end{equation}
We denote by $\g g_{\Cal A} \subset \g g_0$ the linear space of
infinitesimal characters. We call an $\Cal A$-valued map $\rho$ in
$\Cal L(\Cal H,\Cal A)$ a {\it{character}} if for $x,y \in \Cal H$:
\begin{equation}
\label{character}
    \rho(xy) = \rho(x)\rho(y).
\end{equation}
The set of such unital algebra morphisms is denoted by $G_{\Cal A} \subset G_0$. It is easily verified that the set $G_{\Cal A}$ of characters from $\Cal H$ to $\Cal A$ forms a group for the convolution product. In fact it is the pro-unipotent {\sl{group of $\Cal A$-valued morphisms on the bialgebra $\Cal H$}\/}{}. And $\g g_{\Cal A}$ in $\g g_0$ is the corresponding pro-nilpotent Lie algebra. The exponential map $\exp^{\star}$ restricts to a bijection between $\g g_{\Cal A}$ and $G_{\Cal A}$. The neutral element $e:=\eta_{\Cal A}\circ \epsilon$  in $G_{\Cal A}$ is given
by $e(1)=1_{\Cal A}$ and $e(x)=0$ for $x \in \mop{Ker}\epsilon$. The inverse of $\varphi \in G_{\Cal A}$ is given by composition with the antipode $S$:
\begin{equation}
\label{inverse}
    \varphi^{\star -1} = \varphi \circ S.
\end{equation}
Recall the twisting map $\tau_{\Cal V} (a \otimes b):=b \otimes a$ for $a,b$, say, in the  $k$-vector space $\Cal V$.

For completeness we ask the reader to recall the notion of left {\it{pre-Lie algebra}} \cite{AG,ChaLiv,DL}. A left pre-Lie algebra is a $k$-vector space $\Cal P$ with a binary composition $\rhd$ that satisfies the left pre-Lie identity:
\begin{eqnarray}
\label{prelie}
    (a\rhd b)\rhd c-a\rhd(b\rhd c)&=& (b\rhd a)\rhd c-b\rhd(a\rhd c),
\end{eqnarray}
for $a,b,c \in \Cal P$ and with an analogous identity for right pre-Lie algebra. For $a,b \in \Cal P$ the bracket $[a,b]:=a \rhd b - b \rhd a$ satisfies the Jacobi identity and therefore turns $\Cal P$ into a Lie algebra.


\section{Rooted trees and rooted forests}
\label{sect:rt}

Recall, that a --non-planar-- {\sl rooted tree\/} is either the empty set, or a finite connected simply connected oriented graph such that every vertex has exactly one incoming edge, except for a distinguished vertex (the root) which has no incoming egde. A vertex without outgoing edges is called a leaf. 
We list all rooted trees up to degree $5$: 
$$
	\racine,\hskip 8mm  \arbrea,\hskip 8mm  \arbreba,\  \arbrebb,\hskip 8mm  \arbreca,\ \arbrecb,\ \arbrecc,\ \arbrecd,\hskip 8mm 
	 \arbreda,\ \arbredb,\ \arbredc,\ \arbredd,\ \arbrede,\ \arbredf,\
     \arbredz,\ \arbredg,\ \arbredh.
$$ 
A {\sl rooted forest\/} is a finite collection $s=(t_1,\ldots, t_n)$ of rooted trees, which we simply denote by the (commutative) product $t_1 \cdots t_n$. The operator $B_+$ (e.g. see \cite{F}) associates to the forest $s$ the tree $B_+(s)$ obtained by grafting the root of each connected component $t_j$ on a common new root. $B_+(\emptyset)$ is the unique rooted tree $\begin{matrix}  \\[-0.5cm]  \racine \end{matrix}$ with only one vertex.  Recall the definition of several important numbers associated to a rooted forest $s = t_1 \cdots t_n$:
\begin{enumerate}

\item The number of vertices $v(s)=\sum_{j=1}^{n} v(t_j)$.

\item The number of edges $e(s)=\sum_{j=1}^{n} e(t_j)$. For a non-empty tree $t$ we have $e(t)=v(t)-1$.

\item The tree factorial, recursively defined (with respect to the number of vertices) by $\begin{matrix}  \\[-0.5cm]  \racine \end{matrix}\hspace{-0.1cm}\ !=1$ and:
\begin{equation*}
	\big(B_+(t_1 \cdots t_n)\big)! = v\big(B_+(t_1\cdots t_n)\big)\prod_{j=1}^nt_j!
		=\Big(1+\sum_{j=1}^nv(t_j)\Big)\prod_{j=1}^nt_j!.
\end{equation*}
The tree factorial is multiplicatively extended to forests: $(t_1\cdots t_n)! := t_1! \cdots t_n!$.

\item The internal symmetry factor
  $\sigma(s)=\prod_{j=1}^n|\mop{Aut}t_j|$. Note that it differs
  from the true symmetry factor $\sigma\big(B_+(s)\big)$ of the forest $s$,
  which is not multiplicative.
  
\item The Connes--Moscovici coefficient of a tree $t$, e.g. see \cite{Br, F,KreimerChen}, defined by:
\begin{equation}
\label{CM}
	\mop{CM}(t)=\frac{v(t)!}{t!\sigma(t)}.
\end{equation}
\end{enumerate}


\section{The rooted tree Hopf algebra $\Cal H$}
\label{sect:hopf}


\subsection{Definition}
\label{def}

Let $T$ be the $k$-vector space spanned by -non-planar- rooted trees, {\sl excluding the
  empty tree\/}. Write $T=T' \oplus k \begin{matrix}  \\[-0.5cm]  \racine
\end{matrix}$ where $\begin{matrix}  \\[-0.5cm]  \racine \end{matrix}$ stands
for the unique one-vertex tree, and where $T'$ is the $k$-vector space spanned by
rooted trees with at least one edge. Consider the symmetric algebra $\Cal H =
\Cal S(T')$, which can be seen as the $k$-vector space generated by rooted
forests with all connected components containing at least one edge. One identifies the unit of $ \Cal S(T')$ with the rooted tree $\begin{matrix}  \\[-0.5cm]  \racine \end{matrix}$~. A {\sl subforest\/} of a tree $t$ is either the trivial forest $\begin{matrix}  \\[-0.5cm]  \racine \end{matrix}\hspace{0.1cm}\! $, or a collection $(t_1,\ldots ,t_n)$ of pairwise disjoint subtrees of $t$, each of them containing at least one edge. In particular two subtrees of a subforest cannot have any common vertex.\\

Let $s$ be a subforest of a rooted tree $t$. Denote by $t / s$ the tree obtained by contracting each connected component of $s$ onto a vertex. We turn $\Cal H $ into a bialgebra by defining a coproduct $\Delta : \Cal H \to \Cal H \otimes  \Cal H$ on each tree $t \in T'$ by~:
\begin{equation}
\label{newcoprod}
	\Delta(t)=\sum_{s\subseteq t} s \otimes t/s,
\end{equation}
where the sum runs over all possible subforests (including the unit $\begin{matrix}  \\[-0.5cm]  \racine \end{matrix}$ and the full subforest $t$). Here are two examples:
\allowdisplaybreaks{
\begin{eqnarray*}
	\Delta(\arbreba) &=&\arbreba \otimes \racine + \racine \otimes \arbreba + 2 \arbrea \otimes \arbrea\\
	\Delta(\arbrecd) &=&\arbrecd \otimes \racine + \racine \otimes \arbrecd + 3 \arbrebb \otimes \arbrea + 3  \arbrea \otimes \arbrebb\ .
\end{eqnarray*}}
As usual we extend the coproduct $\Delta$ multiplicatively onto $ \Cal S(T')$. In fact, co-associativity is easily verified. This makes $\Cal H:=\bigoplus_{n \ge 0}\Cal H_n$ a connected graded bialgebra, hence a Hopf algebra, where the grading is defined in terms of the number of edges. The antipode $S : \Cal H \to \Cal H$ is given (recursively with respect to the number of edges) by one of the two following formulas:
\begin{eqnarray}
	S(t) & = & -t-\sum_{s, \racine \, \not= s\subsetneq t}S(s)\ t/s, \\
	S(t) & = & -t-\sum_{s, \racine \, \not= s\subsetneq t}s\ S(t/s).
	\label{eq-anti-recu}
\end{eqnarray}

\subsection{The rooted tree bialgebra $\wt {\Cal H}$}
\label{ssect:bialgebra}

Consider the symmetric algebra $\wt {\Cal H}=S(T)$, which can be seen as the $k$-vector space generated by rooted forests. A {\sl spanning subforest\/} of a tree $t$ is a collection $(t_1,\ldots ,t_n)$ of disjoint subtrees, such that any vertex of $t$ belongs to one (unique) $t_j$. The contracted tree $t/s$ is defined the same way as above, except that the contraction is effective only for subtrees with at least two vertices. This is not a Hopf algebra since there is no inverse for the grouplike element $\begin{matrix}  \\[-0.5cm]  \racine \end{matrix}\hspace{0.1cm}\! $, but the Hopf algebra $\Cal H$ can be recovered as $\wt{\Cal H}/\Cal J$, where $\Cal J$ is the ideal of $\wt{\Cal H}$ spanned by $\un-\begin{matrix}  \\[-0.5cm]  \racine \end{matrix}\ $. The set $\wt G$ of characters of the bialgebra $\wt{\Cal H}$ forms a monoid for the convolution product. The set $G$ of characters of the Hopf algebra $\Cal H$ is a group, and can be seen as the submonoid of $\wt G$ formed by the elements $\varphi$ such that $\varphi(\! \begin{matrix}  \\[-0.5cm]  \racine \end{matrix})=1$.


\subsection{The associated pre-Lie structure}
\label{ssect:pre-lie}

Denote by $(Z_s)$ the dual basis in the graded dual $\Cal H^\circ$ of the forest basis of $\Cal H$. The correspondence $Z:s\mapsto Z_s$ extends linearly to a unique vector space isomorphism from $\Cal H$ onto $\Cal H^\circ$. For any tree $t$ the corresponding $Z_t$ is an infinitesimal character of $\Cal H$, i.e. it is a primitive element of $\Cal H^\circ$. We denote by $\star$ the (convolution) product of $\Cal H^\circ$. From the formula directly stemming from the definition:
\begin{equation*}
	(Z_t\star Z_u-Z_u\star Z_t)(v) = \sum_{s}Z_t(s)Z_u(v/s)-\sum_{s'}Z_u(s')Z_t(v/s')
\end{equation*}
we see that if $t$ and $u$ are trees, the two sums on the right-hand side run over classes of subtrees only (because $Z_t$ and $Z_u$ vanish on forests containing more than one tree). Looking more closely at it we arrive at the following:
\begin{equation*}
	Z_t\star Z_u-Z_u\star Z_t = Z_{t\rhd u-u\rhd t}.
\end{equation*}
Here $t\rhd u$ is obtained by inserting $t$ inside $u$, namely:
\begin{equation*}
	t\rhd u=\sum_{v,\,t\subset v \smop{and} v/t=u}N(t,u,v)v,
\end{equation*}
where $N(t,u,v)$ is the number of subtrees of $v$ isomorphic to $t$ such that $v/t$ is isomorphic to $u$. By the Cartier--Milnor--Moore theorem, the graded dual $\Cal H^\circ$ is isomorphic as a Hopf algebra to the enveloping algebra $\Cal U(\g g)$, where $\g g=\mop{Prim}\Cal H^\circ$ is the Lie algebra spanned by the $Z_t$'s for rooted trees $t$. The product $\rhd$ satisfies the left pre-Lie relation (\ref{prelie}). This pre-Lie structure can of course be transported on $\g g$ by setting $Z_t\rhd Z_u:=Z_{t\rhd u}$, and the Lie bracket is given by:
\begin{equation*}
	[Z_t,\,Z_u] = Z_t\star Z_u-Z_u\star Z_t
	                   =Z_t\rhd Z_u-Z_u\rhd Z_t.
\end{equation*}


\subsection{Another normalization}
\label{ssect:normalisation}

Denote by $A_\sigma:\Cal H\to\Cal H$ the linear map defined by $A_\sigma(s)=\sigma(s)s$ for any rooted forest $s$, where $\sigma(s)$ is the internal symmetry factor (see section \ref{sect:rt}). The map $A_\sigma$ is an algebra automorphism thanks to the multiplicativity of $\sigma$. We modify the coproduct according to $A_\sigma$, for each tree $t$ we define:
\begin{equation*}
	\Delta_\sigma(t):=(A_\sigma\otimes A_\sigma)\circ\Delta\circ A_\sigma^{-1}(t)
	=\sum_{s}\frac{\sigma(s)\sigma(t/s)}{\sigma(t)}s\otimes t/s,
\end{equation*}
where the sum runs over the subforests of $t$. Then $\Cal H$ endowed with this coproduct (without changing the product) is a Hopf algebra with antipode $S_\sigma=A_\sigma\circ S\circ A_\sigma^{-1}$. The associated pre-Lie structure on the graded dual $\Cal H^\circ$ is given by $Z_t\rhd_\sigma Z_u=Z_{t\rhd_\sigma u}$, with:
\begin{equation*}
	t\rhd_\sigma u=\sum_{v,\,t\subset v\smop{and} v/t=u}M(t,u,v)v,
\end{equation*}
where $M(t,u,v)=\displaystyle\frac{\sigma(t)\sigma(u)}{\sigma(v)}N(t,u,v)$ can be interpreted as the number of ways to insert the tree $t$ inside the tree $u$ in order to get the tree $v$. See \cite{H2} for more on the combinatorics of rooted trees and Hopf algebras.


\section{Some coproduct computations}
\label{sect:cocomp}

We will denote by $E_n$ the $n$-edged ladder (hence containing $n+1$ vertices), and by $C_n$ the $n$-edged corolla, i.e. the tree built from $n$ vertices each linked by one edge to a common root. $E_0$ and $C_0$ both coincide
with the unit $\begin{matrix}  \\[-0.5cm]  \racine \end{matrix}$.

\begin{prop}
\label{deltacorolla}
	$\Delta(C_n)=\sum_{p=0}^n{n\choose p}C_p\otimes C_{n-p}$ and $\Delta_\sigma(C_n)=\sum_{p=0}^nC_p\otimes C_{n-p}$.
\end{prop}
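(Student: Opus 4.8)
The plan is to compute $\Delta(C_n)$ directly from the definition (\ref{newcoprod}) by enumerating the subforests of the corolla $C_n$, and then to deduce the formula for $\Delta_\sigma$ using the relation between $\Delta_\sigma$ and $\Delta$ together with the value of the internal symmetry factor $\sigma$ on corollas.

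First I would describe the subforests of $C_n$. Recall that $C_n$ consists of a root $r$ together with $n$ leaves, each joined to $r$ by a single edge. Since every edge of $C_n$ is incident to $r$, any subtree of $C_n$ containing at least one edge must contain $r$, hence is of the form ``$r$ together with a nonempty subset of the leaves'', that is, a sub-corolla. Consequently any two subtrees of $C_n$ with at least one edge share the vertex $r$ and cannot be disjoint; therefore a subforest $s \subseteq C_n$ is either the trivial forest $\racine$ or a single sub-corolla. For each $p$ with $1 \le p \le n$, the sub-corollas with exactly $p$ edges are in bijection with the $p$-element subsets of the set of leaves, so there are $\binom{n}{p}$ of them, and each is isomorphic to $C_p$. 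Contracting such a sub-corolla $C_p$ inside $C_n$ collapses $r$ together with the $p$ chosen leaves to a single vertex, to which the remaining $n-p$ leaves stay attached; hence $C_n/C_p \cong C_{n-p}$. Taking into account the trivial subforest (contributing $\racine \otimes C_n$, matching $p=0$) and the full subforest $C_n$ (contributing $C_n \otimes \racine$, matching $p=n$), formula (\ref{newcoprod}) yields
\[
\Delta(C_n) = \sum_{p=0}^n \binom{n}{p}\, C_p \otimes C_{n-p}.
\]

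For the second identity I would use $\Delta_\sigma(t) = \sum_s \frac{\sigma(s)\sigma(t/s)}{\sigma(t)}\, s\otimes t/s$. The automorphism group of $C_k$ is the symmetric group permuting its $k$ leaves, so $\sigma(C_k) = |\mop{Aut} C_k| = k!$ (with $\sigma(C_0)=1$). Hence the weight attached to the term $C_p \otimes C_{n-p}$ is $\frac{p!\,(n-p)!}{n!} = \binom{n}{p}^{-1}$, and multiplying it by the multiplicity $\binom{n}{p}$ found above gives coefficient $1$ for each $p$, i.e. $\Delta_\sigma(C_n) = \sum_{p=0}^n C_p \otimes C_{n-p}$.

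There is essentially no hard step here: the only point that requires care is the enumeration of subforests, namely the observation that connectedness forces every nontrivial subtree of a corolla to contain the root, so that no nontrivial subforest of $C_n$ can have more than one component. Once this is in place, both formulas follow by straightforward bookkeeping (and the boundary cases $p=0,n$ are checked directly).
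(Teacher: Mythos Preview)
Your proof is correct and follows essentially the same approach as the paper's: identify that the only subforests of $C_n$ are the sub-corollas $C_p$ (each occurring $\binom{n}{p}$ times) with quotient $C_{n-p}$, and then use $\sigma(C_k)=k!$ to deduce the second formula. You have simply spelled out in more detail the reason why a nontrivial subforest of a corolla must be a single sub-corolla.
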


\begin{proof}
The only subforests of $C_n$ are the corollas $C_p,\,0\le p\le n$, each
appearing ${n\choose p}$ times, and $C_n/C_p=C_{n-p}$. The second equality
comes then from the obvious formula $\sigma(C_k)=k!$ for any $k$.
\end{proof}

\begin{cor}
Let $n\geq1$. Then
\begin{eqnarray}
S(C_n)&=&\sum_{1\leq r\leq n}(-1)^r\sum_{k_1+\cdots+k_r=n}\frac{n!}{k_1!\cdots k_r!}C_{k_1}\cdots
C_{k_r},\label{Scorolla}\\
S_\sigma(C_n)&=&\sum_{1\leq r\leq n}(-1)^r\sum_{k_1+\cdots+k_r=n}C_{k_1}\cdots C_{k_r}.
\label{Scorolla2}
\end{eqnarray}
\end{cor}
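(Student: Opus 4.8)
The strategy is to run an induction on $n$ using the recursive antipode formula (\ref{eq-anti-recu}) together with the explicit coproduct of Proposition \ref{deltacorolla}. As recalled in the proof of that proposition, the only subforests of $C_n$ are the corollas $C_p$ with $0\le p\le n$, each occurring $\binom{n}{p}$ times, with $C_n/C_p=C_{n-p}$; hence formula (\ref{eq-anti-recu}) specialises to
\[
S(C_n) = -C_n - \sum_{p=1}^{n-1} \binom{n}{p}\, C_p\, S(C_{n-p}),
\]
which expresses $S(C_n)$ in terms of the lower values $S(C_m)$, $1\le m<n$; the case $n=1$ gives $S(C_1)=-C_1$, which is (\ref{Scorolla}) for $n=1$.

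Next I would substitute the induction hypothesis for each $S(C_{n-p})$, writing the parts of the composition of $n-p$ as $k_2,\dots,k_r$ and setting $k_1:=p$. Using the elementary identity $\binom{n}{p}\cdot\frac{(n-p)!}{k_2!\cdots k_r!}=\frac{n!}{k_1!\,k_2!\cdots k_r!}$ whenever $k_1+\cdots+k_r=n$, the double sum over $p$ and over compositions of $n-p$ reorganises exactly into the sum over all ordered compositions $(k_1,\dots,k_r)$ of $n$ with $r\ge2$ and $k_i\ge1$, with coefficient $\frac{n!}{k_1!\cdots k_r!}$ and sign $-(-1)^{r-1}=(-1)^r$, the forest $C_p\cdot(C_{k_2}\cdots C_{k_r})$ being $C_{k_1}\cdots C_{k_r}$. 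Adding the leading term $-C_n$ (the $r=1$ contribution) yields precisely (\ref{Scorolla}), where the range of $r$ is $1\le r\le n$ because each $k_i\ge1$.

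For the second identity one argues identically, now with the other coproduct of Proposition \ref{deltacorolla}: the same recursion with $\Delta_\sigma$ in place of $\Delta$ reads $S_\sigma(C_n)=-C_n-\sum_{p=1}^{n-1}C_p\,S_\sigma(C_{n-p})$ (no binomial factors), so every multinomial coefficient above is replaced by $1$ and one obtains (\ref{Scorolla2}). Equivalently, both formulas follow in one stroke from the classical closed form $S=\sum_{r\ge1}(-1)^r\,m^{(r-1)}\circ\bar\Delta^{(r-1)}$ valid on the augmentation ideal of any connected graded bialgebra, once one checks that iterating the reduced coproduct of $C_n$ gives $\bar\Delta^{(r-1)}(C_n)=\sum_{k_1+\cdots+k_r=n,\ k_i\ge1}\frac{n!}{k_1!\cdots k_r!}\,C_{k_1}\otimes\cdots\otimes C_{k_r}$ (respectively the same expression without coefficients for $\bar\Delta_\sigma$) and then applies the iterated product $m^{(r-1)}$.

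\textbf{Main obstacle.} There is no real difficulty here, only combinatorial bookkeeping: one must verify the collapse of a product of successive binomial coefficients into a single multinomial coefficient, and make sure that in the inductive re-indexing every ordered composition of $n$ is produced exactly once and with the correct sign, which amounts to the bijection ``prepend the first block $k_1=p$ to a composition of $n-p$''.
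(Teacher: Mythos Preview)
Your argument is correct. The paper actually states this corollary without proof, leaving it as an immediate consequence of Proposition~\ref{deltacorolla} and the recursive antipode formula~(\ref{eq-anti-recu}); your induction (or equivalently the closed-form $S=\sum_{r\ge 1}(-1)^r m^{(r-1)}\circ\bar\Delta^{(r-1)}$ you mention at the end) is exactly the intended verification, and the bookkeeping you describe---collapsing $\binom{n}{p}\frac{(n-p)!}{k_2!\cdots k_r!}$ into the multinomial coefficient and re-indexing compositions by prepending the first part---is correct and complete.
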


Computing $\Delta(E_n)$ needs some combinatorics: for any integer $n\ge 1$ and
for $1\le r\le n$ consider the set $K_{n,r}$ of the length $r$ compositions of
the integer $n$, namely:
\begin{equation*}
	K_{n,r}:=\{(p_1,\ldots,p_r)\in(\N-\{0\})^r,\, p_1+\cdots +p_r=n\}.
\end{equation*}
Let us call {\sl block\/} of the composition $\pi$ one of the intervals~:
\begin{equation*}
	P_j(\pi):=\{p_1+\cdots +p_j+1,\ldots,p_1+\cdots +p_{j+1}\},\,j=0,\ldots,r-1.
\end{equation*}

\begin{prop}
\begin{equation}
\label{eq-comp}
	\Delta(E_n)=\sum_{r\ge 1}\sum_{\pi= (p_1,\ldots,p_r)\in K_{n,r}}
				  \left(\prod_{j\ge 1}E_{p_{2j}}\right)\otimes E_{\sum_{j\ge 1}p_{2j-1}}
				 +\left(\prod_{j\ge 1}E_{p_{2j-1}}\right)\otimes E_{\sum_{j\ge 1}p_{2j}}.
\end{equation}
\end{prop}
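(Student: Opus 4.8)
The plan is to analyze the subforests of the $n$-edged ladder $E_n$ directly, tracking which edges survive contraction. Label the vertices of $E_n$ from the root $v_0$ down to the leaf $v_n$, so that the edges are $\epsilon_i$ joining $v_{i-1}$ to $v_i$ for $i=1,\dots,n$. A subforest $s\subseteq E_n$ is a disjoint collection of subtrees, each with at least one edge; since $E_n$ is a ladder, every connected subgraph is again a ladder, so $s$ is determined by choosing a set of pairwise non-adjacent ``intervals'' of consecutive edges. Equivalently, $s$ is encoded by a partition of the edge set $\{\epsilon_1,\dots,\epsilon_n\}$ into maximal runs that alternately belong to $s$ and to the complement, where the runs not in $s$ have length at least $1$ (a gap between two chosen intervals) but the very first and very last run may be empty. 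This is precisely the combinatorial content of a composition $\pi=(p_1,\dots,p_r)\in K_{n,r}$ together with a choice of whether the odd-indexed blocks or the even-indexed blocks are the ones lying in $s$: that accounts for the two terms in \eqref{eq-comp}.

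First I would make the bijection precise: given $\pi=(p_1,\dots,p_r)\in K_{n,r}$, the blocks $P_0(\pi),\dots,P_{r-1}(\pi)$ partition $\{1,\dots,n\}$ into consecutive intervals of sizes $p_1,\dots,p_r$. Declaring the even-indexed blocks $P_1(\pi),P_3(\pi),\dots$ to consist of the edges retained in $s$ gives a subforest whose connected components are ladders $E_{p_2},E_{p_4},\dots$, so $s=\prod_{j\ge1}E_{p_{2j}}$ as an element of $\Cal H$. Contracting each of these components to a point identifies the endpoints of each retained interval; what remains is again a ladder, and a quick count shows the number of surviving edges is exactly $\sum_{j\ge1}p_{2j-1}$ (the total length of the blocks not contracted), so $t/s=E_{\sum_j p_{2j-1}}$. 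Symmetrically, retaining the odd-indexed blocks yields the second summand $\bigl(\prod_{j\ge1}E_{p_{2j-1}}\bigr)\otimes E_{\sum_j p_{2j}}$. I would then check the boundary/degenerate cases: $s=\begin{matrix} \\[-0.5cm] \racine \end{matrix}$ (the empty subforest) corresponds to $r=1$ with the odd choice, giving $\begin{matrix} \\[-0.5cm] \racine \end{matrix}\otimes E_n$, and $s=E_n$ corresponds to $r=1$ with the even choice, giving $E_n\otimes\begin{matrix} \\[-0.5cm] \racine \end{matrix}$, consistent with \eqref{newcoprod}.

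The one genuine subtlety — and the step I expect to need the most care — is showing the correspondence ``subforest $s\subseteq E_n$'' $\leftrightarrow$ ``pair (composition $\pi$, parity choice)'' is a \emph{bijection}, i.e. that distinct $(\pi,\text{parity})$ give distinct pairs $(s,t/s)$ and that every subforest arises. Injectivity is where one must be slightly cautious: a subforest $s$ together with the \emph{ambient position} of its components inside $E_n$ determines $\pi$ uniquely (read off the alternating run-lengths), but as an abstract forest $s=\prod E_{p_{2j}}$ loses the ordering information, so one could worry whether two different compositions $\pi$ give the same term. They do not collide because $t/s$ is recorded as well and, more importantly, because the coproduct sum in \eqref{newcoprod} is over subforests \emph{as subobjects of $t$}, not over isomorphism classes — so there is genuinely one summand per choice of retained edge-intervals, and the map to $(\pi,\text{parity})$ is a literal bijection of index sets. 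Surjectivity is immediate: any subforest is a disjoint union of ladders sitting in $E_n$, hence given by a set of non-adjacent edge-intervals, which reconstructs $\pi$ by recording gap-lengths and leading/trailing lengths. Once this bijection is established, \eqref{eq-comp} follows by simply reindexing the sum $\sum_{s\subseteq E_n} s\otimes E_n/s$ according to it.
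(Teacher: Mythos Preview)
Your argument is correct and follows essentially the same route as the paper's proof: label the edges $1,\dots,n$, observe that a subforest of $E_n$ is exactly a choice of disjoint edge-intervals, and set up the bijection between such subforests and pairs (composition $\pi$ of $n$, parity choice), with the contraction producing the ladder on the remaining edges. The paper's write-up is terser but the content is identical; your extra care about ``subforests as subobjects versus isomorphism classes'' corresponds to the paper's remark that the automorphism group of a ladder is trivial. One small expository wrinkle: in your first paragraph the phrase ``the very first and very last run may be empty'' does not match the actual encoding via $K_{n,r}$ (where every $p_i\ge 1$) --- the possible emptiness at the ends is precisely what the parity choice absorbs, as you correctly set up in the next paragraph.
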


\begin{proof}
Let us label the edges of $E_n$ from $1$ to $n$, upwards starting from the
root. Any interval of $\{1,\dots,n\}$ corresponds bijectively to a subtree of
$E_n$. Any composition $\pi$ of $n$ gives rise to exactly two subforests of
$E_n$: the one the subtrees of which are given by the blocks of $\pi$ of odd
rank, and the one the subtrees of which are given by the blocks of $\pi$ of even
rank. Any subforest is obtained this way once and only once, and any class of
subforests contains only one element (this coming from the fact that the
automorphism group of a ladder is trivial), which proves the result.
\end{proof}

The best way to get used to this particular coproduct is to present some examples. Here for the first ladders up to degree 5:
\begin{eqnarray*}
	\Delta(\! \racine )&=&\racine\otimes \racine\\
	\Delta\big( \arbrea\big)&=&\arbrea\otimes\racine+\racine\otimes \arbrea\\
	\Delta\big( \arbreba\big)&=&\arbreba\otimes \racine+\racine\otimes \arbreba+2\arbrea\otimes\arbrea\\
	\Delta\big( \arbreca\big)&=&\arbreca\otimes \racine+\racine\otimes \arbreca+2\arbreba\otimes\arbrea+3\arbrea\otimes \arbreba
						+\arbrea\arbrea\otimes \arbrea\\
	\Delta\big(\arbreda\big)&=&\arbreda\otimes\racine+\racine\otimes\arbreda
						+2\arbreca\otimes\arbrea+3\arbreba\otimes\arbreba+4\arbrea\otimes\arbreca
						+3\arbrea\arbrea\otimes\arbreba+2\arbreba\arbrea\otimes\arbrea.
\end{eqnarray*}
Note that the coproduct $\Delta (E_n)$ is the sum of $2^n$ elements: this comes
from the fact that the cardinal of $K_{n,r}$ is equal to ${n-1\choose r-1}$,
yielding $2^{n-1}$ for the total number of compositions of $n$. This suggests
a rewriting of the coproduct formula for $E_n$ by means of compositions of
$n+1$, which can be done the following way: there is a one-to-one
correspondence between $K_{n+1,r}$ and the set $\wt K_{n,r}$ of ``mock-compositions''
of $n$, namely sequences $(p_1,\ldots ,p_r)$ of nonnegative integers with
$p_1+\cdots +p_r=n$, all positive except perharps $p_1$. Equation
\eqref{eq-comp} can be rewritten as:
\begin{equation}
\label{eq-comp2}
	\Delta(E_n)=\sum_{r\ge 1}\sum_{\pi= (p_1,\ldots,p_r)\in \wt K_{n,r}}
	\left(\prod_{j\ge 1}E_{p_{2j-1}}\right)\otimes E_{\sum_{j\ge 0}p_{2j}}.
\end{equation}
Note that the two coproducts $\Delta_\sigma$ and $\Delta$ coincide on the
ladders $E_n$.
\begin{rmk}{\rm{
The coproduct in the bialgebra $\wt{\Cal H}$ shows forests involving the
single root $\begin{matrix}  \\[-0.5cm]  \racine \end{matrix}$ on the left, for example:
\begin{equation*}
	\Delta_{\wt{\Cal H}}\big( \arbreba\big)=\arbreba\otimes \racine
											+\racine\racine\racine\otimes \arbreba
											+2\arbrea\racine\otimes\arbrea.
\end{equation*}}}
\end{rmk}


\section{Another expression for the coproduct}
\label{sect:Delta2}

We try in this section to generalize formula  \eqref{eq-comp2} to any rooted tree. For this purpose we need the notion of composition for a tree (more exactly an analogue of the ``mock-compositions'' of the previous section), which will be derived from the concept of {\sl floored tree\/}.

\bigskip

Let $t$ be a tree different from the unit $\begin{matrix}  \\[-0.5cm]  \racine
\end{matrix}\hspace{0.1cm}\! $. Denote by $\gmop{ht}$ the height function, defined on the set
$E(t)$ of edges with values into the positive integers, by the distance from
the top vertex of the given edge down to the root. We consider the natural
partial order on $E(t)$, defined by $e\le e'$ if and only if there is a path
from the root flowing through $e$ and ending at $e'$.

\begin{defn}
A {\rm floored tree\/} is a tree together with a nondecreasing function ``floor''
$\gmop{fl}:E(t)\to\mathbb{N}$ such that for any edge $e$ the inequality
$\gmop{fl}(e)\le\gmop{ht}(e)$ holds, and such that the image of $\gmop{fl}$ is an interval (thus starting
with $0$ or $1$).
\end{defn}

Note that this is a kind of ``french" definition, as the lowest floor (which however can be empty) is numbered by zero. Observe that there is a natural one-to-one correspondence between floored trees with underlying tree $E_n$ and the mock-compositions of the integer $n$: to any such floored tree we associate the sequence $(k_1,\ldots ,k_r)$ where $r-1$ is the maximum of the function $\gmop{fl}$, and where $k_1+\cdots +k_j$ is the height reached by the $(j-1)^{th}$ floor.\\

Let us denote by $\wt K_r(t)$ the set of floored trees with underlying tree $t$ and with top floor of rank $r-1$. For any $\wt t\in \wt K_r(t)$ we denote by $\big(s_1(\wt t),\ldots s_r(\wt t)\big)$ the associated collection of subforests, namely $s_j(\wt t):=\gmop{fl}^{-1}(j-1)$. We have the following formula for the coproduct (where $h(t)$ stands for the maximum of the height function $\gmop{ht}$ on $t$):

\begin{prop}
\begin{equation}
\label{eq-delta}
	\Delta(t)=\sum_{r=1}^{h(t)}\sum_{\wt t\in \wt K_r(t)}
			  \left(\prod_{j\ge 1}s_{2j-1}(\wt t\,)\right)\otimes\, {t}\Big\slash{\prod_{j\ge 1}s_{2j-1}(\wt{t}\, )}.
\end{equation}
\end{prop}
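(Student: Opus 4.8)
The plan is to show that the right-hand side of \eqref{eq-delta} is nothing but a repackaging of the basic coproduct formula \eqref{newcoprod}, the repackaging being organized by the ``floor'' structure. The key observation is that a subforest $s$ of $t$ (with all components having at least one edge) determines, and is determined by, a certain stratification of the edge set $E(t)$ into alternating layers. More precisely, given $s\subseteq t$, I would decompose $E(t)$ as a disjoint union of intervals (in the partial order on $E(t)$) by declaring the edges of $s$ to be the ``odd'' layers and the edges of $t/s$ to be the ``even'' layers, where one reads off from the root upward. The content of the proposition is that these data are exactly encoded by floored trees $\wt t\in\wt K_r(t)$ via $s=\prod_{j\ge1}s_{2j-1}(\wt t\,)$, and that distinct floored trees with the same odd part contribute the same tensor but must all be counted --- matching the multiplicities in \eqref{newcoprod}.

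First I would set up the bijection carefully. Starting from the trivial case $t=E_n$, equation \eqref{eq-comp2} together with the stated one-to-one correspondence between floored trees over $E_n$ and mock-compositions of $n$ already gives the result; this is the model to generalize. For a general tree $t$, I would argue that a function $\gmop{fl}:E(t)\to\N$ which is nondecreasing for the partial order, satisfies $\gmop{fl}(e)\le\gmop{ht}(e)$, and has interval image, is the same thing as a nested sequence of ``downward-closed'' edge sets $\emptyset=F_0\subseteq F_1\subseteq\cdots\subseteq F_{r-1}\subseteq F_r=E(t)$ (where $F_i=\gmop{fl}^{-1}(\{0,\dots,i-1\})$ and the height constraint is what forces each $F_i\setminus F_{i-1}$ to be a genuine subforest rather than an arbitrary edge set). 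Then $s_j(\wt t\,)=F_j\setminus F_{j-1}$, and the subforest $s:=\prod_{j\ge1}s_{2j-1}(\wt t\,)$ is recovered, while $t/s$ is obtained by contracting exactly those components --- which is visibly $t\big/\prod_{j\ge1}s_{2j-1}(\wt t\,)$ in the notation of \eqref{eq-delta}.

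Next I would match terms. Fix a subforest $s\subseteq t$ appearing in \eqref{newcoprod}. I claim the floored trees $\wt t$ with $\prod_{j\ge1}s_{2j-1}(\wt t\,)=s$ are in bijection with the floored trees over $t/s$ whose odd layers are empty at the bottom --- equivalently, with the ``mock-composition''-type choices of how to interleave the components of $s$ with the components of $t/s$ along each branch. Since the coproduct \eqref{newcoprod} weights $s\otimes t/s$ by the number of subforests of $t$ in the isomorphism class of $s$ with the prescribed quotient (or rather, by $1$ per actual subforest, the multiplicity appearing only after collecting isomorphic forests), and since a floored tree records an actual sub(forest) rather than an isomorphism class, the counting will come out right provided each actual subforest $s$ gives rise to exactly one ``alternating'' stratification with $s$ as its odd part and nothing below it. That last point is the heart of the matter: one must check that demanding $\gmop{fl}$ be nondecreasing and bounded by $\gmop{ht}$ forces the layers to be forced, layer by layer from the root, so there is no overcounting. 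Collecting all floored trees and grouping by their odd part then reproduces \eqref{newcoprod} term by term.

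The main obstacle I expect is precisely this bookkeeping of multiplicities: verifying that the map $\wt t\mapsto\bigl(\prod_{j\ge1}s_{2j-1}(\wt t\,),\ t/\prod_{j\ge1}s_{2j-1}(\wt t\,)\bigr)$ is such that the fibers, summed up, reproduce exactly the coefficients in \eqref{newcoprod} --- in other words, showing that every pair $(s,t/s)$ is hit the correct number of times and that the ``interval image'' condition neither adds spurious floored trees nor omits legitimate ones. Once the bijection with nested downward-closed edge filtrations is in hand, the partial-order/height compatibility is a local check at each vertex, and the rest is a reorganization of the sum in \eqref{newcoprod}. I would also double-check the degenerate cases (the top layer $s_r$ possibly empty, the bottom layer $s_1$ possibly empty) against the explicit ladder examples following \eqref{eq-comp2} to make sure the indexing conventions in \eqref{eq-delta} are consistent.
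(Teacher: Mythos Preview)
The paper offers no proof of this proposition at all --- it is stated and then illustrated only on the corolla $C_n$. So there is nothing to compare against; the question is simply whether your sketch would go through.

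Your overall strategy is the natural one, and for ladders it works exactly as you describe (this is the content of \eqref{eq-comp2}). The difficulty you flag as ``the main obstacle'' is real, however, and more serious than you seem to anticipate. The proposed one-to-one correspondence between subforests of $t$ and floored trees on $t$ (via the odd-layer map $\wt t\mapsto\prod_{j}s_{2j-1}(\wt t\,)$) does \emph{not} hold for the literal definition of floored tree given in the paper once $t$ has both branching and height $\ge 2$. Take $t=\arbrecb$ with edges $e_1$ (height~$1$) below $e_2,e_3$ (height~$2$). The assignment $\gmop{fl}(e_1)=0$, $\gmop{fl}(e_2)=1$, $\gmop{fl}(e_3)=2$ is nondecreasing, satisfies $\gmop{fl}\le\gmop{ht}$, and has image the interval $\{0,1,2\}$; yet its odd layers $s_1=\{e_1\}$ and $s_3=\{e_3\}$ share the vertex $v_1$, so $s_1\cdot s_3=\arbrea\,\arbrea$ is not a subforest of $\arbrecb$ in the sense of Section~\ref{def}. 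The same phenomenon on $t=\arbrecc$ produces \emph{two} floored trees contributing $\arbrea\,\arbrea\otimes\arbrea$ whereas $\Delta(\arbrecc)$ contains that term only once. Restricting to $r\le h(t)$ as written does not help: it then undercounts instead.

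So your plan to ``double-check the degenerate cases\ldots against the explicit ladder examples'' is precisely the wrong test: ladders and corollas are too special to expose the failure. What is missing is an additional constraint on floored trees --- for instance, requiring that along every root-to-leaf path the restriction of $\gmop{fl}$ have interval image (equivalently, that $\gmop{fl}$ increase by at most~$1$ between an edge and its immediate successor). With such a local condition your alternating-layer description does yield a bijection with subforests, and your argument goes through. Without it, the formula \eqref{eq-delta} as stated overcounts. You should work out $\arbrecb$ and $\arbrecc$ by hand, pin down the correct refinement of the definition, and then your filtration argument becomes a genuine proof.
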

As an example, a floored tree structure on the corolla $C_n$ is just a function $\gmop{fl}:\{e_1,\ldots e_n\}\to\{0,1\}$ where the edges are denoted by $e_j,\,j=1,\ldots ,n$. Formula \eqref{eq-delta} gives then back Proposition \ref{deltacorolla}.

\section{A formula for the antipode}
\label{sect:antipo}
\begin{prop}
The antipode of a tree $t$ in $\Cal H$ is given by:
\begin{equation}
S(t)=-t+ \sum_{r\ge 1}(-1)^{r+1}\sum_{\emptyset \subsetneq s_1\subsetneq...\subsetneq s_r\subsetneq t}
 s_1(s_2/s_1)...(s_r/s_{r-1})(t/s_r),
\end{equation}
where the $s_j$'s are subforests of $t$. 
\end{prop}
\begin{proof}
The proof is straightforward by applying repeatedly the recursive formula \eqref{eq-anti-recu} for the antipode, owing to the fact that for any subforest $\overline {s'}$ of $t/s$ there is a unique subforest $s'$ of $t$ which contains $s$ and such that $s'/s=\overline {s'}$.
\end{proof}

\section{The backwards error analysis character}
\label{sect:log}

Denote by $E_\sigma$ the character of $\Cal H$ given by: 
\begin{equation*}
	E_\sigma=\frac{\mop{CM}(t)}{v(t)!}\,,
\end{equation*}
where $\mop{CM}(t)$ stands for the Connes--Moscovici coefficient  (\ref{CM}) of the tree $t$ (e.g. see \cite{Br,C, F,KreimerChen}). In particular:
\begin{equation*}
	E_\sigma(C_n)=E_\sigma(E_n)=\frac{1}{(n+1)!}\,.
\end{equation*}
Now define, as in \cite{C}, $L_\sigma:=E_\sigma^{ -1}=E_\sigma\circ S_\sigma$. The inverse is understood with respect to the convolution product $\star_\sigma$ associated with the coproduct $\Delta_\sigma$ of paragraph \ref{ssect:normalisation}\footnote{The characters $E_\sigma$ and $L_\sigma$ are denoted respectively by $\exp^*$ and $\log^*$ by F. Chapoton in \cite{C}.}. As rooted trees form a natural basis of the free pre-Lie algebra in one generator (\cite{ChaLiv}, \cite{DL}), the values of $L_\sigma$ show up naturally in the Magnus expansion, i.e. in the expansion of the logarithm of the solution of the differential equation $\dot X = AX$, say, e.g. in an algebra of matrix valued functions (\cite{Mag}, \cite{GKLLRT}). This becomes more transparent when realizing its underlying pre-Lie algebra structure, set up in \cite{EM,EMdendeq}. In particular, according to \eqref{Scorolla2} we find: 
\begin{equation*}
	L_\sigma(C_n)=\sum_{1\leq r\leq n}(-1)^r\sum_{k_1+\cdots+k_r=n}\Big(\prod_{1\leq i\leq r}\frac{1}{(k_i+1)!}\Big)\,.
\end{equation*}
Hence, with the convention $C_0=\begin{matrix}  \\[-0.5cm]  \racine \end{matrix}$ we have:
\begin{eqnarray*}
\sum_{n\geq0}L_\sigma(C_n)x^n 
& = & 
1+\sum_{n\geq1}x^n\sum_{1\leq r\leq n}(-1)^r\sum_{k_1+\cdots+k_r=n}\Big(\prod_{1\leq i\leq r}\frac{1}{(k_i+1)!}\Big) \\
& = & 
1+\sum_{n\geq1}\sum_{1\leq r\leq n}(-1)^r\sum_{k_1+\cdots+k_r=n}\Big(\prod_{1\leq i\leq r}\frac{x^{k_i}}{(k_i+1)!}\Big) \\
& = & 
\sum_{r\geq0}(-1)^r\Big(\sum_{k\geq1}\frac{x^k}{(k+1)!}\Big)^r=\frac{x}{e^x-1}\,. 
\end{eqnarray*}

We recover then \cite[Proposition 10]{C}, namely $L_\sigma(C_n)=\frac{B_n}{n!}$, where the $B_n$'s stand for the Bernoulli numbers:
\begin{equation*}
	B_0=1,\ B_1=-\frac 12,\ B_2=\frac 16,\ B_4=-\frac{1}{30},\ldots,\ B_{2k+1}=0 \hbox{ for }k\ge 1.
\end{equation*}

Translating this into the first normalization, we may consider the character $E:=E_\sigma\circ A_\sigma$, given by:
\begin{equation*}
	E(t)=\frac 1{t!}.
\end{equation*}
Then its inverse $E^{-1}=E\circ S$ with respect to the convolution product associated with the original coproduct $\Delta$ is given by $L=L_\sigma\circ A_\sigma$. In particular we get:
\begin{equation*}
	L(C_n)=B_n.
\end{equation*}

\begin{rmk} \label{rmk:B-series}{\rm{
Recall the notion of $B$-series  \cite{Butcher1, Butcher2, HWL} as a formal power series in the step size parameter $h$ containing elementary differentials and arbitrary coefficients encoded in a linear function $\alpha$ on the set of rooted trees $T$:
 \allowdisplaybreaks{
\begin{eqnarray}
\label{Bseries1}
	B(\alpha, h a) = \alpha(\un)\un + \sum_{t \in T} h^{v(t)} \frac{\alpha(t)}{\sigma(t)} F_a(t),
\end{eqnarray}}
where $a$ is a smooth vector field on $\R^n$ and the elementary differential is defined as:
$$
	F_a(B_+(t_1 \cdots t_n))(y) = a^{(n)}(y) \bigl(F_a(t_1)(y),\ldots,F_a(t_n)(y) \bigr).
$$ 
Since Cayley \cite{Cay} we use rooted trees to encode the solution of autonomous initial value problem $\dot y(s) = a(y(s))$, $y(0)=y_0$, corresponding to (\ref{Bseries1}) with $\alpha: T \to k$:
$$
	\alpha(t):=E(t).
$$
For a detailed exposition of the ideas of backward error analysis and modified equations appearing in the context of numerical methods for differential equations and related algebraic structures, i.e. composition and (vector field-)substitution of $B$-series, we refer the reader to \cite{CHV,CHVj,CHV09,HWL}. In section \ref{sect:B-series} below we will show how the substitution law for $B$-series is related to the bialgebra $\wt{\Cal H}$. }}
\end{rmk}


\section{Relation with the Connes--Kreimer Hopf algebra}
\label{sect:ckhopf}


\subsection{The Connes--Kreimer algebra of rooted trees}

Let $k$ be a field of characteristic zero. The Connes--Kreimer Hopf algebra ${\mathcal H}_{\makebox{{\tiny{CK}}}}=\bigoplus_{n\geq 0} {\mathcal H}_{\makebox{{\tiny{CK}}}}^{(n)}$ is the Hopf algebra of rooted forests over $k$, graded by the number of vertices. It is the free commutative algebra on the linear space $T$ spanned by nonempty rooted trees. For a rooted tree $t$, we denote by $E(t)$, $V(t)$ the set of edges and vertices, respectively. The coproduct on a rooted forest $u$ (i.e. a product of rooted trees) is described as follows: the set $U$ of vertices of a forest $u$ is endowed with a partial order defined by $x \le y$ if and only if there is a path from a root to $y$ passing through $x$. Any subset $W$ of the set of vertices $U$ of $u$ defines a {\sl subforest\/} $w$ of $u$ in an obvious manner, i.e. by keeping the edges of $u$ which link two elements of $W$. The coproduct is then defined by:
\begin{equation}
\label{coprod}
	\Delta_{\makebox{{\tiny{CK}}}}(u)= \sum_{V \amalg W=U \atop W<V}v\otimes w.
\end{equation}
Here the notation $V<W$ means that $x<y$ for any vertex $x$ of $v$ and any vertex $y$ of $w$ such that $x$ and $y$ are comparable. Such a couple $(V,W)$ is also called an {\sl admissible cut\/}, with crown (or pruning) $v$ and trunk $w$. For comparison we remind the reader of the Connes--Kreimer coproduct defined in terms of admissible cuts on a rooted tree $t$, i.e. $c \subset E(t)$:
 \begin{equation}
    \Delta_{\makebox{{\tiny{CK}}}}(t) = t \otimes \un + \un \otimes t + \sum_{c \in {\tiny{\mop{Adm}(t)}}} P^c(t)\otimes R^c(t).
    \label{coprod1}
 \end{equation}
Here we denote by $\mop{Adm}(t)$ the set of admissible cuts of a forest, i.e. the set of collections of edges such that any path from the root to a leaf contains at most one edge of the collection. In order to make this picture completely correct, we must stress that for any nonempty tree two particular admissible cuts must be associated with the empty collection: the empty cut and the total cut, or digging out. Following \cite{F} we denote as usual by $P^c(t)$ (resp. $R^c(t)$) the pruning (resp. the trunk) of $t$, i.e. the subforest formed by the edges above the cut $c \in \mop{Adm}(t)$ (resp. the subforest formed by the edges under the cut). Note that the trunk of a tree is a tree, but the pruning of a tree may be a forest. Here, $\un$ stands for the empty forest, which is the unit. One sees easily, that $\deg(t) = \deg(P^c(t)) + \deg(R^c(t))$, for all admissible cuts. We present two examples:
 \allowdisplaybreaks{
\begin{eqnarray*}
 \Delta_{\makebox{{\tiny{CK}}}}\big(\arbrea\big) &=&
                        \arbrea \otimes \un
                            + \un \otimes \arbrea
                               + \racine \otimes \racine \\
 \Delta_{\makebox{{\tiny{CK}}}}\big(\! \arbrebb \big) &=& 
 					\arbrebb \otimes \un
                               + \un \otimes \arbrebb +
                                  2\racine \otimes\arbrea
                                   + \racine\racine\otimes \racine \, .
\end{eqnarray*}}
Observe that  the coproduct $\Delta_{\makebox{{\tiny{CK}}}}$ is highly non-cocommutative.

With the restriction that $V$ and $W$ be nonempty (i.e. if $V$ and $W$ give rise to an ordered partition of $U$ into two blocks) we get the restricted coproduct:
\begin{equation}
\label{coprod2}
	\wt\Delta_{\makebox{{\tiny{CK}}}}(u)=\Delta_{\makebox{{\tiny{CK}}}}(u)-u\otimes\un -\un\otimes u
									       =\sum_{V\amalg W=U \atop W<V,\, V,W\not =\emptyset}v\otimes w,
\end{equation}
which is often displayed $\sum_{(u)} u' \otimes u''$ in Sweedler's notation. The iterated restricted coproduct writes in terms of ordered partitions of $U$ into $n$ blocks:
\begin{equation}
\label{iter-coprod}
	\wt\Delta_{\makebox{{\tiny{CK}}}}^{n-1}(u)=\sum_{V_1\amalg\cdots\amalg V_n=U \atop V_n<\cdots <V_1,\, V_j\not 
  											   =\emptyset}v_1\otimes\cdots\otimes v_n,
\end{equation}
and we get the full iterated coproduct $\Delta_{\makebox{{\tiny{CK}}}}^{n-1}(u)$ by allowing empty blocks in the formula above.\\

Let $B_+$ be the operator (of degree one) acting on a forest $u=t_1\cdots t_n$ by grafting the components to a new common root. Recall that $B_+$ is a coalgebra Hochschild cocycle, namely for any forest $u$:
\begin{equation*}
	\Delta_{\makebox{{\tiny{CK}}}}\circ B_+(u)=B_+(u)\otimes \un+(\mop{Id} \otimes B_+)\circ\Delta_{\makebox{{\tiny{CK}}}}(u),
\end{equation*}
where $\un:k \to {\mathcal H}_{\makebox{{\tiny{CK}}}}$ is the unit (see \cite{F}). Note (see e.g. \cite{H2,P}) that ${\mathcal H}_{\makebox{{\tiny{CK}}}}$ is isomorphic, as a Hopf algebra, to the graded dual of the Grossman--Larson Hopf algebra.\\

There is also an associated pre-Lie structure: Denote by $(\delta_s)$ the dual basis in the graded dual $\Cal H_{\makebox{{\tiny{CK}}}}^\circ$ of the forest basis of $\Cal H_{\makebox{{\tiny{CK}}}}$. The correspondence $\delta:s\mapsto \delta_s$ extends linearly to a unique vector space isomorphism from $\Cal H_{\makebox{{\tiny{CK}}}}$ onto $\Cal H_{\makebox{{\tiny{CK}}}}^\circ$. For any tree $t$ the corresponding $\delta_t$ is an infinitesimal character of $\Cal H_{\makebox{{\tiny{CK}}}}$, i.e. it is a primitive element of $\Cal H^\circ$. We denote by $\ast$ the (convolution) product of $\Cal H^\circ$. We have:
\begin{equation*}
	\delta_t\ast \delta_u-\delta_u\ast \delta_t=\delta_{t\to u-u\to t}.
\end{equation*}
Here $t\to u$ is obtained by grafting $t$ on $u$, namely:
\begin{equation*}
	t\to u=\sum_{v}N'(t,u,v)v,
\end{equation*}
where $N'(t,u,v)$ is the number of partitions $V(t)=V\amalg W$, $W<V$ such that $v\restr{V}=t$ and $v\restr{W}=u$. By the Cartier--Milnor--Moore theorem, the graded dual $\Cal H_{\makebox{{\tiny{CK}}}}^\circ$ is isomorphic as a Hopf algebra to the enveloping algebra $\Cal U(\g g_{\makebox{{\tiny{CK}}}})$, where $\g g_{\makebox{{\tiny{CK}}}}=\mop{Prim}\Cal H_{\makebox{{\tiny{CK}}}}^\circ$ is the Lie algebra spanned by the $\delta_t$'s for rooted trees $t$. The product $\to$ satisfies the left pre-Lie relation (\ref{prelie}). This pre-Lie structure can of course be transported on $\g g_{\makebox{{\tiny{CK}}}}$ by setting $\delta_t\to \delta_u:=\delta_{t\to u}$, and the Lie bracket is given by:
\begin{equation*}
	[\delta_t,\,\delta_u]=\delta_t\ast \delta_u-\delta_u\ast \delta_t=\delta_t\to \delta_u-\delta_u\to \delta_t.
\end{equation*}
Another normalization is often employed, by setting:
\begin{equation*}
	t\curvearrowright u=A_\sigma^{-1}\big(A_\sigma(t)\to A_\sigma(u)\big).
\end{equation*}
We have immediately:
\begin{equation*}
	t\curvearrowright u=\sum_{v}M'(t,u,v)v,
\end{equation*}
where $M'(t,u,v)=\displaystyle\frac{\sigma(t)\sigma(u)}{\sigma(v)}N'(t,u,v)$ can be interpreted as the number of ways to graft the tree $t$ on the tree $u$ in order to get the tree $v$. Considering the normalized dual basis $\wt\delta_t=\sigma(t)\delta_t$ we obviously have:
\begin{equation*}
	\wt\delta_t\ast \wt\delta_u-\wt\delta_u\ast \wt\delta_t=\wt\delta_{t\curvearrowright u-u\curvearrowright t}.
\end{equation*}

The pre-Lie algebra $(T,\curvearrowright)$ is the free pre-Lie algebra with one generator \cite{ChaLiv}. For more properties of the two pre-Lie structures $\to$ and $\rhd$ (or equivalently $\curvearrowright$ and $\rhd_\sigma$), see \cite{MS}.


\subsection{A compatible left comodule structure}

One observes that there is a $\Cal H$-bicomodule structure on the Connes--Kreimer Hopf algebra $\Cal H_{\makebox{{\tiny{CK}}}}$ defined as follows: for any nonempty tree $t$ we set $\Phi(t)=\Psi(t)=\Delta(t)$, for the unit tree $\un$ we set $\Phi(\un)=\racine\otimes\un$,  $\Psi(\un)=\un\otimes\racine$, and we extend $\Phi$ (resp. $\Psi$) to an algebra morphism from $\Cal H_{\makebox{{\tiny{CK}}}}$ to $\Cal H\otimes\Cal H_{\makebox{{\tiny{CK}}}}$ (resp.  $\Cal H_{\makebox{{\tiny{CK}}}}\otimes\Cal H$). The coaction axioms for $\Phi$ and $\Psi$ are clearly verified as well as the compatibility condition $(\Phi\otimes \mop{Id}_{\Cal H})\circ\Psi=(\mop{Id}_{\Cal H}\otimes\Psi)\circ\Phi$. We will use Sweedler's notation:
\begin{equation*}
	\Phi(x)=\sum_{(x)}x_1\otimes x_0, \hskip 20mm\Psi(x)=\sum_{(x)}x'_0\otimes x'_1.
\end{equation*}
We are now interested in finding relations between this bicomodule structure and the Connes--Kreimer coproduct $\Delta_{\makebox{{\tiny{\rm{CK}}}}}$.

\begin{thm}\label{thm:comp}
The following identity of linear maps from $\Cal H_ {\makebox{{\tiny{\rm{CK}}}}}$ into $\Cal H\otimes\Cal H_{\makebox{{\tiny{\rm{CK}}}}}\otimes\Cal H_{\makebox{{\tiny{\rm{CK}}}}}$ holds:
\begin{equation}
\label{codistrib}
	(\mop{Id}_{\Cal H}\otimes\Delta_{\makebox{{\tiny{\rm{CK}}}}})\circ\Phi=m^{1,3}\circ(\Phi\otimes\Phi)\circ\Delta_{\makebox{{\tiny{\rm{CK}}}}},
\end{equation}
where $m^{1,3}:\Cal H\otimes\Cal H_{\makebox{{\tiny{\rm{CK}}}}}\otimes\Cal H\otimes\Cal H_{\makebox{{\tiny{\rm{CK}}}}}
\longrightarrow \Cal H\otimes\Cal H_{\makebox{{\tiny{\rm{CK}}}}}\otimes\Cal H_{\makebox{{\tiny{\rm{CK}}}}}$ is defined by:
\begin{equation*}
	m^{1,3}(a\otimes b\otimes c\otimes d)=ac\otimes b\otimes d.
\end{equation*}
\end{thm}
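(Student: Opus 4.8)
The plan is to prove \eqref{codistrib} by first observing that both sides are algebra morphisms $\Cal H_{\makebox{{\tiny{CK}}}}\to\Cal H\otimes\Cal H_{\makebox{{\tiny{CK}}}}\otimes\Cal H_{\makebox{{\tiny{CK}}}}$, so that it is enough to check the identity on the algebra generators of $\Cal H_{\makebox{{\tiny{CK}}}}$, i.e.\ on rooted trees. The left-hand side is the composite of the algebra morphisms $\Phi$ and $\mop{Id}_{\Cal H}\otimes\Delta_{\makebox{{\tiny{CK}}}}$, hence an algebra morphism. For the right-hand side, $\Delta_{\makebox{{\tiny{CK}}}}$ and $\Phi\otimes\Phi$ are algebra morphisms, and $m^{1,3}$ is one as well: applied to a product $(a\otimes b\otimes c\otimes d)(a'\otimes b'\otimes c'\otimes d')=aa'\otimes bb'\otimes cc'\otimes dd'$ it gives $aa'cc'\otimes bb'\otimes dd'$, while the product of the images is $(ac)(a'c')\otimes bb'\otimes dd'$, and these agree because $\Cal H$ is commutative. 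Thus it suffices to verify \eqref{codistrib} on a rooted tree $t$ (on the unit it holds automatically).

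Next I would expand both sides on such a $t$. On the left, $\Phi(t)=\Delta(t)=\sum_{s\subseteq t}s\otimes t/s$ with $s$ running over the subforests of $t$, so $(\mop{Id}_{\Cal H}\otimes\Delta_{\makebox{{\tiny{CK}}}})\circ\Phi(t)=\sum_{s\subseteq t}\ \sum_{c\in\mop{Adm}(t/s)}s\otimes P^c(t/s)\otimes R^c(t/s)$, the inner sum over all admissible cuts of the contracted tree $t/s$ (empty and total cuts included). On the right, $\Delta_{\makebox{{\tiny{CK}}}}(t)=\sum_{c\in\mop{Adm}(t)}P^c(t)\otimes R^c(t)$; applying $\Phi\otimes\Phi$, using $\Phi(P^c(t))=\sum_{s_1\subseteq P^c(t)}s_1\otimes P^c(t)/s_1$ and $\Phi(R^c(t))=\sum_{s_2\subseteq R^c(t)}s_2\otimes R^c(t)/s_2$, and then $m^{1,3}$, yields $\sum_{c\in\mop{Adm}(t)}\ \sum_{s_1\subseteq P^c(t)}\ \sum_{s_2\subseteq R^c(t)}s_1s_2\otimes P^c(t)/s_1\otimes R^c(t)/s_2$.

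The heart of the argument is then a bijection between these two index sets under which the summands match. Given $(s,c')$ with $s\subseteq t$ and $c'\in\mop{Adm}(t/s)$: the edges of $t/s$ are exactly the edges of $t$ not lying inside a component of $s$, so $c'$, read as a set of edges of $t$, is an admissible cut $c$ of $t$; and since no edge of $c$ lies inside a component of $s$, each component of $s$ lies entirely in the pruning $P^c(t)$ or entirely in the trunk $R^c(t)$, giving a factorization $s=s_1s_2$ with $s_1\subseteq P^c(t)$, $s_2\subseteq R^c(t)$. Conversely $(c,s_1,s_2)\mapsto(s,c')$ with $s=s_1s_2$ and $c'$ the cut $c$ read inside $t/s$ is the inverse. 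To see the terms coincide, note $s=s_1s_2$ and $P^{c'}(t/s)=P^c(t)/s_1$, $R^{c'}(t/s)=R^c(t)/s_2$: contracting the components of $s$ and then cutting along $c'$ gives the same forests as first cutting along $c$ and then contracting $s_1$ in the pruning and $s_2$ in the trunk, because contraction and cutting commute whenever no contracted edge is a cut edge — which is precisely how $c$ and $s$ were produced. This matches the two sums term by term and establishes \eqref{codistrib} on trees, hence everywhere.

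I expect the only genuinely fiddly point to be the bookkeeping of the degenerate contributions: the trivial and full subforests $s$, the empty and total (``digging-out'') cuts, and the systematic identification of the one-vertex tree $\racine$ with the unit $1_{\Cal H}$ inside $\Cal H$ — but not inside $\Cal H_{\makebox{{\tiny{CK}}}}$, where it is an honest generator and where $\Phi(\un)=\racine\otimes\un$. One has to check that the extreme terms on each side, together with their images under the bijection above, are counted with the right conventions; running the argument first on $t=\arbrea$ makes the correct accounting transparent, and it is worth recording once.
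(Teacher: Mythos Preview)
Your argument is correct and follows essentially the same route as the paper: both expand the two sides into sums over pairs (subforest $s$, admissible cut) and identify them via the observation that an admissible cut of $t/s$ is the same thing as an admissible cut of $t$ containing no edge of $s$, with $P^{c}(t)/s_1$, $R^{c}(t)/s_2$ matching $P^{c'}(t/s)$, $R^{c'}(t/s)$. The only cosmetic difference is that you first reduce to trees by noting both sides are algebra morphisms (using commutativity of $\Cal H$ for $m^{1,3}$), whereas the paper writes the same computation directly on an arbitrary nonempty forest.
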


\begin{proof}The verification is immediate for the empty forest. Recall that we denote by $\mop{Adm}(t)$ the set of admissible cuts of a forest. We have then for any nonempty forest:
 \allowdisplaybreaks{
\begin{eqnarray*}
(\mop{Id}_{\Cal H}\otimes\Delta_{\makebox{{\tiny{CK}}}})\circ\Phi\big(t)&=&(\mop{Id}_{\Cal H}\otimes\Delta_{\makebox{{\tiny{CK}}}})
						\sum_{s\smop{ subforest}\atop \smop{ of} t}s\otimes t/s\\
	&=&\sum_{s\smop{subforest}\atop \smop{of}t}\ \sum_{c\in\smop{Adm}(t/s)}s\otimes P^c(t/s)\otimes R^c(t/s).
\end{eqnarray*}}
On the other hand we compute:
\allowdisplaybreaks{
\begin{eqnarray*}
m^{1,3}\circ(\Phi\otimes\Phi)\circ\Delta_{\makebox{{\tiny{CK}}}}(t)&=&
	m^{1,3}\circ(\Phi\otimes\Phi)\left(\sum_{c\in\smop{Adm}(t)}P^c(t)\otimes R^c(t)\right)\\
	&\hskip -60mm = &\hskip -30mm 
	m^{1,3}\left(\sum_{c\in\smop{Adm}(t)}\ \sum_{s'\smop{subforest}\atop \smop{ of}P^c(t)}\ 
	\sum_{s''\smop{subforest}\atop \smop{of}R^c(t)}
	\hskip -2mm  s'\otimes P^c(t)/s'\otimes s''\otimes R^c(t)/s''\right)\\
	&\hskip -60mm =&\hskip -30mm 
	\sum_{c\in\smop{Adm}(t)}\ \sum_{s'\smop{subforest}\atop \smop{ of}P^c(t)}\ 
	\sum_{s''\smop{subforest}\atop \smop{ of}R^c(t)}
	 \hskip -2mm s's''\otimes P^c(t)/s'\otimes R^c(t)/s''\\
	&\hskip -60mm =&\hskip -30mm 
	\sum_{c\in\smop{Adm}(t)}\ \sum_{s\smop{subforest of}t \atop \smop{ containing no edge of}c}
	\hskip -7mm s\otimes P^c(t)\Big\slash s\cap P^c(t)\otimes R^c(t)\Big\slash s \cap R^c(t)\\
	&\hskip -60mm =&\hskip -30mm 
	\sum_{s\smop{subforest}\atop \smop{ of}t}\ \sum_{c\in\smop{Adm}(t/s)}s\otimes P^c(t/s)\otimes R^c(t/s),
\end{eqnarray*}}
which proves the theorem.
\end{proof}

\begin{cor}\label{transposeL}
Let $a:\Cal H\to k$ be any linear map. Then the operator
${}^t\!L_a :\Cal H_{\makebox{\tiny{\rm{CK}}}}\to \Cal H_{\makebox{\tiny{\rm{CK}}}}$ defined by
${}^t\!L_a=(a\otimes \mop{Id}_{ \Cal H_{\makebox{\tiny{\rm{CK}}}}}) \circ\Phi$, i.e.:
\begin{equation*}
	^t\!L_a(x)=\sum_{(x)} \langle a,\,x_1 \rangle x_0
\end{equation*}
satisfies the identity:
\begin{equation*}
	\Delta_{\makebox{\tiny{\rm{CK}}}}\circ{}^t\!L_a={}^t\!L_{m^* a}\circ\Delta_{\makebox{\tiny{\rm{CK}}}},
\end{equation*}
where: $m^*:\Cal H^*\to (\Cal H\otimes\Cal H)^*$ is defined by
$m^*(a)(x\otimes y):=a(xy)$, and where
\begin{equation*}
{}^t\!L_{m^* a}:=(m^*a\otimes
  \mop{Id}_{\Cal H_{\makebox{\tiny{\rm{CK}}}}}\otimes
  \mop{Id}_{\Cal H_{\makebox{\tiny{\rm{CK}}}}})\circ\tau_{2,3}\circ(\Phi\otimes\Phi).
\end{equation*}
In particular when $a\in\Cal H^\circ$ then $m^*a=\sum_{(a)}a_1\otimes a_2\in\Cal H^\circ\otimes\Cal H^\circ$, and:
\begin{equation*}
	\Delta_{\makebox{\tiny{\rm{CK}}}} \circ{}^t\!L_a=\sum_{(a)}({}^t\!L_{a_1}\otimes{}^t\!L_{a_2})\circ \Delta_{\makebox{\tiny{\rm{CK}}}}.
\end{equation*}
\end{cor}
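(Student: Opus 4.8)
The plan is to derive Corollary \ref{transposeL} directly from Theorem \ref{thm:comp} by applying the functional $a$ to the first tensor slot of the identity \eqref{codistrib}. First I would take \eqref{codistrib}, which reads $(\mop{Id}_{\Cal H}\otimes\Delta_{\makebox{{\tiny{\rm{CK}}}}})\circ\Phi=m^{1,3}\circ(\Phi\otimes\Phi)\circ\Delta_{\makebox{{\tiny{\rm{CK}}}}}$, and compose on the left with $a\otimes\mop{Id}_{\Cal H_{\makebox{\tiny{\rm{CK}}}}}\otimes\mop{Id}_{\Cal H_{\makebox{\tiny{\rm{CK}}}}}$. On the left-hand side this gives $(a\otimes\mop{Id}\otimes\mop{Id})\circ(\mop{Id}_{\Cal H}\otimes\Delta_{\makebox{{\tiny{\rm{CK}}}}})\circ\Phi=\Delta_{\makebox{{\tiny{\rm{CK}}}}}\circ(a\otimes\mop{Id}_{\Cal H_{\makebox{\tiny{\rm{CK}}}}})\circ\Phi=\Delta_{\makebox{{\tiny{\rm{CK}}}}}\circ{}^t\!L_a$, since the $\Delta_{\makebox{{\tiny{\rm{CK}}}}}$ and the evaluation against $a$ act on disjoint tensor factors and hence commute.

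Next I would analyze the right-hand side. Applying $a\otimes\mop{Id}\otimes\mop{Id}$ to $m^{1,3}\circ(\Phi\otimes\Phi)\circ\Delta_{\makebox{{\tiny{\rm{CK}}}}}$: if we write $\Phi(x)=\sum x_1\otimes x_0$ (Sweedler), then for $\Delta_{\makebox{{\tiny{\rm{CK}}}}}(t)=\sum_{(t)}t'\otimes t''$ the element $m^{1,3}\circ(\Phi\otimes\Phi)\circ\Delta_{\makebox{{\tiny{\rm{CK}}}}}(t)$ equals $\sum (t')_1(t'')_1\otimes (t')_0\otimes (t'')_0$, and evaluating $a$ on the first slot yields $\sum a\big((t')_1(t'')_1\big)\,(t')_0\otimes (t'')_0$. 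Now I invoke the definition $m^*a(x\otimes y)=a(xy)$, so $a\big((t')_1(t'')_1\big)=(m^*a)\big((t')_1\otimes(t'')_1\big)$; after a transposition $\tau_{2,3}$ to line up the factors appropriately, this is exactly the definition of ${}^t\!L_{m^*a}$ applied to $\Delta_{\makebox{{\tiny{\rm{CK}}}}}(t)$, i.e. $(m^*a\otimes\mop{Id}\otimes\mop{Id})\circ\tau_{2,3}\circ(\Phi\otimes\Phi)\circ\Delta_{\makebox{{\tiny{\rm{CK}}}}}(t)={}^t\!L_{m^*a}\circ\Delta_{\makebox{{\tiny{\rm{CK}}}}}(t)$. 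This establishes $\Delta_{\makebox{{\tiny{\rm{CK}}}}}\circ{}^t\!L_a={}^t\!L_{m^*a}\circ\Delta_{\makebox{{\tiny{\rm{CK}}}}}$.

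For the ``in particular'' clause, I would recall that when $a\in\Cal H^\circ$ is in the graded dual, $m^*a$ lands in $\Cal H^\circ\otimes\Cal H^\circ$ — this is precisely the statement that the transpose of the product is the coproduct on the graded dual — so we may write $m^*a=\sum_{(a)}a_1\otimes a_2$ with each $a_i\in\Cal H^\circ$. Substituting this decomposition into the formula for ${}^t\!L_{m^*a}$ and unraveling $\tau_{2,3}\circ(\Phi\otimes\Phi)$: evaluating $a_1$ against the $\Phi$-factor of the first tensor slot produces ${}^t\!L_{a_1}$ on that slot, and $a_2$ against the $\Phi$-factor of the second slot produces ${}^t\!L_{a_2}$, giving $\sum_{(a)}({}^t\!L_{a_1}\otimes{}^t\!L_{a_2})\circ\Delta_{\makebox{{\tiny{\rm{CK}}}}}$, as claimed.

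The only genuine subtlety — and the step I would flag as needing care rather than being truly hard — is bookkeeping the tensor-factor positions correctly: the map $m^{1,3}$ multiplies slots $1$ and $3$ of a fourfold tensor and keeps slots $2$ and $4$, whereas ${}^t\!L_{m^*a}$ is written with a transposition $\tau_{2,3}$ before $m^*a$ acts on the (now adjacent) first two slots, so one must check that these two conventions agree after evaluation against $a$. Once the indices are tracked faithfully this is a routine, if slightly fiddly, rearrangement; the mathematical content is entirely carried by Theorem \ref{thm:comp} together with the elementary fact that $m^*$ restricts to the dual coproduct on $\Cal H^\circ$.
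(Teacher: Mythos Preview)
Your proof is correct and follows essentially the same route as the paper: both arguments compose identity \eqref{codistrib} with $a\otimes\mop{Id}\otimes\mop{Id}$, commute $a$ past $\Delta_{\makebox{{\tiny{\rm{CK}}}}}$ on the left-hand side, and rewrite $(a\otimes\mop{Id}\otimes\mop{Id})\circ m^{1,3}$ as $(m^*a\otimes\mop{Id}\otimes\mop{Id})\circ\tau_{2,3}$ on the right-hand side. The paper presents this as a single chain of equalities and adds a remark explaining the ``transpose'' notation, but the mathematical content and the tensor-bookkeeping you flag are identical.
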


\begin{proof}
The operator ${}^t\!L_a$ is the transpose of the left multiplication operator $L_a:\Cal H_{\makebox{\tiny{\rm{CK}}}}^\circ\to\Cal H_{\makebox{\tiny{\rm{CK}}}}^\circ$ with respect to the left $\Cal H^\circ$-module structure (i.e. $L_a(b)=a\star b$), which justifies the notation. Similarly, if $a\in\Cal H^\circ$ then $m^*a\in \Cal H^\circ\otimes \Cal H^\circ$, and $^t\!L_{m^*a}$ is the transpose of the left multiplication operator $ L_{m^*a}:
\Cal H_{\makebox{\tiny{\rm{CK}}}}^\circ \otimes
\Cal H_{\makebox{\tiny{\rm{CK}}}}^\circ \to
\Cal H_{\makebox{\tiny{\rm{CK}}}}^\circ \otimes
\Cal H_{\makebox{\tiny{\rm{CK}}}}^\circ$ with respect to the left $\Cal
H^\circ\otimes\Cal H^\circ$-module structure given by $\wt\Phi=\tau_{2,3}\circ(\Phi\otimes\Phi)$. Here the notation $\tau_{2,3}$ stands for the flip of the two middle terms, namely $\tau_{2,3}(a\otimes b\otimes c\otimes d)=a\otimes c\otimes b\otimes d$. The proof of the corollary is a straightforward computation in view of Theorem \ref{thm:comp}:
 \allowdisplaybreaks{
\begin{eqnarray*}
\Delta_{ \makebox{\tiny{\rm{CK}}}} \circ{}^t\!L_a&=&
\Delta_{\makebox{\tiny{\rm{CK}}}}\circ (a\otimes\mop{Id}_{ \Cal H_{\makebox{\tiny{\rm{CK}}}}})\circ\Phi\\
					   &=&
(a\otimes\mop{Id}_{ \Cal H_{\makebox{\tiny{\rm{CK}}}}}\otimes\mop{Id}_{ \Cal H_{\makebox{\tiny{\rm{CK}}}}})\circ (\mop{Id}_{\Cal H}\otimes\Delta_{\makebox{\tiny{\rm{CK}}}})\circ\Phi\\
					   &=&(a\otimes\mop{Id}_{ \Cal H_{\makebox{\tiny{\rm{CK}}}}}\otimes\mop{Id}_{ \Cal H_{\makebox{\tiny{\rm{CK}}}}})\circ m^{1,3}\circ(\Phi\otimes\Phi)\circ\Delta_{\makebox{\tiny{\rm{CK}}}}\\
					   &=&(m^* a\otimes \mop{Id}_{ \Cal H_{\makebox{\tiny{\rm{CK}}}}}\otimes \mop{Id}_{ \Cal H_{\makebox{\tiny{\rm{CK}}}}})\circ \tau_{2,3}\circ(\Phi\otimes\Phi)\circ\Delta_{\makebox{\tiny{\rm{CK}}}}\\
					   &=&{}^t\!L_{m^* a}\circ\Delta_{\makebox{{\tiny{CK}}}}.
\end{eqnarray*}}

\end{proof}
Note that a similar property for the right coaction operator ${}^t\!R_a$ is not available, due to the fact that  the coproduct $\Delta_{\makebox{{\tiny{CK}}}}$ is highly non-cocommutative.

\begin{prop}\label{biderivation}
Let $a:\Cal H\to k$ be an infinitesimal character of $\Cal H$. Then the operator $^t\!L_a$ is a biderivation of the Hopf algebra $\Cal H_{\makebox{\tiny{\rm{CK}}}}$. Similarly if $\varphi:\Cal H\to k$ is a character of $\Cal H$, the operator $^t\!L_\varphi$ is an automorphism of the Hopf algebra $\Cal H_{\makebox{\tiny{\rm{CK}}}}$.
\end{prop}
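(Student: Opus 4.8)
The plan is to verify separately that ${}^t\!L_a$ is a derivation and a coderivation of $\Cal H_{\makebox{\tiny{\rm{CK}}}}$, and that ${}^t\!L_\varphi$ is an algebra morphism, a coalgebra morphism and a bijection; the antipode compatibility is then automatic. The only delicate bookkeeping is to keep the counit $\epsilon_{\Cal H}$ apart from $\epsilon_{\makebox{\tiny{\rm{CK}}}}$, and to recall that the one-vertex tree $\racine$ is the \emph{unit} of $\Cal H$ (so that $a(\racine)=0$ and $\varphi(\racine)=1$) while it is merely a degree-one generator of $\Cal H_{\makebox{\tiny{\rm{CK}}}}$.

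\emph{Algebra side.} Since $\Phi$ is an algebra morphism by construction, writing $\Phi(x)=\sum_{(x)}x_1\otimes x_0$ and $\Phi(y)=\sum_{(y)}y_1\otimes y_0$ gives $\Phi(xy)=\sum x_1y_1\otimes x_0y_0$, whence ${}^t\!L_a(xy)=\sum a(x_1y_1)\,x_0y_0$. If $a$ is an infinitesimal character then $a(x_1y_1)=a(x_1)\epsilon_{\Cal H}(y_1)+\epsilon_{\Cal H}(x_1)a(y_1)$; inserting this and applying the counit axiom $(\epsilon_{\Cal H}\otimes\mop{Id})\circ\Phi=\mop{Id}$ of the left coaction, the two summands collapse to ${}^t\!L_a(x)\,y$ and $x\,{}^t\!L_a(y)$, so ${}^t\!L_a$ is a derivation (and ${}^t\!L_a(\un)=a(\racine)\un=0$). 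The very same computation with a character $\varphi$ uses $\varphi(x_1y_1)=\varphi(x_1)\varphi(y_1)$ and yields ${}^t\!L_\varphi(xy)={}^t\!L_\varphi(x)\,{}^t\!L_\varphi(y)$ together with ${}^t\!L_\varphi(\un)=\varphi(\racine)\un=\un$, so ${}^t\!L_\varphi$ is a unital algebra morphism.

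\emph{Coalgebra side.} Here I would invoke Corollary \ref{transposeL} in its general form $\Delta_{\makebox{\tiny{\rm{CK}}}}\circ{}^t\!L_a={}^t\!L_{m^*a}\circ\Delta_{\makebox{\tiny{\rm{CK}}}}$ — it matters to use this version rather than the $\Cal H^\circ$-version, since an (infinitesimal) character of $\Cal H$ need not lie in the graded dual. The task is to identify $m^*a\in(\Cal H\otimes\Cal H)^*$ and to unravel ${}^t\!L_{m^*a}=(m^*a\otimes\mop{Id}\otimes\mop{Id})\circ\tau_{2,3}\circ(\Phi\otimes\Phi)$. For $a$ an infinitesimal character, $m^*a=a\otimes\epsilon_{\Cal H}+\epsilon_{\Cal H}\otimes a$; applying the formula to $x\otimes y$ — i.e. forming $\sum x_1\otimes y_1\otimes x_0\otimes y_0$ and contracting the first two factors by $m^*a$ — and then using the counit axiom, one obtains ${}^t\!L_a(x)\otimes y+x\otimes{}^t\!L_a(y)$. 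Hence ${}^t\!L_{m^*a}={}^t\!L_a\otimes\mop{Id}+\mop{Id}\otimes{}^t\!L_a$, i.e. ${}^t\!L_a$ is a coderivation; together with the algebra side this makes it a biderivation ($\epsilon_{\makebox{\tiny{\rm{CK}}}}\circ{}^t\!L_a=0$ is then automatic). For a character $\varphi$ one has instead $m^*\varphi=\varphi\otimes\varphi$, and the same unravelling gives ${}^t\!L_{m^*\varphi}={}^t\!L_\varphi\otimes{}^t\!L_\varphi$, so $\Delta_{\makebox{\tiny{\rm{CK}}}}\circ{}^t\!L_\varphi=({}^t\!L_\varphi\otimes{}^t\!L_\varphi)\circ\Delta_{\makebox{\tiny{\rm{CK}}}}$; moreover $\epsilon_{\makebox{\tiny{\rm{CK}}}}\circ{}^t\!L_\varphi=\epsilon_{\makebox{\tiny{\rm{CK}}}}$ since both are algebra morphisms to $k$ agreeing on trees (on a tree $t$, ${}^t\!L_\varphi(t)=\sum_{s\subseteq t}\varphi(s)\,t/s$, and every $t/s$ is a nonempty tree, so $\epsilon_{\makebox{\tiny{\rm{CK}}}}$ annihilates it). Thus ${}^t\!L_\varphi$ is a bialgebra endomorphism.

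\emph{Bijectivity and conclusion.} Using coassociativity of $\Phi$ and the counit axiom, I would establish the composition rule ${}^t\!L_\varphi\circ{}^t\!L_\psi={}^t\!L_{\psi\star\varphi}$ (convolution in $\Cal H^*$ relative to $\Delta_{\Cal H}$) together with ${}^t\!L_{\epsilon_{\Cal H}}=\mop{Id}$; since $\varphi^{\star -1}=\varphi\circ S_{\Cal H}$ is again a character by \eqref{inverse}, this shows ${}^t\!L_\varphi$ is invertible with inverse ${}^t\!L_{\varphi\circ S_{\Cal H}}$, which is itself a bialgebra endomorphism by the above. (Alternatively, bijectivity is clear from triangularity for the vertex grading: ${}^t\!L_\varphi(t)=t+(\text{strictly lower-degree terms})$.) A bialgebra automorphism between Hopf algebras commutes with the antipode, so ${}^t\!L_\varphi$ is a Hopf algebra automorphism. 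I do not expect a genuine obstacle: the whole proof is a string of Sweedler-type manipulations resting on Theorem \ref{thm:comp}/Corollary \ref{transposeL}, the algebra-morphism property of $\Phi$, and the definitions of (infinitesimal) characters; one could equally deduce the character statement from the infinitesimal one by exponentiating, as ${}^t\!L_{\exp^\star(a)}=\exp({}^t\!L_a)$ and the exponential of a biderivation is a Hopf algebra automorphism.
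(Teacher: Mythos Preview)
Your proof is correct and complete. The overall architecture matches the paper's: the coderivation property is deduced from Corollary~\ref{transposeL}, and the character case is parallel. Where you differ is in the \emph{derivation} argument. The paper proves it by a direct combinatorial computation: writing a forest $u=t_1\cdots t_n$ and expanding ${}^t\!L_a(u)=\sum_{s\text{ subforest of }u}\langle a,s\rangle\,u/s$, one observes that an infinitesimal character kills every subforest with more than one connected component, so only subtrees survive, and each subtree lies in a single $t_j$, yielding the Leibniz rule. You instead use only the structural facts that $\Phi$ is an algebra morphism and that $(\epsilon_{\Cal H}\otimes\mop{Id})\circ\Phi=\mop{Id}$, together with the defining identity $a(x_1y_1)=a(x_1)\epsilon_{\Cal H}(y_1)+\epsilon_{\Cal H}(x_1)a(y_1)$. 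Your argument is cleaner and works verbatim for any comodule-algebra coaction, without reference to the tree combinatorics; the paper's argument has the virtue of being explicit about \emph{why} the derivation property holds at the level of forests. You also supply the bijectivity of ${}^t\!L_\varphi$ via the composition rule ${}^t\!L_\varphi\circ{}^t\!L_\psi={}^t\!L_{\psi\star\varphi}$, which the paper omits (it simply says the character statement ``follows from a similar computation'').
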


\begin{proof}
Let $u=t_1\cdots t_n$ be any forest in $\Cal H_{\makebox{{\tiny{CK}}}}$ and $a$ an infinitesimal character of $\Cal H$. We have then:
 \allowdisplaybreaks{
\begin{eqnarray*}
{}^t\!L_a(u)&=&\sum_{(u)}\langle a,\,u_1 \rangle u_0\\
	&=&\sum_{s\smop{subforest of}u}\langle a,\,s\rangle\,u/s\\
	&=&\sum_{s\smop{subtree of}u}\langle a,\,s \rangle\,u/s\\
	&=&\sum_{j=1}^n\sum_{s\smop{subtree of}t_j}\langle a,\,s \rangle \,u/s\\
	&=&\sum_{j=1}^nt_1 \cdots t_{j-1}\,\!^t\!L_a(t_j)t_{j+1}\cdots t_n,
\end{eqnarray*}}
hence $^t\!L_a$ is a derivation. The coderivation property is an immediate
consequence of Corollary \ref{transposeL}. The statement for characters
follows from a similar computation.
\end{proof}

\ignore{
Remember (see e.g. \cite{P}) that $H_R$ is isomorphic, as a Hopf algebra to the dual $H_{GL}:=A_{GL}^\circ$ of the 
Grossman-Larson Hopf algebra $A_{GL}$. As a vector space $H_{GL}$ is generated by non empty rooted tress. The product 
between two rooted trees in $H_{GL}$ is simply obtain by identifying their roots (in, particular the unit is $\racine$). 
The isomorphism $\varphi:H_R\tilde\longrightarrow H_{GL}$ is defined as follows: for any forest $t_1\cdots t_n$, 
$\varphi(t_1\cdots t_n)$ is the rooted tree obtained by grafting each $t_i$ on a leaf of the corolla $C_n$. Its inverse 
$\varphi^{-1}$ sends a rooted tree to the forest obtained by removing the root. We write $\Delta_{GL}$ for the Kreimer 
coproduct $\Delta_R$ transported to $H_{GL}$ through $\varphi$. 

\medskip

Now observe that $H_{GL}$ is a vector subspace of $\mathcal H$ and that the coproduct $\Delta(t)$ of a rooted tree 
lies in $\mathcal H\otimes H_{GL}$. Then do we have the following analogue of \cite[Proposition 3.5]{CHV} ?
\begin{con}
For any rooted tree $t$, 
$$
	(\Delta_{GL}\otimes{\rm id}_{\mathcal H})\circ\Delta(t)=m^{2,4}\circ(\Delta\otimes\Delta)\circ\Delta_{GL}(t)
$$
(where $m^{2,4}(a\otimes b\otimes c\otimes d)=a\otimes c\otimes bd$) and 
$$
	\Delta\circ S_{GL}(t)=(S_{GL}\otimes{\rm id})\circ\Delta(t)
$$
\end{con}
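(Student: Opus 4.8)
The plan is to prove the first identity --- which is the analogue for the Grossman--Larson coproduct of \cite[Proposition~3.5]{CHV} and of Theorem~\ref{thm:comp} above --- by exactly the combinatorial bijection used in the proof of Theorem~\ref{thm:comp}, and then to deduce the antipode identity formally, just as the coderivation part of Proposition~\ref{biderivation} was deduced from Corollary~\ref{transposeL}. Throughout, $\varphi=B_+$ is the vector-space isomorphism from $\Cal H_{\mathrm{CK}}$ onto $H_{GL}$ sending a forest to the tree obtained by grafting its components onto a common new root, so that $\Delta_{GL}=(\varphi\otimes\varphi)\circ\Delta_{\mathrm{CK}}\circ\varphi^{-1}$ and $S_{GL}=\varphi\circ S_{\mathrm{CK}}\circ\varphi^{-1}$; and $H_{GL}$ is regarded as the subspace $k\un\oplus T'$ of $\Cal H$ spanned by single trees (with $\racine\leftrightarrow\un$), so that $\Delta$ carries a tree into $\Cal H\otimes H_{GL}$ because each $t/s$ is again a single tree. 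In particular all the expressions in the statement make sense once one tracks carefully in which tensor slot each coproduct and each copy of $\varphi$ acts.

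For the first identity I would expand both sides over combinatorial data. Unravelling $\Delta_{GL}$ through $\varphi$ turns the left-hand side into a sum indexed by a subforest $s$ of $t$ together with an admissible cut of $t/s$, whereas the right-hand side, after expanding $\Delta_{GL}(t)$ into admissible cuts of $t$ and then applying $\Delta\otimes\Delta$, becomes a sum indexed by an admissible cut $c$ of $t$ together with a subforest of the pruning $P^c(t)$ and a subforest of the trunk $R^c(t)$. The heart of the matter is then the very bijection established in the proof of Theorem~\ref{thm:comp}: a subforest of $t$ containing no edge of $c$ decomposes uniquely as (subforest of $P^c(t)$) times (subforest of $R^c(t)$), and contracting such a subforest commutes with performing the cut (an admissible cut of $t$ avoiding $s$ descends to an admissible cut of $t/s$, and conversely). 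The map $m^{2,4}$ is precisely the bookkeeping that puts pruning and trunk back together in the order in which $\varphi=B_+$ recombines them; and since contracting a subforest never affects any root, it commutes with the grafting performed by $B_+$. Granting this, the identity follows.

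The second identity I would then obtain without any further combinatorics. The first identity says that the coaction $\Phi$ --- which agrees with $\Delta$ on trees --- is a morphism for the relevant pair of coalgebra structures; since over a Hopf algebra an antipode is the unique convolution inverse of the identity for such a structure, any such morphism intertwines the two antipodes. Concretely, using the first identity one checks that $\Delta\circ S_{GL}$ and $(S_{GL}\otimes\mop{Id})\circ\Delta$ are both convolution inverses of $\Delta$ in the relevant $\mop{Hom}$-space (the convolution being the one built from $\Delta_{GL}$ on the source), whence they coincide; equivalently one may dualise and invoke the fact --- transparent from Corollary~\ref{transposeL} --- that the module map transposing $\Phi$ respects antipodes.

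The step I expect to be the genuine obstacle is pinning down the $B_+$-combinatorics, which is also why stating the identities precisely is itself delicate: $\varphi=B_+$ is neither an algebra nor a coalgebra morphism, so one must keep track of the new common root that $B_+$ introduces after a cut (it should be absorbed into the trunk half) and verify that this matches the exact placement of factors in $m^{2,4}$ and in the correctly decorated form of the antipode identity. Once the ``cut plus root-avoiding subforest'' decomposition of Theorem~\ref{thm:comp} is carried out with this extra root accounted for, the remainder is routine verification.
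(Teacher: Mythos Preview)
The paper contains no proof of this statement to compare against: it is stated as a \emph{Conjecture}, and moreover the entire passage (including the setup of $H_{GL}$, $\varphi$, and $\Delta_{GL}$) sits inside an \verb|\ignore{...}| block, so it does not even appear in the compiled article. There is therefore nothing on the paper's side to match your proposal to.

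That said, your strategy is the natural one and is exactly parallel to what the paper does for Theorem~\ref{thm:comp}: reduce the first identity to the bijection between ``subforest of $t$ plus admissible cut of the contraction'' and ``admissible cut of $t$ plus a subforest avoiding the cut edges'', and then obtain the antipode identity formally from uniqueness of the convolution inverse. The only caution I would raise is the one you yourself flag at the end, but sharpened: as literally written, the left-hand side $(\Delta_{GL}\otimes\mathrm{id}_{\Cal H})\circ\Delta(t)$ applies $\Delta_{GL}$ to the \emph{first} tensor slot of $\Delta(t)=\sum s\otimes t/s$, i.e.\ to the subforest $s\in\Cal H$, not to the tree $t/s\in H_{GL}$; yet your expansion (``a subforest $s$ of $t$ together with an admissible cut of $t/s$'') is the one for $(\mathrm{id}_{\Cal H}\otimes\Delta_{GL})\circ\Delta(t)$. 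Likewise, on the right-hand side the target of $m^{2,4}$ lands the product $bd$ in the third slot, where $b$ and $d$ are trees but the product is the forest product of $\Cal H$, so the codomain is not obviously $\Cal H\otimes\Cal H\otimes H_{GL}$. Before the combinatorial bijection can be invoked you must first settle precisely which slot carries $\Delta_{GL}$ and which multiplication $m^{2,4}$ uses (forest product in $\Cal H$ versus root-merging in $H_{GL}$); the statement as recorded in the ignored block is at best ambiguous on these points, which is presumably why the authors left it as a conjecture rather than a proposition.
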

}

\begin{cor}\label{automorphismes}
The compatibility between the Hopf algebra structure and the comodule structure yields:
\begin{enumerate}

\item For any infinitesimal character $a$ of $\Cal H$ the operator $L_a$ is a biderivation of $\Cal H_{\makebox{{\tiny{\rm{CK}}}}}^\circ$.

\item Any character $\varphi$ of $\Cal H$ defines an automorphism $L_\varphi$ of the Hopf algebra $\Cal H_{\makebox{{\tiny{\rm{CK}}}}}^\circ$.

\end{enumerate}
\end{cor}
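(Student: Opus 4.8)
The plan is to read Corollary~\ref{automorphismes} off Proposition~\ref{biderivation} by transposition, using that $\Cal H_{\makebox{{\tiny{\rm{CK}}}}}^\circ$ is the graded dual Hopf algebra of $\Cal H_{\makebox{{\tiny{\rm{CK}}}}}$: its product is the transpose of $\Delta_{\makebox{{\tiny{\rm{CK}}}}}$ and its coproduct is the transpose of the product of $\Cal H_{\makebox{{\tiny{\rm{CK}}}}}$. First I would recall, as noted in the proof of Corollary~\ref{transposeL}, that $^t\!L_a$ is by construction the transpose of the operator $L_a\colon\Cal H_{\makebox{{\tiny{\rm{CK}}}}}^\circ\to\Cal H_{\makebox{{\tiny{\rm{CK}}}}}^\circ$ of left multiplication by $a$ for the left $(\Cal H^\circ,\star)$-module structure on $\Cal H_{\makebox{{\tiny{\rm{CK}}}}}^\circ$ dual to the coaction $\Phi$, i.e. $L_a(b)=(a\otimes b)\circ\Phi$. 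The coaction axiom $(\Delta_{\Cal H}\otimes\mop{Id})\circ\Phi=(\mop{Id}_{\Cal H}\otimes\Phi)\circ\Phi$ dualizes to $L_a\circ L_{a'}=L_{a\star a'}$, so $a\mapsto L_a$ is an algebra morphism from $(\Cal H^\circ,\star)$ into $\mop{End}(\Cal H_{\makebox{{\tiny{\rm{CK}}}}}^\circ)$, with $L_e=\mop{Id}$ for $e=\epsilon_{\Cal H}$ the convolution unit. (The compatibility of $\Phi$ with the gradings — by edges on $\Cal H$, by vertices on $\Cal H_{\makebox{{\tiny{\rm{CK}}}}}$ — is what makes these transpositions meaningful on the graded duals.)

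The key step is the elementary transposition dictionary for a linear endomorphism of a graded Hopf algebra with finite-dimensional homogeneous pieces: taking transposes interchanges ``derivation for the product'' with ``coderivation for the transposed coproduct'', and interchanges ``algebra morphism'' with ``coalgebra morphism''. Applying it to $^t\!L_a=(L_a)^{\rm t}$: Proposition~\ref{biderivation} asserts that $^t\!L_a$ is a biderivation of $\Cal H_{\makebox{{\tiny{\rm{CK}}}}}$, i.e. a derivation of $(\Cal H_{\makebox{{\tiny{\rm{CK}}}}},m)$ and a coderivation of $(\Cal H_{\makebox{{\tiny{\rm{CK}}}}},\Delta_{\makebox{{\tiny{\rm{CK}}}}})$; transposing the first statement gives that $L_a$ is a coderivation for the coproduct of $\Cal H_{\makebox{{\tiny{\rm{CK}}}}}^\circ$, and transposing the second gives that $L_a$ is a derivation for the convolution product of $\Cal H_{\makebox{{\tiny{\rm{CK}}}}}^\circ$. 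Together these are assertion~(1). For~(2), Proposition~\ref{biderivation} gives that $^t\!L_\varphi$ is simultaneously an algebra and a coalgebra morphism of $\Cal H_{\makebox{{\tiny{\rm{CK}}}}}$, hence $L_\varphi$ is simultaneously a coalgebra and an algebra morphism of $\Cal H_{\makebox{{\tiny{\rm{CK}}}}}^\circ$, i.e. a bialgebra endomorphism; and a bialgebra endomorphism of a Hopf algebra automatically commutes with the antipode, so only the bijectivity of $L_\varphi$ remains to be checked.

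This bijectivity is the one point I expect to need a genuine (if short) argument, since $\Cal H_{\makebox{{\tiny{\rm{CK}}}}}$ is infinite-dimensional and one cannot simply invoke ``transpose of a bijection''. I would use that $\varphi$ is a character of the \emph{Hopf} algebra $\Cal H$, so it has a convolution inverse $\varphi^{\star-1}=\varphi\circ S$ by~\eqref{inverse}; the algebra-morphism property of $a\mapsto L_a$ then gives $L_\varphi\circ L_{\varphi^{\star-1}}=L_{\varphi\star\varphi^{\star-1}}=L_e=\mop{Id}$ and, symmetrically, $L_{\varphi^{\star-1}}\circ L_\varphi=\mop{Id}$, so $L_\varphi$ is invertible with inverse $L_{\varphi\circ S}$. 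Apart from this, the argument is pure transposition bookkeeping on top of Theorem~\ref{thm:comp}, Corollary~\ref{transposeL} and Proposition~\ref{biderivation} — which is exactly why the statement is phrased as a corollary — and I anticipate no further obstacle.
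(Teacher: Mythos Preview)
Your argument is correct and is precisely the intended reading of the paper's one-line proof (``This is a direct consequence of Proposition~\ref{biderivation}''): you transpose the biderivation/automorphism statements for $^t\!L_a$ and $^t\!L_\varphi$ on $\Cal H_{\makebox{{\tiny{\rm{CK}}}}}$ to obtain the same for $L_a$ and $L_\varphi$ on the graded dual. Your explicit verification of bijectivity via $L_{\varphi}\circ L_{\varphi^{\star-1}}=L_e=\mop{Id}$ is a welcome addition, as the paper leaves this point implicit.
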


\begin{proof}
This is a direct consequence of Proposition \ref{biderivation}.
\end{proof}

We denote by the same sign $\star$ the convolution product on $\Cal H^\circ$ and the left and right actions of $\Cal H^\circ$ on $\Cal H_{\makebox{{\tiny{\rm{CK}}}}}^\circ$. We keep the other star $*$ for the convolution product on $\Cal H_{\makebox{{\tiny{\rm{CK}}}}}^\circ$. For any forest $s\in\Cal H_{\makebox{{\tiny{\rm{CK}}}}}$ (resp. $\Cal H$) we denote by $\delta_s\in\Cal H_{\makebox{{\tiny{\rm{CK}}}}}^\circ$ (resp. $Z_s\in\Cal H^\circ$) the corresponding element of the dual basis. Corollary \ref{automorphismes} in particular implies:

\begin{cor}\label{chv3-5}
Let $\varphi$ be a character of $\Cal H$, le $\alpha$ be any linear map from
$\Cal H$ into $k$, and let $b,c$ be linear maps form $\Cal
H_{\makebox{{\tiny{\rm{CK}}}}}$ into $k$. Let $\varepsilon=\delta_{\emptyset}$ be the co-unit of $\Cal H_{\makebox{{\tiny{\rm{CK}}}}}$. Then:
 \allowdisplaybreaks{
\begin{eqnarray}
		 \alpha\star\varepsilon&=&\varepsilon\star\alpha=\alpha(\racine)\varepsilon,\label{CHV1}\\
	\alpha\star \delta_{\racine}&=&\delta_{\racine}\star \alpha=\alpha,\label{CHV2}\\
		         Z_{\racine}\star b&=&b\star Z_{\racine}=b,\label{CHV3}\\
			    \varphi\star(b*c)&=&(\varphi\star b)*(\varphi\star c),\label{CHV4}\\
		    (\varphi\star b)^{*-1}&=&\varphi\star b^{*-1}.\label{CHV5}
\end{eqnarray}}
\end{cor}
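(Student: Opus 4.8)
The plan is to derive all five identities \eqref{CHV1}--\eqref{CHV5} from the structures already established, treating them essentially as bookkeeping consequences of the bicomodule compatibility and the dual-basis conventions. First I would record the elementary fact that $\racine$ is grouplike in $\Cal H$ (its coproduct is $\racine\otimes\racine$) and $\emptyset$ is grouplike in $\Cal H_{\makebox{{\tiny{\rm{CK}}}}}$, so that $\delta_{\racine}$ is the counit-dual unit for $*$ on $\Cal H_{\makebox{{\tiny{\rm{CK}}}}}^\circ$ and $Z_{\racine}$ is the unit for $\star$ on $\Cal H^\circ$; likewise $\varepsilon=\delta_\emptyset$ is the counit of $\Cal H_{\makebox{{\tiny{\rm{CK}}}}}$. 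For \eqref{CHV1}: unwind the definition of the $\star$-action, $\alpha\star\varepsilon=(\alpha\otimes\varepsilon)\circ\Phi$, and note that on any nonempty forest $u=t_1\cdots t_n$ one has $\Phi(u)=\sum_{s}s\otimes u/s$, so pairing the right leg against $\varepsilon=\delta_\emptyset$ picks out the term $u/s=\emptyset$, which forces $u=s=\racine$; the analogous computation with $\Psi$ gives $\varepsilon\star\alpha$. For \eqref{CHV2}: $\alpha\star\delta_{\racine}=(\alpha\otimes\delta_{\racine})\circ\Phi$, and $\Phi(t)=\sum_s s\otimes t/s$ has exactly one term with $t/s=\racine$, namely $s=t$ (contract everything), giving $\alpha(t)$; again the left-action computation via $\Psi$ is symmetric. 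Equation \eqref{CHV3} is the statement, already recorded in the excerpt via $Z$, that $Z_{\racine}$ is the unit of $\Cal H^\circ$, and that units act trivially on modules: $Z_{\racine}\star b = (Z_{\racine}\otimes b)\circ\Phi$, and since $Z_{\racine}$ vanishes except on $\racine$ (dual-basis convention) the only surviving term of $\Phi(t)=\sum_s s\otimes t/s$ is $s=\racine$, $t/s=t$.

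The heart of the proof is \eqref{CHV4}. I would read it as the dual of the multiplicativity established in Theorem~\ref{thm:comp}, or equivalently as the ``action respects the coproduct of $\Cal H_{\makebox{{\tiny{\rm{CK}}}}}$'' statement for characters. Concretely, for $x\in\Cal H_{\makebox{{\tiny{\rm{CK}}}}}$,
\begin{equation*}
(\varphi\star(b*c))(x)=(\varphi\otimes b\otimes c)\circ(\mop{Id}_{\Cal H}\otimes\Delta_{\makebox{{\tiny{\rm{CK}}}}})\circ\Phi(x),
\end{equation*}
and by \eqref{codistrib} the right-hand side equals $(\varphi\otimes b\otimes c)\circ m^{1,3}\circ(\Phi\otimes\Phi)\circ\Delta_{\makebox{{\tiny{\rm{CK}}}}}(x)$. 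Now $(\varphi\otimes b\otimes c)\circ m^{1,3}$ sends $a_1\otimes b_1\otimes a_2\otimes c_1$ to $\varphi(a_1 a_2)\,b(b_1)\,c(c_1)$, and because $\varphi$ is a \emph{character} we may factor $\varphi(a_1a_2)=\varphi(a_1)\varphi(a_2)$; this turns the expression into $\big((\varphi\otimes b)\circ\Phi\,\otimes\,(\varphi\otimes c)\circ\Phi\big)\circ\Delta_{\makebox{{\tiny{\rm{CK}}}}}(x)$, which is precisely $((\varphi\star b)*(\varphi\star c))(x)$. So \eqref{CHV4} is exactly the multiplicativity of the character $\varphi$ applied under the codistributivity law \eqref{codistrib}; this is the one place where the character hypothesis (rather than mere linearity) is essential, and I expect it to be the main obstacle only in the bookkeeping sense — keeping the four tensor slots straight through $m^{1,3}$. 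Alternatively one can invoke Proposition~\ref{biderivation}/Corollary~\ref{automorphismes}: $^t\!L_\varphi$ is a Hopf algebra automorphism of $\Cal H_{\makebox{{\tiny{\rm{CK}}}}}$, hence its transpose $L_\varphi$ on $\Cal H_{\makebox{{\tiny{\rm{CK}}}}}^\circ$ is an algebra automorphism for $*$, and $\varphi\star b = L_\varphi(b)$ by definition, giving \eqref{CHV4} immediately; I would probably present this shorter argument and keep the explicit Sweedler computation as a remark.

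Finally \eqref{CHV5} follows from \eqref{CHV4} together with \eqref{CHV3} (and the fact that $\varphi\star-$ is unital, $\varphi\star\delta_{\racine}=\varphi(\racine)\delta_\emptyset=\delta_\emptyset$ since $\varphi$ is a character so $\varphi(\racine)=1$ — wait, more carefully: from \eqref{CHV2} with $\alpha=\varphi$ one has $\varphi\star\delta_{\racine}=\varphi$, which is not the $*$-unit; the correct normalization is that $L_\varphi$ fixes the $*$-unit $\delta_\emptyset=\varepsilon$, which is \eqref{CHV1}). So: apply $\varphi\star-$ to the identity $b*b^{*-1}=\varepsilon$; using \eqref{CHV4} on the left and \eqref{CHV1} (with $\alpha=\varphi$, $\varphi(\racine)=1$) on the right gives $(\varphi\star b)*(\varphi\star b^{*-1})=\varepsilon$, and symmetrically on the other side, whence $\varphi\star b^{*-1}=(\varphi\star b)^{*-1}$ by uniqueness of $*$-inverses in the (complete, pro-unipotent) convolution algebra — here one should note that $L_\varphi$ being an algebra automorphism of $\Cal H_{\makebox{{\tiny{\rm{CK}}}}}^\circ$ it in particular commutes with inversion of invertible elements, which is the cleanest phrasing. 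In short, the whole corollary is extracted from \eqref{codistrib} plus the grouplikeness of $\racine$ and $\emptyset$; the only genuine content beyond dual-basis unwinding is \eqref{CHV4}, and that is exactly where ``$\varphi$ is a character'' is used.
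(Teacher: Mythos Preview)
Your proposal is correct and aligns with the paper's approach: the paper gives no explicit proof at all, merely stating that the corollary follows from Corollary~\ref{automorphismes} (i.e.\ that $L_\varphi$ is a Hopf algebra automorphism of $\Cal H_{\makebox{\tiny{\rm{CK}}}}^\circ$), which is exactly the shorter argument you identify for \eqref{CHV4} and from which \eqref{CHV5} follows as you say. Your additional unwinding of \eqref{CHV1}--\eqref{CHV3} and the explicit Sweedler computation via Theorem~\ref{thm:comp} for \eqref{CHV4} are correct supplementary detail the paper leaves implicit; a minor slip is that in your treatment of \eqref{CHV1} you write ``forces $u=s=\racine$'' when you mean that for nonempty $u$ the right leg of $\Phi(u)$ is never the empty forest $\un$ (so the pairing with $\varepsilon=\delta_\emptyset$ vanishes), while $\Phi(\un)=\racine\otimes\un$ gives the surviving term $\alpha(\racine)$.
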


The formulation of this result is completely parallel to Proposition 3.5 in \cite{CHV} on composition and substitution of $B$-series: we will fully
justify this similarity and prove in the next section that Corollary \ref{chv3-5} and Proposition \ref{CHVsuite} (more precisely their counterparts
w.r.t. bialgebra $\wt{\Cal H}$ described in Section \ref{sect:bigebre} below) indeed imply  Proposition 3.5 in \cite{CHV}. Following A. Murua \cite{M} we denote by $\delta$ the character of the Connes--Kreimer Hopf algebra  $\Cal H_{\makebox{{\tiny{CK}}}}$ such that $\delta(\begin{matrix}  \\[-0.5cm]  \racine \end{matrix})=1$ and $\delta(t)=0$ for any tree different from $\emptyset$ and $\begin{matrix}  \\[-0.5cm]  \racine \end{matrix}$. This should not be confused with the infinitesimal character $\delta_{\racine}$.

\begin{prop}\label{correspondence}
There is a one-to-one correspondence between characters of $\Cal H$ and a distinguished subset of characters of $\Cal H_{\makebox{{\tiny{\rm{CK}}}}}$, namely:
\begin{enumerate}

\item For any character $\varphi$ of $\Cal H$, the element
  $\varphi':=\varphi\star\delta_{\racine}$ is the infinitesimal character of $\Cal H_{\makebox{{\tiny{\rm{CK}}}}}$
  which coincides with $\varphi$ on nonempty trees. This settles a one-to-one
  correspondence $\varphi \mapsto \varphi \star \delta_{\racine}$  between
  characters of $\Cal H$ and infinitesimal characters of $\Cal H_{\makebox{{\tiny{\rm{CK}}}}}$ which
  take the value $1$ on the tree $\begin{matrix}  \\[-0.5cm]  \racine \end{matrix}$.
  
\item For any character $\varphi$ of $\Cal H$, the element
  $\varphi_\delta:=\varphi\star\delta$ is the character of $\Cal H_{\makebox{{\tiny{\rm{CK}}}}}$
  which coincides with $\varphi$ on nonempty trees. This 
  settles a one-to-one correspondence $\varphi\mapsto 
  \varphi\star\delta$  between characters of $\Cal H$ and 
  characters of $\Cal H_{\makebox{{\tiny{\rm{CK}}}}}$ which take the value $1$ on 
  the tree $\begin{matrix}  \\[-0.5cm]  \racine \end{matrix}$.

\end{enumerate}
\end{prop}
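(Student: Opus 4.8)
The plan is to reduce both assertions to short computations on the forest basis. The key ingredients are the description of the left coaction on a nonempty tree,
\[
\Phi(t)=\Delta(t)=\sum_{s\subseteq t}s\otimes t/s,
\]
the fact that $\delta_{\racine}$ is a primitive element of $\Cal H_{\makebox{{\tiny{\rm{CK}}}}}^\circ$, and the fact that $L_\varphi$ is a Hopf algebra automorphism of $\Cal H_{\makebox{{\tiny{\rm{CK}}}}}^\circ$ for every character $\varphi$ of $\Cal H$ (Corollary \ref{automorphismes}); throughout I write the relevant action as $(a\star b)(x)=(a\otimes b)\bigl(\Phi(x)\bigr)$.

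The first step is to isolate the one genuinely combinatorial point: for a nonempty tree $t$ and a subforest $s\subseteq t$ one has $v(t/s)=v(t)-e(s)$, and $t/s$ is always a \emph{nonempty} tree, so $t/s=\racine$ forces $e(s)=e(t)$; by connectedness of $t$ the only subforest carrying all the edges of $t$ is $t$ itself, i.e.\ the unique term of $\Delta(t)$ with $\racine$ in the right-hand leg is $t\otimes\racine$. Since $\delta_{\racine}$ and $\delta$ both vanish on every nonempty tree other than $\racine$ and both take the value $1$ on $\racine$, this yields at once, for every nonempty tree $t$ and every character $\varphi$ of $\Cal H$,
\begin{eqnarray*}
(\varphi\star\delta_{\racine})(t)&=&\sum_{s\subseteq t}\varphi(s)\,\delta_{\racine}(t/s)=\varphi(t),\\
(\varphi\star\delta)(t)&=&\sum_{s\subseteq t}\varphi(s)\,\delta(t/s)=\varphi(t),
\end{eqnarray*}
the value at $\racine$ being $1$ in each case since $\Phi(\racine)=\racine\otimes\racine$ and $\varphi(\racine)=\delta_{\racine}(\racine)=\delta(\racine)=1$, while $\Phi(\un)=\racine\otimes\un$ gives $(\varphi\star\delta_{\racine})(\un)=0$ and $(\varphi\star\delta)(\un)=1$. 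Thus both maps agree with $\varphi$ on nonempty trees and send $\racine$ to $1$.

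The second step identifies the algebraic type of the two images. As $\Phi\colon\Cal H_{\makebox{{\tiny{\rm{CK}}}}}\to\Cal H\otimes\Cal H_{\makebox{{\tiny{\rm{CK}}}}}$ is an algebra morphism and $\varphi$, $\delta$ are characters, $\varphi\star\delta=(\varphi\otimes\delta)\circ\Phi$ is a composition of algebra morphisms, hence a character of $\Cal H_{\makebox{{\tiny{\rm{CK}}}}}$. For the other map, $\varphi\star\delta_{\racine}=L_\varphi(\delta_{\racine})$, and a Hopf algebra automorphism carries primitive elements to primitive elements, so $\varphi\star\delta_{\racine}$ is an infinitesimal character of $\Cal H_{\makebox{{\tiny{\rm{CK}}}}}$ (one can also verify this directly from primitivity of $\delta_{\racine}$ together with the identity $(\varphi\otimes\varepsilon_{\Cal H_{\makebox{{\tiny{\rm{CK}}}}}})\circ\Phi=\varepsilon_{\Cal H_{\makebox{{\tiny{\rm{CK}}}}}}$). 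Combined with the first step, $\varphi\mapsto\varphi\star\delta_{\racine}$ takes values in the infinitesimal characters of $\Cal H_{\makebox{{\tiny{\rm{CK}}}}}$ which are $1$ on $\racine$, and $\varphi\mapsto\varphi\star\delta$ in the characters of $\Cal H_{\makebox{{\tiny{\rm{CK}}}}}$ which are $1$ on $\racine$.

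Finally, bijectivity is soft. Injectivity: two characters of $\Cal H$ with the same image agree on all nonempty trees, hence on the algebra generators of $\Cal H=\Cal S(T')$, hence they are equal. Surjectivity: given an infinitesimal character (resp.\ a character) $\eta$ of $\Cal H_{\makebox{{\tiny{\rm{CK}}}}}$ with $\eta(\racine)=1$, put $\varphi(t):=\eta(t)$ on the generating trees of $\Cal H$ and extend multiplicatively; then $\varphi\star\delta_{\racine}$ (resp.\ $\varphi\star\delta$) and $\eta$ are infinitesimal characters (resp.\ characters) of $\Cal H_{\makebox{{\tiny{\rm{CK}}}}}$ agreeing on every tree, and such maps are determined by their values on trees, so they coincide. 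The only step beyond bookkeeping is the combinatorial claim that $s=t$ is the unique subforest of $t$ with $t/s=\racine$, where the edge-grading and the connectedness of $t$ are used — and that is the natural place for a slip.
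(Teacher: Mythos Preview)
Your proof is correct and follows essentially the same approach as the paper's own proof, which is terse (``straightforward to check \ldots obviously \ldots immediate check''). You have simply unpacked those phrases: the computation showing $\varphi\star\delta$ and $\varphi\star\delta_{\racine}$ agree with $\varphi$ on nonempty trees, the use of $\Phi$ being an algebra morphism for the character case, the appeal to Corollary \ref{automorphismes} for the infinitesimal case, and the bijectivity argument via generators are all the natural expansions of what the paper leaves implicit.
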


\begin{proof}
Let $\varphi$ be a character of $\Cal H$. By using the fact that $\Phi$ is an
algebra morphism it is straightforward to check that $\varphi\star\delta$ is a
character of $\Cal H_{\makebox{{\tiny{\rm{CK}}}}}$, and that $\varphi\star\delta(t)=\varphi(t)$ for
any nonempty tree $t$. Replacing $\delta$ by $\delta_{\racine}$ we obviously get
an infinitesimal character. The rest is an immediate check.
\end{proof}

Using  Proposition \ref{correspondence} and identity \eqref{CHV5} we
immediately obtain the analogon of the last identity of \cite[Prop. 3.5]{CHV}:

\begin{prop}\label{CHVsuite}
Let $b$ be a character of $\Cal H_{\makebox{{\tiny{\rm{CK}}}}}$ such that $b(\racine)=1$, and let $\wt
b$ be the unique character of $\Cal H$ such that $b=\wt b\star\delta$. Then:
\begin{equation}
b^{*-1}=\wt b\star\delta^{*-1}.
\end{equation}
\end{prop}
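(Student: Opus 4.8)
The plan is to deduce Proposition~\ref{CHVsuite} directly from Proposition~\ref{correspondence} together with identity~\eqref{CHV5}, without any further combinatorics. First I would record the setup: by Proposition~\ref{correspondence}(2) the map $\varphi\mapsto\varphi\star\delta$ is a bijection from characters of $\Cal H$ onto those characters of $\Cal H_{\makebox{{\tiny{\rm{CK}}}}}$ taking the value $1$ on $\racine$, so the hypothesis $b(\racine)=1$ guarantees that a unique character $\wt b$ of $\Cal H$ with $b=\wt b\star\delta$ exists, and the statement to prove is precisely that its convolution inverse $b^{*-1}$ (inverse with respect to the Connes--Kreimer convolution product $*$) equals $\wt b\star\delta^{*-1}$.

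The key step is to apply~\eqref{CHV5}, which reads $(\varphi\star c)^{*-1}=\varphi\star c^{*-1}$ for a character $\varphi$ of $\Cal H$ and a linear map $c$ on $\Cal H_{\makebox{{\tiny{\rm{CK}}}}}$. Taking $\varphi=\wt b$ and $c=\delta$ gives $b^{*-1}=(\wt b\star\delta)^{*-1}=\wt b\star\delta^{*-1}$, which is exactly the claim. What makes this legitimate is that $\delta$ is indeed invertible for $*$: it is a character of $\Cal H_{\makebox{{\tiny{\rm{CK}}}}}$, hence lies in the pro-unipotent group $G$ of Section~\ref{sect:prelim} (it sends $\un$ to $1$ and $\mop{Ker}\epsilon$ into the appropriate filtration), so $\delta^{*-1}$ exists; and $\wt b\star\delta = b$ by construction. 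So the only thing to check carefully is that the hypotheses of~\eqref{CHV5} are met, i.e. that $\wt b$ is genuinely a \emph{character} of $\Cal H$ (not merely a linear map), which is exactly what Proposition~\ref{correspondence}(2) supplies.

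I expect the proof to be essentially a one-line consequence, so there is no serious obstacle; the only subtlety worth a sentence is making sure the reader sees why $\delta^{*-1}$ is well-defined and why $\wt b$ qualifies as the ``$\varphi$'' in~\eqref{CHV5}. A clean write-up would be: \emph{By Proposition~\ref{correspondence}(2), $\wt b$ is a character of $\Cal H$ with $\wt b\star\delta=b$; since $\delta$ is a character of $\Cal H_{\makebox{{\tiny{\rm{CK}}}}}$ it is invertible for $*$, and applying identity~\eqref{CHV5} with $\varphi=\wt b$ and $b$ there replaced by $\delta$ yields $b^{*-1}=(\wt b\star\delta)^{*-1}=\wt b\star\delta^{*-1}$.} One might additionally remark, as the paragraph preceding the statement already does, that this is the Hopf-algebraic shadow of the last identity of \cite[Prop.~3.5]{CHV} relating the inverse of a $B$-series character under composition to substitution by the inverse backward-error character.
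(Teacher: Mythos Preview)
Your proposal is correct and matches the paper's approach exactly: the paper simply remarks that the result follows immediately from Proposition~\ref{correspondence} and identity~\eqref{CHV5}, which is precisely the argument you spell out. Your added justification that $\delta^{*-1}$ exists and that $\wt b$ is a genuine character is a welcome clarification of what the paper leaves implicit.
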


We will denote by $\exp^*$ and $\log^*$ the exponential and the logarithm\footnote{Note that this is not related to the notations used by Chapoton in \cite{C}, briefly recalled in section \ref{sect:log}, which may be quite confusing here.} with respect to the convolution product of $\Cal H^\circ_{\makebox{{\tiny{CK}}}}$. Now using Proposition \ref{correspondence} and identity \eqref{CHV4} we see that any character $b$ of $\Cal H_{\makebox{{\tiny{CK}}}}$ with $b(\begin{matrix}  \\[-0.5cm]  \racine \end{matrix})=1$ writes in a unique way:
\begin{equation}
\label{corr2}
	b=\exp^*(\varphi\star\delta_{\racine} )=\varphi\star(\exp^* \delta_{\racine} ),
\end{equation}
where $\varphi$ is a character of $\Cal H$. Recall that:
\begin{equation*}
	\exp^* \delta_{\racine}(t)=\frac{1}{t!}
\end{equation*}
for any nonempty tree (see \cite[Theorem 9]{M} and the proof therein). In
other words, $\exp^* \delta_{\racine}=E\star\delta$, where $E$ is the
character defined in section \ref{sect:log}.

\subsection{The left $\wt{\Cal H}$-comodule structure}
\label{sect:bigebre}

All results of the previous subsection are still valid almost without modification when one replaces the
Hopf algebra $\Cal H$ with the bialgebra $\wt {\Cal H}$. Remark first that
$\wt {\Cal H}$ and $\Cal H_{\makebox{\tiny{\rm{CK}}}}$ are naturally isomorphic as vector spaces and
even as algebras. The comodule map $\Phi: \Cal H_{\makebox{\tiny{\rm{CK}}}}\to \wt {\Cal H}\otimes\Cal
H_{\makebox{\tiny{\rm{CK}}}}$ is simply given by the coproduct $\Delta$ modulo this
identification. Theorem \ref{thm:comp} is still valid in this setting, as well
as Corollary \ref{transposeL}, Proposition \ref{biderivation} and Corollary
\ref{automorphismes}. Corollary \ref{chv3-5} also holds provided $\racine$ is
replaced by the unit $\un$ in Equation
\eqref{CHV1}. Proposition \ref{correspondence} is replaced by:

\begin{prop}\label{correspondence2}
There is a one-to-one correspondence between characters of $\wt{\Cal H}$ and characters of $\Cal H_{\makebox{{\tiny{\rm{CK}}}}}$, namely:
\begin{enumerate}

\item For any character $\varphi$ of $\wt {\Cal H}$, the element
  $\varphi':=\varphi\star\delta_{\racine}$ is the infinitesimal character of $\Cal H_{\makebox{{\tiny{\rm{CK}}}}}$
  which coincides with $\varphi$ on nonempty trees. This settles a one-to-one
  correspondence $\varphi \mapsto \varphi \star \delta_{\racine}$  between
  characters of $\wt{\Cal H}$ and infinitesimal characters of $\Cal H_{\makebox{{\tiny{\rm{CK}}}}}$.

  \item For any character $\varphi$ of $\wt{\Cal H}$, the element
  $\varphi_\delta:=\varphi\star\delta$ is the character of $\Cal H_{\makebox{{\tiny{\rm{CK}}}}}$
  which coincides with $\varphi$ on nonempty trees. This 
  settles a one-to-one correspondence $\varphi\mapsto 
  \varphi\star\delta$  between characters of $\wt{\Cal H}$ and 
  characters of $\Cal H_{\makebox{{\tiny{\rm{CK}}}}}$. 
\end{enumerate}
\end{prop}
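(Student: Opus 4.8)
The plan is to mirror, line by line, the proof of Proposition \ref{correspondence}, being careful about the one structural difference: in $\wt{\Cal H}$ the element $\begin{matrix}  \\[-0.5cm]  \racine \end{matrix}$ is only grouplike (not invertible), so the submonoid/subgroup bookkeeping that appeared in Proposition \ref{correspondence} disappears, and characters of $\wt{\Cal H}$ correspond to \emph{all} characters of $\Cal H_{\makebox{{\tiny{\rm{CK}}}}}$ rather than to a distinguished subset. First I would fix a character $\varphi$ of $\wt{\Cal H}$ and check directly that $\varphi_\delta:=\varphi\star\delta$ is a character of $\Cal H_{\makebox{{\tiny{\rm{CK}}}}}$: this uses only that the left coaction $\Phi\colon\Cal H_{\makebox{{\tiny{\rm{CK}}}}}\to\wt{\Cal H}\otimes\Cal H_{\makebox{{\tiny{\rm{CK}}}}}$ is an algebra morphism (true by construction in Section \ref{sect:bigebre}) together with multiplicativity of $\delta$ and $\varphi$, exactly as in the original argument, so $\varphi\star\delta=(\varphi\otimes\delta)\circ\Phi$ is a composition of algebra morphisms.

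Next I would evaluate $\varphi_\delta$ on a nonempty tree $t$. Since $\Phi(t)=\Delta(t)=\sum_{s\subseteq t}s\otimes t/s$, we get $\varphi_\delta(t)=\sum_{s\subseteq t}\varphi(s)\,\delta(t/s)$, and $\delta(t/s)$ is nonzero only when $t/s$ is $\begin{matrix}  \\[-0.5cm]  \racine \end{matrix}$, i.e.\ when $s=t$ (the full subforest), in which case $\delta(t/s)=1$; hence $\varphi_\delta(t)=\varphi(t)$. This shows the map $\varphi\mapsto\varphi\star\delta$ lands where claimed. For injectivity, the same computation shows $\varphi$ is recovered from $\varphi_\delta$ on nonempty trees, and $\varphi$ on any forest is then determined by multiplicativity; for surjectivity, given any character $b$ of $\Cal H_{\makebox{{\tiny{\rm{CK}}}}}$, define $\wt b$ on $T$ (nonempty trees) by $\wt b(t):=b(t)$ and extend multiplicatively to $\wt{\Cal H}=S(T)$ — this is automatically a character of $\wt{\Cal H}$ since $\wt{\Cal H}$ is free commutative on $T$ — and then $\wt b\star\delta=b$ on nonempty trees by the computation above, hence on all forests by multiplicativity of both sides. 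Replacing $\delta$ by $\delta_{\racine}$ throughout and noting $\exp^*$ carries infinitesimal characters to characters (so $\delta_{\racine}$ is, up to this exponential, the "log" of $\delta$) gives part (1): $\varphi\star\delta_{\racine}$ is an infinitesimal character of $\Cal H_{\makebox{{\tiny{\rm{CK}}}}}$ agreeing with $\varphi$ on nonempty trees, and every infinitesimal character of $\Cal H_{\makebox{{\tiny{\rm{CK}}}}}$ arises this way — here there is no "value $1$ on $\begin{matrix}  \\[-0.5cm]  \racine \end{matrix}$" constraint, precisely because $\wt{\Cal H}$ does not know that $\begin{matrix}  \\[-0.5cm]  \racine \end{matrix}$ should be a unit.

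The only genuine point requiring care — and the place I would slow down — is checking that the construction is well defined at the level of \emph{bialgebras without antipode}: one must verify that the convolution $\varphi\star\delta$ makes sense (the convolution algebra $\Cal L(\Cal H_{\makebox{{\tiny{\rm{CK}}}}},k)$ is fine since $\Cal H_{\makebox{{\tiny{\rm{CK}}}}}$ is a genuine graded connected Hopf algebra, so no issue there) and, more subtly, that the left $\wt{\Cal H}$-comodule structure $\Phi$ on $\Cal H_{\makebox{{\tiny{\rm{CK}}}}}$ is counital with respect to $\wt{\Cal H}$'s counit, so that the coaction axioms used implicitly in the identity $\varphi_\delta=(\varphi\otimes\delta)\circ\Phi$ being an algebra morphism actually hold — this is asserted at the start of Section \ref{sect:bigebre} and I would just invoke it. With that in hand, the whole proof is the one-paragraph check given for Proposition \ref{correspondence}, and I would simply write: "The proof is identical to that of Proposition \ref{correspondence}, using that $\Phi$ is an algebra morphism and that $\delta(t/s)\neq 0$ forces $s=t$; the absence of the normalization condition on $\begin{matrix}  \\[-0.5cm]  \racine \end{matrix}$ reflects that $\begin{matrix}  \\[-0.5cm]  \racine \end{matrix}$ is grouplike but not invertible in $\wt{\Cal H}$."
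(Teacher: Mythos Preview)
Your proposal is correct and follows essentially the same approach as the paper, which gives no separate proof for Proposition \ref{correspondence2} but simply asserts that the argument for Proposition \ref{correspondence} carries over verbatim to the $\wt{\Cal H}$ setting (and adds the remark that item (2) identifies the two characters via the natural algebra isomorphism $\wt{\Cal H}\cong\Cal H_{\makebox{{\tiny{\rm{CK}}}}}$). Your write-up is in fact more detailed than the paper's own treatment---the explicit injectivity/surjectivity check and the care about the comodule structure over a bialgebra without antipode are all yours---but the core idea (use that $\Phi$ is an algebra morphism, then compute on trees using $\delta(t/s)\neq 0\Rightarrow s=t$) is identical.
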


Concerning item (2) in the foregoing proposition it is easily seen that this correspondence identifies two characters via the natural algebra isomorphism between $\Cal H_{\makebox{\tiny{\rm{CK}}}}$ and $\wt{\Cal H}$.

Finally, Proposition \ref{CHVsuite} still holds with condition $b(\racine)=1$
dropped, where $\wt b$ is now the unique character of $\wt{\Cal H}$ such that $b=\wt b\star\delta$.

\subsection{Operadic interpretation}
\label{operads}

Recall \cite{C} that one can associate a pro-nilpotent group $G_O$ to any augmented operad $O$ (i.e. any operad with no $0$-ary operation and the identity as unique $1$-ary operation). The associated Lie algebra $\g g_O$ is in fact a pre-Lie algebra. The group of characters of $\Cal H_\sigma$ is exactly the group associated with the augmented operad PreLie (\cite{ChaLiv}, \cite{C}). The pre-Lie operation giving rise to the corresponding Lie algebra is exactly the pre-Lie operation $\rhd_\sigma$. One should emphasize that the existence of a pre-Lie structure on $\g g_{O}$ has nothing to do with the fact that the operad $O$ is PreLie itself! Our second pre-Lie structure is the one on the free pre-Lie algebra with one generator, which is nothing but the linear span of rooted trees endowed with the grafting  
$\curvearrowright$ (see next section).\\

One may wonder if the character group $G_{\makebox{{\tiny{CK}}}}$ of the
Connes--Kreimer Hopf algebra is obtained from an augmented operad $\mop{CK}$
along the same lines. A partial answer to this question is given by the NAP
operad (\cite{CL2}, \cite{L}). This operad is defined as follows:
$\mop{NAP}(n)$ is the linear span of rooted trees with $n$ vertices numbered
from $1$ to $n$. The action of the symmetric group $S_n$ is obvious, and the
composition
\begin{eqnarray*}
	\gamma:\mop{NAP}(n)\times \mop{NAP}(p_1)\times\cdots\times
	\mop{NAP}(p_n)&\longrightarrow& \mop{NAP}(p_1+\cdots + p_n)\\
	(t;t_1,t_2,\ldots,t_n)&\longmapsto&\gamma(t;t_1,t_2,\ldots, t_n)
\end{eqnarray*}
is given by replacing vertex number $i$ of $t$ by the root of $t_i$ and
re-indexing the vertices of the big tree thus obtained. NAP stands for
``Non-Associative Permutative''. A NAP algebra is a vector space $A$ endowed
with a bilinear map $\circ:A\times A\to A$ such that $(a\circ b)\circ
c=(a\circ c)\circ b$. NAP algebras already appeared under the name ``right
commutative algebras'' in \cite{DL}, where the authors also show that the free
NAP algebra with one generator is the space of rooted trees endowed with the
Butcher product (see below). It is proven in
\cite{CL2} that the character group of $\Cal H_{\makebox{{\tiny{CK}}}}$ is a
  subgroup of $G_{\makebox{{\tiny{NAP}}}}$ in a natural way.

\section{$B$-series: composition and substitution}
\label{sect:B-series}

Consider any left pre-Lie algebra $(A,\rhd)$, and introduce a fictitious unit $\un$ such that $\un\rhd a=a\rhd\un=a$ for any $a\in A$. Due to the fact that $(T,\curvearrowright)$ is the free pre-Lie algebra with one generator \cite{ChaLiv}, there is for any $a\in A$ a unique pre-Lie algebra morphism $F_a:(T,\curvearrowright)\longrightarrow (A,\rhd)$ such that $F_a(\racine)=a$. A {\sl $B$-series\/} is an element of $hA[[h]]\oplus k.\un$ defined by:
\begin{equation}
\label{def:B-series}
	B(\alpha;a):=\alpha(\emptyset)\un+\sum_{s\in T}h^{v(s)}\frac{\alpha(s)}{\sigma(s)}F_a(s),
\end{equation}
where $\alpha$ is any linear form on $T\oplus k\emptyset$. It matches the usual notion of $B$-series \cite{HWL}, see Remark \ref{rmk:B-series}, when $A$ is the pre-Lie algebra of vector fields on $\R^n$, with $a\rhd b=\nabla_a b$ for the canonical flat torsion-free connection $\nabla$ on $\R^n$ (it is also convenient to set  $F_a(\emptyset)=\un$). The vector fields $F_a(t)$ for a tree $t$ are differentiable maps from $\R^n$ to $\R^n$ called {\sl{elementary differentials\/}}. $B$-series can be composed coefficientwise, as series in the indeterminate $h$ whose coefficients are maps from $\R^n$ to $\R^n$. The same definition with trees decorated by a set of colours $\Cal D$ leads to straightforward generalizations. For example $P$-series used in partitioned Runge-Kutta methods \cite{HWL} correspond to bi-coloured trees.\\

A slightly different way of defining $B$-series is the following: consider the unique pre-Lie algebra morphism $\Cal F_a:T\to hA[h]$ such that $\Cal F_a(\racine)=ha$. It respects the graduations given by the number of vertices and the powers of $h$ respectively, hence it extends to $\Cal F_a:\widehat T\to hA[[h]]$, where $\widehat T$ is the completion of $T$ with respect to the graduation. We further extend it to the empty tree by setting $\Cal F_a(\emptyset)=\un$. We have then:
\begin{equation}
\label{Bseriesbis}
	B(\alpha;a)=\Cal F_a\circ \wt\delta^{-1}(\alpha),
\end{equation}
where $\wt\delta$ is the isomorphism from $\widehat T\oplus k\emptyset$ to $(T\oplus k\emptyset)^*$ given by the normalized dual basis.\\

We restrict ourselves to $B$-series $B(\alpha;a)$ with $\alpha(\emptyset)=1$. Such $\alpha$'s are in one-to-one correspondence with characters of the algebra of forests (which is the underlying algebra of either $\Cal H_{\makebox{{\tiny{CK}}}}$ or $\wt{\Cal H}$) by setting:
\begin{equation*}
	\alpha(t_1\cdots t_k):=\alpha(t_1)\cdots\alpha(t_k).
\end{equation*}
The Hairer-Wanner theorem \cite[Theorem III.1.10]{HWL} says that composition of $B$-series corresponds to the convolution product of characters of $\Cal H_{\makebox{{\tiny{CK}}}}$, namely:
\begin{equation}
\label{hw-theorem}
	B(\beta;a)\circ B(\alpha;a)=B(\alpha * \beta,a),
\end{equation}
where linear forms $\alpha,\beta$ on $T\oplus k\emptyset$ and their character counterparts are identified modulo the above correspondence.\\

Let us now turn to substitution of $B$-series, following reference \cite{CHV}. The idea is to replace the vector field $a$ in a $B$-series $B(\beta;a)$ by another vector field $\wt a$ which expresses itself as a $B$-series, i.e. $\wt a=h^{-1}B(\alpha;a)$ where $\alpha$ is now a linear form on $T\oplus k\emptyset$ such that 
$\alpha(\emptyset)=0$. Such $\alpha$'s are again in one-to-one correspondence with characters of $\wt {\Cal H}$.

\begin{prop}
\label{substitution}
Let $\alpha,\beta$ be linear forms on $T$, we have:
\begin{equation*}
	B\big(\beta;\frac 1h B(\alpha;a)\big)=B(\alpha\star\beta;a),
\end{equation*}
where $\alpha$ is multiplicatively extended to
forests, $\beta$ is seen as an infinitesimal character of $\Cal H_{\makebox{{\tiny{CK}}}}$ and where $\star$ is the convolution product associated to the coproduct $\Delta_{\wt{\Cal H}}$ .
\end{prop}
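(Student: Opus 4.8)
The plan is to work with the pre-Lie description \eqref{Bseriesbis} of $B$-series and to realise substitution of $B$-series as precomposition by a single pre-Lie endomorphism of $\wh T$. For a linear form $\gamma$ on $T\oplus k\emptyset$ with $\gamma(\emptyset)=0$ write $u_\gamma:=\wt\delta^{-1}(\gamma)=\sum_{s\in T}\frac{\gamma(s)}{\sigma(s)}s\in\wh T$, so that \eqref{Bseriesbis} reads $B(\gamma;a)=\Cal F_a(u_\gamma)$; this identity holds verbatim when the honest vector field $a$ is replaced by a formal one, since its derivation from \eqref{def:B-series} only uses $\Cal F_\bullet(s)=h^{v(s)}F_\bullet(s)$ and linearity. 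By definition $h\wt a=B(\alpha;a)=\Cal F_a(u_\alpha)$. As $(T,\curvearrowright)$ is the free pre-Lie algebra on $\racine$ \cite{ChaLiv} and $u_\alpha$ has no term in vertex-degree $0$, there is a unique continuous pre-Lie endomorphism $\Theta_\alpha$ of $\wh T$ with $\Theta_\alpha(\racine)=u_\alpha$; since $\Theta_\alpha$ strictly raises the number of vertices, all sums below converge in each graded piece of $hA[[h]]$. Now $\Cal F_a\circ\Theta_\alpha$ is a pre-Lie morphism from $(T,\curvearrowright)$ to $(A,\rhd)$ sending $\racine$ to $\Cal F_a(u_\alpha)=h\wt a$, hence by uniqueness it equals $\Cal F_{\wt a}$, and therefore
\begin{equation*}
	B\big(\beta;\tfrac1h B(\alpha;a)\big)=\Cal F_{\wt a}(u_\beta)=\Cal F_a\big(\Theta_\alpha(u_\beta)\big).
\end{equation*}
It then suffices to prove the identity $\Theta_\alpha(u_\beta)=u_{\alpha\star\beta}$ in $\wh T$: granting it, \eqref{Bseriesbis} gives $\Cal F_a(\Theta_\alpha u_\beta)=\Cal F_a(u_{\alpha\star\beta})=B(\alpha\star\beta;a)$.

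The identity $\Theta_\alpha(u_\beta)=u_{\alpha\star\beta}$ amounts to saying that $\Theta_\alpha$, carried through the normalised dual basis isomorphism $\wt\delta$, is left $\star$-convolution by $\alpha$. On the dual side this operator is, by the $\wt{\Cal H}$-version of Corollary \ref{transposeL} and Proposition \ref{biderivation} (Section \ref{sect:bigebre}), the transpose of the Hopf algebra automorphism ${}^t\!L_\alpha$ of $\Cal H_{\makebox{{\tiny{CK}}}}$, with ${}^t\!L_\alpha(v)=\sum_s\alpha(s)\,v/s$, the sum running over spanning subforests $s$ of the tree $v$; hence $(\alpha\star\beta)(v)=\sum_s\alpha(s)\,\beta(v/s)$, and because ${}^t\!L_\alpha$ is a coalgebra automorphism this is again an infinitesimal character of $\Cal H_{\makebox{{\tiny{CK}}}}$, so the right-hand side of the Proposition is a genuine $B$-series of the above type. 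To compute $\Theta_\alpha$ on a tree $\tau$ one writes $\tau$ as an iterated grafting of $v(\tau)$ copies of $\racine$ and substitutes $u_\alpha$ for each copy; expanding multilinearly, $\Theta_\alpha(\tau)$ is the sum over all ways of replacing each vertex of $\tau$ by a rooted tree and grafting the resulting pieces along the edges of $\tau$, weighted by $\prod_j\frac{\alpha(s_j)}{\sigma(s_j)}$. Collecting isomorphism types, the coefficient of a tree $v$ in $\Theta_\alpha(\tau)$ is $\sum_{[s]}\frac{\alpha(s)}{\sigma(s)}M(s,\tau;v)$, where $[s]$ ranges over isomorphism classes of forests admitting a spanning embedding into $v$ with quotient $\tau$, and $M(s,\tau;v)$ counts the ways of performing such a substitution and obtaining $v$. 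By the normalisation relating insertion numbers recalled in Sections \ref{ssect:normalisation} and \ref{sect:ckhopf} one has $M(s,\tau;v)=\frac{\sigma(\tau)\sigma(s)}{\sigma(v)}N(s,\tau;v)$, where $N(s,\tau;v)$ is the number of spanning subforests of $v$ in the class $[s]$ with quotient $\tau$. Using $\sigma(s)=\prod_j\sigma(s_j)$ and the multiplicativity of $\alpha$, the coefficient of $v$ in $\Theta_\alpha(u_\beta)=\sum_\tau\frac{\beta(\tau)}{\sigma(\tau)}\Theta_\alpha(\tau)$ becomes $\frac1{\sigma(v)}\sum_{s\smop{ spanning subforest of}v}\alpha(s)\beta(v/s)=\frac{(\alpha\star\beta)(v)}{\sigma(v)}$, which is exactly the coefficient of $v$ in $u_{\alpha\star\beta}$.

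The real work is concentrated in this last, purely combinatorial step, namely the identity $M(s,\tau;v)=\frac{\sigma(\tau)\sigma(s)}{\sigma(v)}N(s,\tau;v)$: one must verify that the multiplicity with which a tree $v$ is produced by grafting prescribed pieces into $\tau$ through the free pre-Lie structure, divided by the product of the symmetry factors of the pieces, agrees with $\frac{\sigma(\tau)}{\sigma(v)}$ times the number of spanning subforests $s\subseteq v$ with $v/s\cong\tau$. This is the $N$--$M$ (equivalently $N'$--$M'$) normalisation already encountered in Sections \ref{ssect:normalisation} and \ref{sect:ckhopf}, and it is precisely here that the coproduct of $\wt{\Cal H}$ --- contraction along spanning subforests --- appears as the algebraic counterpart of substitution of $B$-series. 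Everything else is formal: the reduction via \eqref{Bseriesbis}, the universal property of $(T,\curvearrowright)$, and the remark that precomposing the universal morphism $\Cal F_a$ by $\Theta_\alpha$ yields the morphism attached to the substituted vector field $\tfrac1h B(\alpha;a)$.
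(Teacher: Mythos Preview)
Your proof is correct and follows the same architecture as the paper: both set $u_\gamma=\wt\delta^{-1}(\gamma)$, use the universal property of the free pre-Lie algebra $(T,\curvearrowright)$ to produce the endomorphism $A_\alpha$ (your $\Theta_\alpha$) with $\Theta_\alpha(\racine)=u_\alpha$, observe that $\Cal F_a\circ\Theta_\alpha=\Cal F_{\frac1h B(\alpha;a)}$ by uniqueness, and thereby reduce the Proposition to the single identity $\Theta_\alpha(t)=\wt\delta^{-1}(\alpha\star\wt\delta_t)$ for $t\in\widehat T$.

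The only genuine difference is how this last identity is closed. The paper argues abstractly: both $\Theta_\alpha$ and $t\mapsto\wt\delta^{-1}(\alpha\star\wt\delta_t)$ are pre-Lie morphisms agreeing on the generator $\racine$, the second by the already-established distributivity \eqref{CHV4} (equivalently Corollary~\ref{automorphismes}). You instead compute $\Theta_\alpha(\tau)$ explicitly as the operadic PreLie composition of $\tau$ with $v(\tau)$ copies of $u_\alpha$, collect isomorphism types, and match coefficients against the spanning-subforest expansion of $\alpha\star\beta$ via the symmetry normalisation $M(s,\tau;v)=\frac{\sigma(\tau)\sigma(s)}{\sigma(v)}N(s,\tau;v)$. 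This is the forest generalisation of the tree-level identity recorded in \S\ref{ssect:normalisation} and \S\ref{sect:ckhopf}; it follows by the same orbit-counting argument but is not literally stated there, so you are right to flag it as the place where the ``real work'' sits. The payoff of the paper's route is that all this combinatorics has been absorbed into Theorem~\ref{thm:comp} and Proposition~\ref{biderivation}, so one line suffices; the payoff of yours is that it makes the appearance of the $\wt{\Cal H}$-coproduct (contraction of spanning subforests) visibly equal to PreLie operadic substitution.

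Two small inaccuracies worth fixing. First, $\Theta_\alpha$ does \emph{not} strictly raise the number of vertices when $\alpha(\racine)\neq 0$; it is merely vertex-filtration non-decreasing, which is still enough for the required convergence in $\widehat T$ and $hA[[h]]$. Second, ``write $\tau$ as an iterated grafting of $v(\tau)$ copies of $\racine$'' should be understood operadically rather than literally (for instance $\arbrebb$ is not a single $\curvearrowright$-word); what you really use is that the free $\mop{PreLie}$-algebra on one generator is $\bigoplus_n\mop{PreLie}(n)_{S_n}$, so that $\Theta_\alpha(\tau)$ is the operadic composition $\tau(u_\alpha,\ldots,u_\alpha)$.
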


\begin{proof}
Let $\alpha\in (T\oplus k\emptyset)^*$ such that $\alpha(\emptyset)=0$. By universal property of the free pre-Lie algebra there exists a unique pre-Lie algebra morphism $A_\alpha: T\to \widehat T$ such that $A_\alpha(\racine)=\wt\delta^{-1}(\alpha)$, which obviously extends to $A_\alpha: \widehat T\to \widehat T$. The following equality between pre-Lie algebra morphisms is a straightforward consequence of \eqref{Bseriesbis}:
\begin{equation*}
	\Cal F_{\frac 1hB(\alpha;a)}=\Cal F_a\circ A_\alpha.
\end{equation*}
Applying \eqref{Bseriesbis} twice we have then:
\begin{equation*}
	B\big(\beta;\frac 1h B(\alpha;a)\big)=\Cal F_a\circ A_\alpha\circ \wt\delta^{-1}(\beta),
\end{equation*}
whereas:
\begin{equation*}
	B\big(\alpha\star\beta;a)=\Cal F_a\circ\wt\delta^{-1}(\alpha\star\beta).
\end{equation*}
We are done if we can prove that:
\begin{equation*}
	A_\alpha(t)=\wt\delta^{-1}(\alpha\star\wt\delta_t)
\end{equation*}
for any $t\in\widehat T$. This is obviously true for $t=\racine$ according to the definition of $A_\alpha$ and \eqref{CHV2}, hence for any $t$ because $A_\alpha$ is a pre-Lie algebra morphism, and $\wt\delta^{-1}\big(\alpha\star\wt\delta(-)\big)$ as well according to the distributivity equation \eqref{CHV4}. This proves the proposition.
\end{proof}

\section{The CHV-Murua $\omega$ map and quasi-shuffle products}
\label{sect:Murua}

We are interested in the inverse $L$ of the character $E$ for the convolution product $\star$. We firstly give a quick proof of the
following result due to A. Murua \cite[Remark 11]{M}:

\begin{thm}
$$
	\omega:=L\star\delta_{\racine}=\log^*\delta.
$$
\end{thm}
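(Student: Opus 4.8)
The plan is to compute both sides as characters (resp. infinitesimal characters) of $\Cal H_{\makebox{{\tiny{CK}}}}^\circ$ and show they agree. Recall from Section~\ref{sect:log} that $E$ is the character of $\Cal H$ with $E(t)=1/t!$ and $L=E^{\star-1}=E\circ S$. By Proposition~\ref{correspondence}(1), $L\star\delta_{\racine}$ is exactly the infinitesimal character of $\Cal H_{\makebox{{\tiny{CK}}}}$ coinciding with $L$ on nonempty trees, so $\omega$ is a well-defined primitive element of $\Cal H_{\makebox{{\tiny{CK}}}}^\circ$. Thus it suffices to show $\exp^*(\omega)=\delta$, i.e. $\exp^*(L\star\delta_{\racine})=\delta$, since $\log^*$ is the inverse of $\exp^*$ on the pro-unipotent group.

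The key computation uses identity \eqref{corr2} together with the fact, recalled at the end of Section~\ref{sect:ckhopf}, that $\exp^*\delta_{\racine}=E\star\delta$. First I would invoke Corollary~\ref{chv3-5}, specifically identity \eqref{CHV4} (distributivity of the left $\Cal H^\circ$-action over the convolution product $*$ of $\Cal H_{\makebox{{\tiny{CK}}}}^\circ$), which by Corollary~\ref{automorphismes} says precisely that left-multiplication by a character $\varphi$ of $\Cal H$ is an algebra automorphism of $(\Cal H_{\makebox{{\tiny{CK}}}}^\circ,*)$; consequently it commutes with $\exp^*$. Applying this with $\varphi=L$ (which is a character of $\Cal H$ since $E$ is and characters form a group), and using $\omega = L\star\delta_{\racine}$ together with \eqref{CHV3} to handle the unit, I get
\begin{equation*}
	\exp^*(\omega)=\exp^*(L\star\delta_{\racine})=L\star\exp^*(\delta_{\racine})=L\star(E\star\delta).
\end{equation*}
Now $L\star E$ is to be read as a convolution in $\Cal H^\circ$ (the left action of $\Cal H^\circ$ on $\Cal H_{\makebox{{\tiny{CK}}}}^\circ$ is associative over the $\Cal H^\circ$-product, which is part of the bicomodule/action formalism of Section~\ref{sect:ckhopf}), and since $L=E^{\star-1}$ this gives $L\star E=Z_{\racine}$, the unit of $\Cal H^\circ$. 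Hence $\exp^*(\omega)=Z_{\racine}\star\delta=\delta$ by \eqref{CHV3}, which is exactly $\omega=\log^*\delta$.

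The main obstacle, and the point needing the most care, is the bookkeeping of the two different star-products and the precise sense in which left-multiplication by $L$ commutes with $\exp^*$: one must check that $L\star(-)$ is not merely additive but genuinely multiplicative for $*$, which is the content of \eqref{CHV4}, and that the associativity $L\star(E\star\delta)=(L\star E)\star\delta$ holds with $(L\star E)$ taken in $\Cal H^\circ$ — this is the module-associativity axiom, valid because $\Phi$ makes $\Cal H_{\makebox{{\tiny{CK}}}}$ an $\Cal H$-comodule, so dually $\Cal H_{\makebox{{\tiny{CK}}}}^\circ$ is an $\Cal H^\circ$-module. Once these formal compatibilities are in place the identity drops out in two lines; alternatively, one could give a direct proof by evaluating $\exp^*(\omega)$ on an arbitrary nonempty tree $t$ and matching against Murua's formula for $1/t!$, but the conceptual argument via \eqref{corr2} and \eqref{CHV4} is shorter and cleaner.
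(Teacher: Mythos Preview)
Your proof is correct and uses essentially the same approach as the paper: both rely on the automorphism property \eqref{CHV4}, the identity $\exp^*\delta_{\racine}=E\star\delta$, and module associativity of the $\Cal H^\circ$-action. The only cosmetic difference is the direction of the computation: the paper applies $E\star(-)$ to $\log^*\delta$ and obtains $\delta_{\racine}$, then cancels $E$, whereas you apply $\exp^*$ to $L\star\delta_{\racine}$ and obtain $\delta$; these are inverse readings of the same two-line argument.
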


\begin{proof}
We compute:
\begin{eqnarray*}
	E\star \log^*\delta&=&\log^*(E\star\delta)\\
					&=&\log^*\exp^*\delta_{\racine}\\
					&=&\delta_{\racine}.
\end{eqnarray*}
Hence $E\star \log^*\delta=E\star L\star\delta_{\racine}$, from which the theorem follows. 
\end{proof}

We call $\log^*\delta$ the CHV-Murua $\omega$-map after Chartier, Hairer, Vilmart \cite{CHV}, and Murua \cite{M}. One can find in those references some algorithms and recursive equations for computing the coefficients of $\omega$, which we will recover in the following subsection by means of a quasi-shuffle product. In fact, recalling the (free) pre-Lie structure on rooted trees corresponding to Connes--Kreimer's 
Hopf algebra, see \cite{ChaLiv} for details,  one deduces from the foregoing and \cite{EM, EMdendeq} that:
$$
	\Omega':=\sum_{n=0}^{\infty} \frac{B_n}{n!} L^{(n)}_{\curvearrowright}[\Omega'](\! \racine) = \sum_{t \in T} \frac{\omega(t)}{\sigma(t)}t.
$$ 
Here,  $L_{\curvearrowright}[a](b):=a \curvearrowright b$ is the left pre-Lie multiplication (in the free pre-Lie algebra over one generator $\begin{matrix}  \\[-0.5cm]  \racine \end{matrix}$). The first few terms on the left hand side are:
\begin{eqnarray*}
	\Omega'&=&\racine -\frac12 \racine \curvearrowright\! \racine 
							+ \frac14 (\racine \curvearrowright\! \racine) \curvearrowright\! \racine 
							+ \frac{1}{12} \racine \curvearrowright\! (\racine \curvearrowright\! \racine)  + \cdots\\
			&=&\racine - \frac12 \arbrea + \frac13 \arbreba + \frac{1}{12} \arbrebb +\cdots			
\end{eqnarray*}
A very interesting approach in terms of noncommutative
symmetric functions, along the lines of \cite{GKLLRT}, has been developed by W. Zhao \cite{Z}. Compare with \cite{C}\footnote{We would like to thank F. Chapoton who first pointed us to the link between the work \cite{EM} and the results in \cite{C}, and also pointed reference \cite{Z} to us.} and the second line of the table in subsection \ref{ssect:omegaval}.


\subsection{Quasi-shuffle products}
\label{ssect:quasi-shuffle}

\begin{defn}\label{def:qsh}
Let $k,l,r\in\N$ with $k+l-r>0$. A {\rm $(k,l)$-quasi-shuffle\/} of type $r$ is a surjective map $\pi$ from $\{1,\ldots ,k+l\}$ onto
$\{1,\ldots, k+l-r\}$ such that $\pi(1)<\cdots <\pi(k)$ and $\pi(k+1)<\cdots <\pi(k+l)$. We shall denote by $\mop{mix-sh}(k,l;r)$ the set of $(k,l)$-quasi-shuffles of type $r$. The elements of $\mop{mix-sh}(k,l;0)$ are the ordinary $(k,l)$-shuffles. Quasi-shuffles are also called {\rm mixable shuffles\/} or {\rm stuffles\/}. We denote by $\mop{mix-sh}(k,l)$ the set of $(k,l)$-quasi-shuffles (of any type).
\end{defn}

Let $A$ be a commutative (not necessarily unital) algebra. We denote by
$(a,b)\mapsto [ab]$ the product of $A$. Let $\Delta$
be the deconcatenation coproduct on ${\mathcal A}=T(A)=\bigoplus_{k\ge 0}A^{\otimes
  k}$, let $\diamond$ the product
on ${\mathcal A}$ defined by:
$$
	(v_1\cdots v_k)\diamond (v_{k+1}\cdots v_{k+l}) = \sum_{\pi\in\smop{mix-sh}(k,l)}w^\pi_1\cdots w^\pi_{k+l-r},$$
with~:
$$
	w^\pi_j=\prod_{i\in\{1,\ldots,k+l\}\atop \pi(i)=j}v_i
$$
(the product above is the product of $A$, and contains only one or two terms). The quasi-shuffle product $\diamond$ is commutative, and can also be recursively defined for $u=u_1\cdots u_k$ and $v=v_{1}\cdots v_{l}$ by:
\begin{equation}
\label{def:qs}
	u \diamond v = u_1\big( (u_2\cdots u_k)\diamond v\big) + v_1(u \diamond
    (v_2\cdots v_l)\big) + [u_1v_1]\big((u_2\cdots u_k)\diamond (v_2\cdots v_l)\big),
\end{equation}
or alternatively:
\begin{equation}
\label{def:qs2}
	u \diamond v = \big( (u_1\cdots u_{k-1})\diamond v\big)u_k + (u \diamond
    (v_1\cdots v_{l-1})\big)v_l + \big((u_1\cdots u_{k-1})\diamond (v_1\cdots
    v_{l-1})\big)[u_kv_l].
\end{equation}
Recall (see \cite{H1}) that $({\mathcal A} ,\diamond, \Delta)$ is a connected graded Hopf algebra, where $\Delta$ stands for the usual deconcatenation coproduct:
$$
	\Delta(u_1\cdots u_k)=\sum_{r=0}^s u_1\cdots u_r\otimes u_{r+1}\cdots u_k.
$$

\begin{rmk}{\rm{
When the multiplication of the algebra $A$ is set to $0$, the quasi-shuffle product $\diamond$ reduces to the ordinary shuffle product $\shu$.}}
\end{rmk}

Let $A$ be the field $k$ itself, and let ${\mathcal A}=T(A)=k[x]$ be the free
associative algebra over one generator $x$, which is identified with the unit
$1$ of the field $k$. We  equip ${\mathcal A}=T(k)$ with the
commutative quasi-shuffle product defined above. Note
that in the definition all the letters are equal to $x$ and we have
$[xx]=x$. The algebra $({\mathcal A},\diamond)$ is filtered, but not graded with respect to the number of
letters in a word. Example:
\begin{eqnarray*}
	x^2 \diamond x^2 &=& 2 x(x \diamond xx) + [xx] (x \diamond x)\\
	                         &=& 2 xxxx 
	                         		+ 2 xx (x \diamond x) 
						+ 2  x [xx] x 
						+ 2  [xx]xx 
	                         		+    [xx][xx] \\
				   &=& 2 xxxx 
	                         		+ 4 xxxx 
							+ 2  xx[xx] 
						+ 2 x [xx] x 
						+ 2 [xx]xx 
	                         		+  [xx][xx] \\
				   &=& 6 xxxx 
	                         		+ 2 x [xx] x 
						+ 2 [xx]xx 
						       + 2  xx[xx] 
	                         		+   [xx][xx]\\
&=& 6x^4+6x^3+x^2.
\end{eqnarray*}


\subsection{A Hopf algebra morphism}
\label{ssect:HopfAlgMorph}

Let $L:\Cal A \to \Cal A$ be the linear operator which multiplies with the single-lettered word $x$ on the right:
$$
	L(U):=Ux.
$$
It is immediate to show that $L$ is a coalgebra cocycle, namely:
$$
	\Delta\circ L=L\otimes \un+(\mop{Id}\otimes L)\circ\Delta
$$
where $\un:k\to {\mathcal A}$ is the unit. Due to the universal property of the Connes--Kreimer Hopf algebra, there is a unique Hopf algebra morphism $\Lambda:{\mathcal H}_{\makebox{{\tiny{CK}}}}\to {\mathcal A}$ such that:
\begin{equation}
\label{cocycles}
	\Lambda\circ B_+=L\circ\Lambda.
\end{equation}
In other words the morphism $\Lambda$ is recursively defined by $\Lambda(\!\begin{matrix}  \\[-0.5cm]  \racine \end{matrix})=x$ and:
\begin{equation}
\label{phi-rec}
	\Lambda(t) =L(\Lambda(t_1) \diamond \cdots \diamond \Lambda(t_n))= \bigl(\Lambda(t_1) \diamond \cdots \diamond \Lambda(t_n)\bigr)x
\end{equation}
for $t=B_+(t_1\cdots t_n)$. That is, briefly said, each rooted tree is mapped
to a particular polynomial by putting a quasi-shuffle product in each
branching point. The morphism $\Lambda$ is obviously surjective (because the
ladder with $n$ vertices $B_+^n(\un)$ is mapped on $x^n$). In order to study
the kernel of $\Lambda$ we introduce a quasi-shuffle like product on rooted trees
(which is not associative). For two rooted trees $a=B_+(a_1\cdots a_n)$ and
$b=B_+(b_1\cdots b_p)$ recall (\cite{CHV}, \cite{M}) the Butcher product $a\circ b$ which is obtained
by grafting the tree $b$ to the root of the tree $a$, namely:
\begin{equation*}
	a\circ b=B_+(a_1\cdots a_nb),
\end{equation*}
whereas the merging product $a\times b$ is obtained by merging the roots,
namely:
\begin{equation*}
	a\times b=B_+(a_1\cdots a_nb_1\cdots b_p).
\end{equation*}
The merging product is associative and commutative, whereas the Butcher product is
neither associative nor commutative. We remind the quasi-shuffle like product
on rooted trees defined by:
\begin{equation*}
	a\bowtie b:=a\circ b+b\circ a + a\times b,
\end{equation*}
which is implicitly used in \cite{CHV} (see Proposition 4.3 therein). 

\begin{prop}\label{qsqs}
For any rooted trees $a$ and $b$ we have:
\begin{equation*}
	\Lambda(a\bowtie b)=\Lambda(a)\diamond\Lambda(b).
\end{equation*}
\end{prop}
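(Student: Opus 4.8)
The plan is to unwind both sides directly, using the recursive definition \eqref{phi-rec} of $\Lambda$ together with the presentation \eqref{def:qs2} of the quasi-shuffle product $\diamond$ in terms of the \emph{last} letters of the words. Write $a=B_+(a_1\cdots a_n)$ and $b=B_+(b_1\cdots b_p)$, and set $U:=\Lambda(a_1)\diamond\cdots\diamond\Lambda(a_n)$ and $V:=\Lambda(b_1)\diamond\cdots\diamond\Lambda(b_p)$; these are well-defined words in ${\mathcal A}$ since $\diamond$ is associative. By \eqref{phi-rec} we then have $\Lambda(a)=Ux=L(U)$ and $\Lambda(b)=Vx=L(V)$.

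Next I would compute $\Lambda$ on each of the three terms of $a\bowtie b$. Since $a\circ b=B_+(a_1\cdots a_n b)$, $b\circ a=B_+(b_1\cdots b_p a)$ and $a\times b=B_+(a_1\cdots a_n b_1\cdots b_p)$, formula \eqref{phi-rec} together with associativity and commutativity of $\diamond$ gives
\begin{align*}
\Lambda(a\circ b)&=\bigl(U\diamond\Lambda(b)\bigr)x=\bigl(U\diamond L(V)\bigr)x,\\
\Lambda(b\circ a)&=\bigl(V\diamond\Lambda(a)\bigr)x=\bigl(L(U)\diamond V\bigr)x,\\
\Lambda(a\times b)&=(U\diamond V)x,
\end{align*}
so that $\Lambda(a\bowtie b)=\bigl((U\diamond L(V))+(L(U)\diamond V)+(U\diamond V)\bigr)x$. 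The heart of the proof is then to recognise this sum as $L(U)\diamond L(V)=\Lambda(a)\diamond\Lambda(b)$: applying the recursion \eqref{def:qs2} to the two words $L(U)=Ux$ and $L(V)=Vx$, and using that every letter of ${\mathcal A}=T(k)$ equals $x$ with $[xx]=x$, the three terms of \eqref{def:qs2} become precisely $(U\diamond L(V))x$, $(L(U)\diamond V)x$ and $(U\diamond V)x$. This finishes the argument. Note that both $L(U)$ and $L(V)$ have length at least $1$, so \eqref{def:qs2} applies with no degenerate case; if $n=0$ or $p=0$ then $U$ or $V$ is the empty word, which is harmless.

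I do not expect a genuine obstacle here. The only point needing a little care is bookkeeping the correspondence between the cocycle $B_+$ on the Connes--Kreimer Hopf algebra and the cocycle $L$ on ${\mathcal A}$ --- that is, checking that \eqref{phi-rec} really produces $L\bigl(U\diamond\Lambda(b)\bigr)$ (and not some other parenthesisation) for $a\circ b$, and that the single appended letter in the third term of \eqref{def:qs2} is indeed $[xx]=x$. Conceptually the statement is an instance of a general principle: since $\Lambda$ intertwines the two Hochschild cocycles $B_+$ and $L$, it automatically intertwines the products they induce, with $\bowtie$ on the tree side matching $\diamond$ on the word side; but the direct verification above is the shortest way to see it.
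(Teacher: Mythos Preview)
Your proof is correct and follows essentially the same route as the paper's: both compute via the recursive formula \eqref{def:qs2} applied to $\Lambda(a)=Ux$ and $\Lambda(b)=Vx$, matching the three resulting terms against $\Lambda(b\circ a)$, $\Lambda(a\circ b)$, $\Lambda(a\times b)$ (the paper just runs the computation in the opposite direction, starting from $\Lambda(a)\diamond\Lambda(b)$). One tiny terminological slip: $U$ and $V$ are linear combinations of words, not single words, but since \eqref{def:qs2} extends bilinearly and every word in $L(U)$, $L(V)$ ends in $x$, your application of it is perfectly valid.
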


\begin{proof}
According to the recursive definition \eqref{def:qs2} we have:
 \allowdisplaybreaks{
\begin{eqnarray*}
\lefteqn{\Lambda(a)\diamond\Lambda(b)=\big(\Lambda(a_1\cdots a_n)x\big)\diamond\big(\Lambda(b_1\cdots b_p)x\big)}\\
&=&\Big(\big(\Lambda(a_1)\diamond\cdots
\diamond \Lambda(a_n)\big)x\Big)\diamond\Big(\big(\Lambda(b_1)\diamond\cdots
\diamond \Lambda(b_p)\big)x\Big)\\
&=&\Big(\big(\Lambda(a_1)\diamond\cdots
\diamond \Lambda(a_n)\big)x\diamond \Lambda(b_1)\diamond\cdots
\diamond \Lambda(b_p)\Big)x
+\Big(\big(\Lambda(a_1)\diamond\cdots
\diamond \Lambda(a_n)\diamond\big(\Lambda(b_1)\diamond\cdots
\diamond \Lambda(b_p)\big)x\Big)x\\
&&\hspace{1cm}+\Big(\Lambda(a_1)\diamond\cdots
\diamond \Lambda(a_n)\diamond\Lambda(b_1)\diamond\cdots
\diamond \Lambda(b_p)\Big)x\\
&=&\Big(\big(\Lambda(a_1\cdots a_n)x\big)\diamond \Lambda(b_1\cdots b_p)\Big)x
+\Big(\Lambda(a_1\cdots a_n)\diamond\big(\Lambda(b_1\cdots b_p)\big)x\Big)x
+\Big(\Lambda(a_1\cdots a_n)\diamond \Lambda(b_1\cdots b_p)\Big)x\\
&=&\Big(\Lambda(ab_1\cdots b_p)+\Lambda(a_1\cdots a_n b)+\Lambda(a_1\cdots a_nb_1\cdots
b_p)\Big)x\\
&=&\Lambda(a\bowtie b).
\end{eqnarray*}}
\end{proof}

\begin{thm}\label{kerphi}
$\mop{Ker}\Lambda$ is the smallest ideal of ${\mathcal H}_{\makebox{{\tiny{\rm{CK}}}}}$ stable under $B_+$ and containing the expressions $a \bowtie b-ab$ for any rooted trees $a$ and $b$.
\end{thm}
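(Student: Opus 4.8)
The plan is to prove the two inclusions $J\subseteq\mop{Ker}\Lambda$ and $\mop{Ker}\Lambda\subseteq J$, where $J$ denotes the ideal described in the statement. The first is immediate: $\mop{Ker}\Lambda$ is an ideal because $\Lambda$ is an algebra morphism, it is stable under $B_+$ because $\Lambda\circ B_+=L\circ\Lambda$ by \eqref{cocycles} and $L$ is linear, and it contains every $a\bowtie b-ab$ because $\Lambda(a\bowtie b)=\Lambda(a)\diamond\Lambda(b)=\Lambda(ab)$ by Proposition \ref{qsqs} and the fact that $\Lambda$ is an algebra morphism (the product of $\mathcal A$ being $\diamond$); minimality of $J$ then gives $J\subseteq\mop{Ker}\Lambda$. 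Hence $\Lambda$ factors as $\Lambda=\bar\Lambda\circ q$ with $q\colon\mathcal H_{\makebox{{\tiny{CK}}}}\to B:=\mathcal H_{\makebox{{\tiny{CK}}}}/J$ the quotient map and $\bar\Lambda\colon B\to\mathcal A$ a surjective algebra morphism, so everything reduces to showing that $\bar\Lambda$ is injective.

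For this I would prove a spanning lemma by induction on $n$: every rooted forest with at most $n$ vertices is congruent modulo $J$ to an element of $V_n:=\mathrm{span}_k\{\un,\racine,\racine^2,\dots,\racine^n\}$ (with $\racine^0=\un$ and $\racine^j$ the forest of $j$ isolated roots). A forest with at least two connected components is a product of forests with strictly fewer vertices, and $V_a\cdot V_b\subseteq V_{a+b}$, so this case is handled by the inductive hypothesis. A tree $t$ with $n\ge2$ vertices is of the form $B_+(w)$ for a forest $w$ with $n-1$ vertices; using that $J$ is $B_+$-stable and that $B_+$ sends $\racine^j$ to the corolla $C_j$, the inductive hypothesis applied to $w$ shows that $t$ is congruent mod $J$ to a linear combination of $C_0,\dots,C_{n-1}$, and the corollas $C_j$ with $j\le n-2$ (having at most $n-1$ vertices) are absorbed by the inductive hypothesis. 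Thus everything comes down to the top corolla $C_{n-1}$.

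This top corolla is where the argument is delicate, and is the step I expect to be the main obstacle, since a direct induction is circular. The relation applied to the trees $\racine$ and $C_{n-2}$ gives, modulo $J$, $\racine\cdot C_{n-2}\equiv\racine\bowtie C_{n-2}=B_+(C_{n-2})+C_{n-1}+C_{n-2}$ (from $\racine\circ C_{n-2}=B_+(C_{n-2})$, $C_{n-2}\circ\racine=C_{n-1}$ and $\racine\times C_{n-2}=C_{n-2}$), hence $C_{n-1}\equiv\racine\cdot C_{n-2}-B_+(C_{n-2})-C_{n-2}$. By the inductive hypothesis $C_{n-2}\equiv\sum_{j=0}^{n-1}r_j\racine^j\pmod J$, and then $B_+(C_{n-2})\equiv\sum_{j=0}^{n-1}r_jC_j$ reintroduces the unknown $C_{n-1}$ with coefficient $r_{n-1}$. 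The way around is to pin down $r_{n-1}$ using the already-proven inclusion $J\subseteq\mop{Ker}\Lambda$: applying $\Lambda$ to $C_{n-2}\equiv\sum_jr_j\racine^j$ gives $\sum_jr_jx^{\diamond j}=\Lambda(C_{n-2})=L\bigl(x^{\diamond(n-2)}\bigr)=(n-2)!\,x^{n-1}+(\text{shorter words})$, and comparing the coefficient of the longest word $x^{n-1}$ — using $x^{\diamond j}=j!\,x^{j}+(\text{shorter words})$ — forces $r_{n-1}=\tfrac1{n-1}$. Feeding this back, every term other than $r_{n-1}C_{n-1}$ lies in $V_n$ modulo $J$ (by the inductive hypothesis for $C_0,\dots,C_{n-2}$ and $\racine\cdot V_{n-1}\subseteq V_n$), so $\tfrac{n}{n-1}\,C_{n-1}=(1+r_{n-1})C_{n-1}$ is congruent mod $J$ to an element of $V_n$; since $\tfrac{n}{n-1}\ne0$ in characteristic zero, $C_{n-1}$ itself lies in $V_n$ modulo $J$, and the induction closes.

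Finally, the lemma says that $B$ is spanned over $k$ by $\{q(\racine^n):n\ge0\}$. Since $\bar\Lambda$ sends these elements bijectively onto $\{x^{\diamond n}:n\ge0\}$, which is a $k$-basis of $\mathcal A$ (triangular with respect to the word-length filtration, as $x^{\diamond n}=n!\,x^{n}+(\text{shorter words})$), the map $\bar\Lambda$ carries a spanning set bijectively onto a basis and is therefore a vector-space isomorphism. Consequently $\mop{Ker}\Lambda=\mop{Ker}\,q=J$, which is the assertion; apart from the top-degree corolla just discussed, every step is either formal or a routine induction.
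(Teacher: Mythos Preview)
Your argument is correct, and it takes a genuinely different route from the paper's proof. The paper argues by a double induction on the filtration degree $|u|$ and on the ``number of tree factors'' $d(u)=\min\{p: u\in\bigoplus_{k\le p}\mathcal S^k(T)\}$, directly showing that every $u\in\mop{Ker}\Lambda$ lies in $J$: when $d(u)=1$ one writes $u=B_+(v)$ with $|v|<|u|$ and uses injectivity of $L$, and when $d(u)>1$ one replaces each top-degree monomial $v_1\cdots v_{p+1}$ by $(v_1\bowtie v_2)v_3\cdots v_{p+1}$ to decrease $d(u)$, the difference lying in $J$ by construction. This is purely structural and needs no explicit coefficients. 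Your approach instead exhibits an explicit normal form in $B=\Cal H_{\makebox{\tiny CK}}/J$ --- the classes of $\racine^{\,n}$ --- and then checks that $\bar\Lambda$ sends these to the triangular basis $\{x^{\diamond n}\}$ of $\mathcal A$. The price is the coefficient computation $r_{n-1}=\tfrac1{n-1}$ and the division by $\tfrac{n}{n-1}$, which leans on characteristic zero (harmless here, since the paper assumes it throughout); the payoff is that you obtain, as a by-product, an explicit $k$-basis of $B$ and a concrete identification $B\simeq\mathcal A$. Your trick of feeding the already-proved inclusion $J\subseteq\mop{Ker}\Lambda$ back into the induction to pin down $r_{n-1}$ is neat and is exactly what breaks the apparent circularity.
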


\begin{proof}
According to equation \eqref{cocycles} the kernel of $\Lambda$ is clearly stable under $B_+$, and moreover $u\in\mop{Ker}\Lambda$ if and only if $B_+(u)\in\mop{Ker}\Lambda$. According to Proposition \ref{qsqs}, any expression $a\bowtie b-ab$ belongs to $\mop{Ker}\Lambda$. Now let $u\in\mop{Ker}\Lambda$. Let
$|u|$ be the filtration degree of $u$, i.e.
$$
	|u|=\mop{min }\big\{n,\,u\in\bigoplus_{k=0}^n{\mathcal H}_{\makebox{{\tiny{CK}}}}^{(n)}\big\}.
$$
The only non-trivial element in $\mop{Ker}\Lambda$ of filtration degree $\le 2$
writes $u=\bullet\bowtie\bullet-\bullet\bullet$. Let us prove Theorem
\ref{kerphi} by induction on the filtration degree: suppose that any
$u\in\mop{Ker}\Lambda$ of filtration degree $\le n$ is of the type described in
the theorem, and let $u\in\mop{Ker}\Lambda$ of filtration degree $n+1$. Now
proceed by induction on the maximal number of subtrees, namely:
$$
	d(u)=\mop{min }\big\{p,\,u\in\bigoplus_{k=0}^p\Cal S^k(T)\big\},
$$
where $T$ is vector space freely spanned by the rooted trees. If
$d(u)=1$, then $u$ is a linear combination of trees, hence $u=B_+(v)$ with
$|v|=|u|-1=n$. As $v\in\mop{Ker}\Lambda$ the theorem is proved in this case. If
$d(u)=p>1$ then $u$ is a linear combination of forests. Suppose that the
theorem is verified up to $d(u)=p$, and let $u$ such that $|u|=n+1$ and
$d(u)=p+1$. Write the unique decomposition $u=v+w$ where $d(w)=p$ and $v\in
\mathcal{S}^{p+1}(T)$. Define $\wt v$ by replacing any forest $v_1\cdots
v_{p+1}$ by $(v_1\bowtie v_2)\cdots v_{p+1}$ in the explicit expression of
$v$. It is clear that $d(\wt v)=d(u)-1=p$ and that $v-\wt v$ belongs to the
ideal generated by $\{a\bowtie b-ab,\, a,b\in{T}\}$. The theorem
follows since we have $u=v-\wt v+(\wt v+w)$, with $\wt v+w\in\mop{Ker}\Lambda$
and $d(\wt v+w)=d(u)-1=p$, as well as $|\wt v+w|\le n+1$.
\end{proof}


\subsection{Applications to the CHV-Murua $\omega$ map}

\begin{lem}
Let $\widetilde\delta:\Cal A\to k$ the linear map defined by $\widetilde\delta(\un)=\widetilde\delta(x)=1$, and $\widetilde\delta(u)=0$ for any word of length $s\ge 2$. Then $\widetilde\delta$ is a character of $\Cal A$ for the quasi-shuffle
product $\diamond$, and we have:
\begin{equation*}
	\delta=\widetilde\delta\circ\Lambda.
\end{equation*}
\end{lem}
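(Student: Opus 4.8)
The plan is to prove the two assertions in turn, identifying $\Cal A=T(k)$ with the polynomial ring $k[x]$, so that the length-$j$ word becomes $x^j$ and $\widetilde\delta$ is the linear form with $\widetilde\delta(x^j)=1$ for $j\le1$ and $\widetilde\delta(x^j)=0$ for $j\ge2$. To see that $\widetilde\delta$ is a character of $(\Cal A,\diamond)$ I would, using bilinearity, test $\widetilde\delta(x^a\diamond x^b)=\widetilde\delta(x^a)\widetilde\delta(x^b)$ on monomials. The point to isolate is that, since every letter equals $x$ and $[xx]=x$, each word occurring in $x^a\diamond x^b$ is again a power of $x$, the contribution of type-$r$ quasi-shuffles landing in degree $a+b-r$ with $0\le r\le\min(a,b)$; hence every monomial of $x^a\diamond x^b$ has degree between $\max(a,b)$ and $a+b$. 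From this the verification is short: the cases $a=0$ or $b=0$ are immediate (the empty word is the $\diamond$-unit); the case $a=b=1$ is the identity $x\diamond x=2x^2+x$, for which both sides equal $1$; and in every remaining case $\max(a,b)\ge2$, so $\widetilde\delta$ annihilates $x^a\diamond x^b$ while $\widetilde\delta(x^a)\widetilde\delta(x^b)=0$. Together with $\widetilde\delta(\un)=1$ this gives the first claim.

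For the identity $\delta=\widetilde\delta\circ\Lambda$, note that $\Lambda$ is an algebra morphism and, by the first part, $\widetilde\delta$ is a character, so $\widetilde\delta\circ\Lambda$ is a character of $\Cal H_{\makebox{{\tiny{CK}}}}$; since $\delta$ is one as well, it suffices to compare the two on the algebra generators, i.e.\ on nonempty rooted trees. I would then show, by induction on the number of vertices $v(t)$, that $\Lambda(t)$ has vanishing constant term, and is in fact divisible by $x^2$ as soon as $v(t)\ge2$. The base case is $\Lambda(\racine)=x$. For the inductive step, write $t=B_+(t_1\cdots t_m)$ with $m\ge1$; by the induction hypothesis each $\Lambda(t_i)$ has no constant term, and the degree bound above shows that a $\diamond$-product of polynomials without constant term again has none, so $\Lambda(t_1)\diamond\cdots\diamond\Lambda(t_m)$ has no constant term and therefore $\Lambda(t)=\bigl(\Lambda(t_1)\diamond\cdots\diamond\Lambda(t_m)\bigr)x$ is divisible by $x^2$. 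Consequently $\widetilde\delta(\Lambda(t))=0=\delta(t)$ whenever $v(t)\ge2$, whereas $\widetilde\delta(\Lambda(\racine))=\widetilde\delta(x)=1=\delta(\racine)$, which finishes the argument.

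The only point needing a little care — the ``main obstacle'', modest as it is — is the lowest-degree behaviour of the quasi-shuffle product: unlike the ordinary shuffle, $\diamond$ can lower degrees through its merging terms, so the fact to keep in mind is that $x^a\diamond x^b$ lives in degrees $\ge\max(a,b)$. This single observation drives both the multiplicativity of $\widetilde\delta$ and the divisibility of $\Lambda(t)$ by $x^2$.
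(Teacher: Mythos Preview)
Your proof is correct and follows essentially the same approach as the paper. The key observation---that every monomial in $x^a\diamond x^b$ has degree at least $\max(a,b)$---is exactly what the paper uses to establish multiplicativity of $\widetilde\delta$; your treatment of the second assertion (the inductive divisibility of $\Lambda(t)$ by $x^2$ for $v(t)\ge 2$) simply spells out what the paper dismisses as ``a straightforward computation''.
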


\begin{proof}
If $u$ and $v$ are words of respective length $l(u)$ and $l(v)$, then
$u\diamond v$ is a linear combination of words of
length bigger than $\mop{sup}\big(l(u),l(v)\big)$. Hence $\widetilde\delta(u\diamond v)=0$ unless $u$ and $v$ are single-lettered
words. We have then in this case:
\begin{eqnarray*}
\widetilde\delta(u\diamond v)&=&\widetilde\delta(uv+vu+[uv])\\
						   &=&\widetilde\delta([uv])\\
						   &=&\widetilde\delta(u)\widetilde\delta(v),
\end{eqnarray*}
which proves that $\widetilde\delta$ is a character. The second assertion comes from a straightforward computation.
\end{proof}

Now set:
$$
	\widetilde\omega:=\log^*\widetilde\delta=K-\frac{K^{*2}}{2}+\frac{K^{*3}}{3}+\cdots,
$$
with $K:=\widetilde\delta-\varepsilon$. As $K$ vanishes on words containing more than one letter, we have for any word $u=u_1\cdots u_s$:
\begin{equation*}
	\widetilde\omega(u)=(-1)^{s+1}\frac{K^{*s}}{s}(u)=\frac{(-1)^{s+1}}{s}.
\end{equation*}
Due to the fact that $\Lambda$ is a Hopf algebra morphism we obviously get:

\begin{thm}\label{omegatilde}
\begin{equation*}
	\omega=\widetilde\omega\circ\Lambda.
\end{equation*}
\end{thm}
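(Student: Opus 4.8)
The plan is to read the identity off from the Lemma $\delta=\widetilde\delta\circ\Lambda$ just established, together with the fact that $\Lambda$ is a morphism of Hopf algebras. The only real input is the standard principle that the transpose of a coalgebra morphism is an algebra morphism for the convolution product. Concretely, I would let $\Lambda^*:\Cal A^*\to\Cal H_{\makebox{{\tiny{CK}}}}^*$, $\phi\mapsto\phi\circ\Lambda$, denote the transpose map and check that it is a unital algebra morphism for the respective convolution products: since $\Lambda$ is a coalgebra morphism,
\[
	(\phi\circ\Lambda)*(\psi\circ\Lambda)=m_k\circ(\phi\otimes\psi)\circ(\Lambda\otimes\Lambda)\circ\Delta_{\Cal H_{\makebox{{\tiny{CK}}}}}=m_k\circ(\phi\otimes\psi)\circ\Delta_{\Cal A}\circ\Lambda=(\phi*\psi)\circ\Lambda ,
\]
for all $\phi,\psi\in\Cal A^*$, and $\varepsilon_{\Cal A}\circ\Lambda=\varepsilon_{\Cal H_{\makebox{{\tiny{CK}}}}}$ because $\Lambda$ respects counits.

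Next I would note that $\Lambda$ is filtration-preserving, so that $\Lambda^*$ is continuous for the filtration topologies carried by the two convolution algebras. Indeed, iterating the recursion \eqref{phi-rec} and using that a quasi-shuffle of a word of length $a$ with a word of length $b$ only produces words of length $\le a+b$, one sees that a forest with $n$ vertices is sent by $\Lambda$ to a linear combination of words of length $\le n$. Hence $\Lambda^*$ restricts to a morphism of the two pro-unipotent convolution groups, and in particular commutes with the convergent series defining $\exp^*$ and $\log^*$: for any $\phi\in\Cal A^*$ with $\phi(\un)=1$ one gets $\log^*(\phi\circ\Lambda)=(\log^*\phi)\circ\Lambda$.

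It then only remains to apply this with $\phi=\widetilde\delta$. By the preceding Lemma $\widetilde\delta$ is a character of $(\Cal A,\diamond)$, so $\widetilde\delta(\un)=1$ and $\log^*\widetilde\delta=\widetilde\omega$ makes sense; likewise $\delta(\emptyset)=1$, so $\log^*\delta=\omega$ makes sense. Using $\delta=\widetilde\delta\circ\Lambda$,
\[
	\omega=\log^*\delta=\log^*(\widetilde\delta\circ\Lambda)=(\log^*\widetilde\delta)\circ\Lambda=\widetilde\omega\circ\Lambda ,
\]
which is the claim.

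I do not anticipate a genuine obstacle here. The one place that deserves an explicit sentence is the continuity of $\Lambda^*$ — equivalently, that $\Lambda$ preserves the filtrations — since this is what allows the infinite series $\log^*$ to be pushed through $\Lambda^*$; everything else is the routine duality between coalgebra morphisms and convolution-algebra morphisms.
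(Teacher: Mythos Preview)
Your proof is correct and is exactly the expansion of the paper's one-line justification ``Due to the fact that $\Lambda$ is a Hopf algebra morphism we obviously get''. You have simply made explicit the standard fact that the transpose of a coalgebra morphism respects convolution and hence commutes with $\log^*$; the continuity discussion is harmless overkill, since for any fixed forest only finitely many terms of $\log^*$ contribute by connectedness of the filtration.
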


\begin{cor}\label{formule}
Let $\omega_s(t)$ the number of words of length $s$ in $\Lambda(t)$, i.e:
\begin{equation*}
	\Lambda(t)=\sum_{s=1}^{|t|}\omega_s(t)x^s.
\end{equation*}
Then we have:
\begin{equation*}
	\omega(t)=\sum_{s\ge 1}\frac{(-1)^{s+1}}{s}\omega_s(t).
\end{equation*}
\end{cor}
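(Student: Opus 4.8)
The plan is to deduce the statement immediately from Theorem~\ref{omegatilde} together with the explicit values of $\widetilde\omega$ computed just above the corollary. First I would apply the identity $\omega=\widetilde\omega\circ\Lambda$ of Theorem~\ref{omegatilde} to the rooted tree $t$, and then expand $\Lambda(t)$ in the monomial basis $\{x^s\}_{s\ge 0}$ of $\Cal A=k[x]$. By the very definition of the integers $\omega_s(t)$ one has $\Lambda(t)=\sum_{s\ge 1}\omega_s(t)\,x^s$, a finite sum with no constant term (by the recursion \eqref{phi-rec} the polynomial $\Lambda(t)$ is always divisible by $x$).

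Since $\widetilde\omega$ is a linear map, the chain of equalities then reads
\[
	\omega(t)=\widetilde\omega\bigl(\Lambda(t)\bigr)=\sum_{s\ge 1}\omega_s(t)\,\widetilde\omega(x^s)=\sum_{s\ge 1}\frac{(-1)^{s+1}}{s}\,\omega_s(t),
\]
where in the last step I use that $x^s$ is precisely a word of length $s$ in the single letter $x$, so that the formula $\widetilde\omega(u)=(-1)^{s+1}/s$ for a word $u$ of length $s$, established just before the corollary, applies verbatim (it holds also for $s=1$, where $\widetilde\omega(x)=K(x)=1$).

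I do not expect any genuine obstacle here: the only two points that need a word of justification are that the expansion of $\Lambda(t)$ is a finite sum ranging over $s\ge 1$, and the evaluation $\widetilde\omega(x^s)=(-1)^{s+1}/s$, both of which are essentially already in hand. The corollary is thus a direct consequence of Theorem~\ref{omegatilde}.
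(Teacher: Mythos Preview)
Your proof is correct and matches the paper's approach exactly: the paper states the result as an immediate corollary of Theorem~\ref{omegatilde} and the formula $\widetilde\omega(u)=(-1)^{s+1}/s$ for a word $u$ of length $s$, without giving a separate proof. Your justification of the two small points (no constant term in $\Lambda(t)$ and the case $s=1$) is appropriate and leaves nothing missing.
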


We can easily see that $\omega_s(t)$ is the number of terms in the iterated coproduct:
$$
	\Delta^{(s)}(t)=\sum_{(t)}t^{(1)}\otimes\cdots\otimes t^{(s)}
$$
such that each $t_j$ is a bullet-forest $\bullet^{k_j}$ (with obviously $k_1+\cdots +k_s=|t|$). In view of formula \eqref{iter-coprod} it equals the number of ordered partitions of the tree $t$ into $s$ bullet-subforests. Hence our $\omega_s(t)$ coincides with Murua's one, and Corollary \ref{formule} gives exactly identity (40) in \cite{M}.

\begin{prop}
For any rooted trees $a$ and $b$, we have $\omega(a\bowtie b)=0$.
\end{prop}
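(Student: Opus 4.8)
The plan is to reduce the statement, via Theorem~\ref{omegatilde} and Proposition~\ref{qsqs}, to the fact that $\widetilde\omega$ is an infinitesimal character of the quasi-shuffle algebra $(\Cal A,\diamond)$. First I would recall that $\omega=\widetilde\omega\circ\Lambda$ by Theorem~\ref{omegatilde} and that $\Lambda(a\bowtie b)=\Lambda(a)\diamond\Lambda(b)$ by Proposition~\ref{qsqs}; combining the two gives at once
\[
\omega(a\bowtie b)=\widetilde\omega\bigl(\Lambda(a)\diamond\Lambda(b)\bigr),
\]
so the whole computation takes place inside Hoffman's Hopf algebra $(\Cal A,\diamond,\Delta)$.

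The key observation is then that $\widetilde\omega$ is an infinitesimal character of $(\Cal A,\diamond)$. Indeed, $(\Cal A,\diamond,\Delta)$ is a connected filtered Hopf algebra (the length filtration is compatible both with $\diamond$ and with the deconcatenation coproduct $\Delta$), so the framework of Section~\ref{sect:prelim}, applied with ground algebra $k$, gives that $\exp^*$ restricts to a bijection between infinitesimal characters and characters of $\Cal A$, with inverse $\log^*$. Since $\widetilde\delta$ is a $\diamond$-character by the Lemma above, its logarithm $\widetilde\omega=\log^*\widetilde\delta$ is an infinitesimal character, i.e.
\[
\widetilde\omega(U\diamond V)=\widetilde\omega(U)\,\varepsilon(V)+\varepsilon(U)\,\widetilde\omega(V)
\]
for all $U,V\in\Cal A$, where $\varepsilon$ is the counit of $\Cal A$, which vanishes on every word of positive length.

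To conclude I would specialize this identity to $U=\Lambda(a)$ and $V=\Lambda(b)$. For any nonempty rooted tree $t$, Corollary~\ref{formule} writes $\Lambda(t)=\sum_{s\ge 1}\omega_s(t)\,x^s$, a polynomial in $x$ with no constant term, so $\varepsilon(\Lambda(a))=\varepsilon(\Lambda(b))=0$ and the displayed identity yields $\omega(a\bowtie b)=0$; in particular $\omega(a\circ b)+\omega(b\circ a)+\omega(a\times b)=0$. I do not anticipate a genuine obstacle here: the only delicate point is the assertion that $\log^*$ of a character is an infinitesimal character, and this is precisely the pro-unipotent/pro-nilpotent correspondence already recorded in Section~\ref{sect:prelim}. (One could instead check $\widetilde\omega(U\diamond V)=0$ for words $U,V$ of length $\ge1$ by hand, starting from $\widetilde\omega(u_1\cdots u_s)=(-1)^{s+1}/s$ and the recursion~\eqref{def:qs}, but this is longer and, in view of the above, superfluous.)
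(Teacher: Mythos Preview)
Your proof is correct and follows exactly the paper's own route: reduce via Theorem~\ref{omegatilde} and Proposition~\ref{qsqs} to $\widetilde\omega\bigl(\Lambda(a)\diamond\Lambda(b)\bigr)$, then use that $\widetilde\omega=\log^*\widetilde\delta$ is an infinitesimal character of $(\Cal A,\diamond)$ and that $\Lambda(a),\Lambda(b)$ lie in the augmentation ideal. The paper states this in one line while you spell out the justification that $\log^*$ of a character is an infinitesimal character and that $\varepsilon(\Lambda(t))=0$, but the argument is the same.
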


\begin{proof}
As a consequence of Proposition \ref{qsqs} and Theorem  \ref{omegatilde}, we have $\omega(a\bowtie b)=\wt\omega \big(\Lambda(a)\diamond\Lambda(b)\big)=0$, because $\wt\omega$ is an infinitesimal character of ${\mathcal A}$, and as such vanishes on any
non-trivial diamond product.
\end{proof}

\begin{rmk}{\rm{
 Propositions 4.3, 4.5 and 4.7 in \cite{CHV} can be retrieved from this result.}}
\end{rmk}


\subsection{Generalized multinomial coefficients}

A simple formula is given for the quasi-shuffle product on $\Cal A$:
$$
	x^k \diamond x^l =
	\sum_{k=0}^{n} \binom{l+k-r}{k}\binom{k}{r} x^{l+k-r}.
$$
Indeed, the number of $(k,l)$-quasi-shuffles of type $r$ is equal to:
\begin{equation*}
	\mop{qsh}(k,l;r)={k+l-r\choose k}{k\choose r}=\frac{(k+l-r)!}{(k-r)!(l-r)!r!}.
\end{equation*}
This can be seen as follows: there is ${k+l-r\choose k}$ choices for the images $\pi_1,\dots, \pi_k$ inside $\{1,\ldots k+l-r\}$. The whole quasi-shuffle $\pi$ is then determined by the overlaps, i.e. the choice of a subset $E$ of $\{1,\ldots l\}$ containing $r$ elements, such that $j\in E$ if and only if $\pi^{-1}\big(\pi(j)\big)$ has exactly two elements. There are ${k\choose r}$ choices of overlaps, which proves the claim. We can also define the quasi-shuffle multinomial coefficient $\mop{qsh}(k_1,\ldots k_n;r)$ as the number of surjective maps $\pi:\{1,\ldots, \sum k_j\}\to \{1,\ldots, \sum k_j-r\}$ such that $\pi_{k_1+\cdots +k_j+1}<\cdots <\pi_{k_1+\cdots +k_{j+1}}$ for any $j=0,\ldots ,n-1$. It is also given by the coefficient of $x^{k_1+\cdots +k_n-r}$ in $x^{k_1}\diamond\cdots\diamond x^{k_n}$. Of course when $r=0$ we recover the usual mutinomial coefficients.

\bigskip

The coefficients $\omega_s(t)$ defined by Murua (see preceding section) can be interpreted as tree versions of these quasi-shuffle multinomial coefficients. To see this let us change the notations of the preceding section, by setting for any rooted tree $t$:
\begin{equation*}
	C_s(t):=\omega_{|t|-s}(t).
\end{equation*}
In other words, $C_s(t)$ is the coefficient of $x^{|t|-s}$ in the polynomial $\Lambda(t)$.

\begin{prop}\label{qsbc}
For any rooted tree $t=B_+(t_1,\ldots ,t_n)$ we have:
\begin{equation*}
	C_s(t)=\sum_{j=0}^s
	          \sum_{r_1+\cdots r_n=s-j, \atop r_1,\ldots, r_n\ge 0}\mop{qsh}(|t_1|-r_1,\ldots,|t_n|-r_n;j)\,C_{r_1}(t_1)\cdots C_{r_n}(t_n).
\end{equation*}
\end{prop}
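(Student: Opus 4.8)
The plan is to read off the coefficient $C_s(t)$ directly from the one-step recursion \eqref{phi-rec} for $\Lambda$ and then invoke the generating-function description of the quasi-shuffle multinomial coefficients recalled just above the statement; no induction on the tree is needed.

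First I would settle the degree bookkeeping. Since $t=B_+(t_1,\dots,t_n)$ has $|t|=1+\sum_{i=1}^n|t_i|$ vertices, and $\Lambda(t)=\bigl(\Lambda(t_1)\diamond\cdots\diamond\Lambda(t_n)\bigr)x$ by \eqref{phi-rec}, stripping off the trailing letter $x$ shows that $C_s(t)$, i.e. the coefficient of $x^{|t|-s}$ in $\Lambda(t)$, equals the coefficient of $x^{|t|-s-1}=x^{(\sum_i|t_i|)-s}$ in the polynomial $\Lambda(t_1)\diamond\cdots\diamond\Lambda(t_n)$.

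Next I would substitute $\Lambda(t_i)=\sum_{r_i\ge 0}C_{r_i}(t_i)\,x^{|t_i|-r_i}$ — a finite sum, since $\Lambda(t_i)$ only involves $x^1,\dots,x^{|t_i|}$ by Corollary \ref{formule} — expand the diamond product by multilinearity, and apply the identity $x^{k_1}\diamond\cdots\diamond x^{k_n}=\sum_{j\ge 0}\mop{qsh}(k_1,\dots,k_n;j)\,x^{k_1+\cdots+k_n-j}$, which is precisely the definition of the quasi-shuffle multinomial coefficient. Taking $k_i=|t_i|-r_i$, the monomial $x^{(\sum_i|t_i|)-s}$ is produced exactly by those index tuples $(r_1,\dots,r_n,j)$ with $\sum_i r_i+j=s$; collecting the corresponding terms yields the asserted formula, the range $0\le j\le s$ coming from $\sum_i r_i=s-j\ge 0$.

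The only points demanding a little care — rather than any genuine obstacle — are: (i) the unit shift introduced by $B_+$, which turns the target exponent $|t|-s$ into $(\sum_i|t_i|)-s$ once the final $x$ is removed; (ii) noting that every $k_i=|t_i|-r_i$ occurring with $C_{r_i}(t_i)\ne 0$ satisfies $k_i\ge 1$, so the coefficients $\mop{qsh}(k_1,\dots,k_n;j)$ are evaluated on admissible arguments (any putative out-of-range term contributes $0$ anyway); and (iii) the harmless rearrangement of the finite sums involved. Once these are in place the computation closes immediately.
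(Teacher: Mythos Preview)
Your argument is correct and is exactly the approach the paper takes: the paper's own proof consists of the single sentence that the identity ``amounts to the equality $\Lambda(t)=\big(\Lambda(t_1)\diamond\cdots\diamond\Lambda(t_n)\big)x$ by considering the coefficient of $x^{|t|-s}$ in both sides,'' and you have simply unpacked that coefficient extraction in full detail. Your remarks (i)--(iii) on the degree shift from $B_+$, the admissibility of the arguments of $\mop{qsh}$, and the finiteness of the sums are all accurate and make explicit what the paper leaves to the reader.
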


\begin{proof} This amounts to the equality:
$$
	\Lambda(t)=\big(\Lambda(t_1)\diamond\cdots\diamond\Lambda(t_n)\big)x
$$
by considering the coefficient of $x^{|t|-s}$ in both sides.
\end{proof}

Applying Proposition \ref{qsbc} to the generalized corolla
${\mathcal C}_{k_1,\ldots, k_n}=B_+(E_{k_1},\ldots E_{k_n})$ (here $E_j$ stands
for the ladder with $j$ vertices $B_+^j(\bullet)$) we find:
\begin{equation*}
	C_s({\mathcal C}_{k_1,\ldots, k_n})=\mop{qsh}(k_1,\ldots k_n;s).
\end{equation*}
Proposition \ref{qsbc} for $s=0$ reduces to:
\begin{equation*}
	C_0(t)=\frac{(|t_1|+\cdots +|t_n|)!}{|t_1|!\cdots |t_n|!}C_0(t_1)\cdots C_0(t_n),
\end{equation*}
leading to:
\begin{equation*}
	C_0(t)=\frac{|t|!}{t!}.
\end{equation*}
These are generalizations of the usual multinomial coefficients, which can be
recovered as:
\begin{equation*}
	\frac{(k_1+\cdots + k_n)!}{k_1!\cdots k_n!}=C_0({\mathcal C}_{k_1,\ldots, k_n}).
\end{equation*}


\section{Some computations}
\label{sect:comp}


\subsection{Antipode computation for some trees} We list the values of the antipode $S_\sigma : \Cal H \to \Cal H$ up to order 5. Observe the conservation of the number of edges.

 \allowdisplaybreaks{	
\begin{eqnarray*}
S_\sigma(\racine)&=&\racine\\
&&\\
S_\sigma(\arbrea)&=&-\arbrea\\
&&\\
S_\sigma(\arbreba)&=&-\arbreba+2\arbrea\arbrea\\
S_\sigma(\arbrebb)&=&-\arbrebb+\arbrea\arbrea\\
&&\\
S_\sigma(\arbreca)&=&-\arbreca+5\arbreba\arbrea-5\arbrea\arbrea\arbrea\\
S_\sigma(\arbrecb)&=&-\arbrecb+4\arbreba\arbrea+\arbrebb\arbrea-6\arbrea\arbrea\arbrea\\
S_\sigma(\arbrecc)&=&-\arbrecc+2\arbreba\arbrea+3\arbrebb\arbrea-5\arbrea\arbrea\arbrea\\
S_\sigma(\arbrecd)&=&-\arbrecd+2\arbrebb\arbrea-\arbrea\arbrea\arbrea\\
&&\\
S_\sigma(\arbreda)&=&-\arbreda+6\arbreca\arbrea+3\arbreba\arbreba-21\arbreba\arbrea\arbrea+14\arbrea\arbrea\arbrea\arbrea\\
S_\sigma(\arbredb)&=&-\arbredb+2\arbreca\arbrea+2\arbrecb\arbrea+\arbreba\arbrebb
-18\arbreba\arbrea\arbrea-\arbrebb\arbrea\arbrea+18\arbrea\arbrea\arbrea\arbrea\\
S_\sigma(\arbredh)&=&-\arbredh+2\arbrecd\arbrea+\arbrebb\arbrebb-3\arbrebb\arbrea\arbrea+\arbrea\arbrea\arbrea\arbrea\\
&&\\
S_\sigma(\arbreea)&=&-\arbreea+7\arbreda\arbrea+7\arbreca\arbreba
-28\arbreca\arbrea\arbrea-28\arbreba\arbreba\arbrea+84\arbreba\arbrea\arbrea\arbrea-42\arbrea\arbrea\arbrea\arbrea\arbrea\\
S_\sigma(\arbreez)&=&-\arbreez+2\arbredh\arbrea+2\arbrecd\arbrebb
-3\arbrecd\arbrea\arbrea-3\arbrebb\arbrebb\arbrea
+4\arbrebb\arbrea\arbrea\arbrea-\arbrea\arbrea\arbrea\arbrea\arbrea.\\
\end{eqnarray*}}


\subsection{Values of $\omega$ up to degree 5}
\label{ssect:omegaval}
Here are the values of $L$ and $L_\sigma$ (i.e. $\omega$ and $\omega_\sigma$) on the trees up to 5 vertices (compare with \cite{CHV,M} and \cite{C}):

\begin{equation*}
\begin{disarray}{c||c|c|cc|cccc|ccccccccc}
t&\racine &\arbrea &\arbreba &\arbrebb &\arbreca &\arbrecb &\arbrecc &\arbrecd &\arbreda
&\arbredb &\arbredc &\arbredd &\arbrede &\arbredf &\arbredz &\arbredg &\arbredh\\ 
&&&&&&&&&&&&&&&&&\\
\hline
&&&&&&&&&&&&&&&&&\\
\omega(t)&1&-\frac 12 &\frac 13&\frac 16&-\frac 14&-\frac 16&-\frac 1{12}&0
&\frac 15&\frac 3{20}&\frac 1{10}&\frac 1{30}&\frac 1{20} &\frac 1{30}&\frac 1{60} &-\frac 1{60} &-\frac 1{30}\\
&&&&&&&&&&&&&&&&&\\
\hline
&&&&&&&&&&&&&&&&&\\
\frac{\omega(t)}{\sigma(t)}&1&-\frac 12 &\frac 13&\frac 1{12}&-\frac 14&-\frac 1{12}&-\frac 1{12}&0
&\frac 15&\frac 3{40}&\frac 1{10}&\frac 1{180}&\frac 1{20} &\frac 1{60}&\frac 1{120} &-\frac 1{120} &-\frac 1{720}
\end{disarray}
\end{equation*}

\bigskip 

\subsection*{Acknowledgements}
The authors would like to thank Fr\'ed\'eric Chapoton, Philippe Chartier, Ander Murua and Gilles Vilmart for enlightening discussions. As well we thank Lo\"\i c Foissy and Jose Gracia--Bond\`\i a for useful suggestions and comments. We also thank the referee for very helpful remarks and suggestions. Finally we thank Emmanuel Vieillard-Baron and Fr\'ed\'eric Fauvet for having pointed out an error in the antipode formula of section \ref{sect:antipo} in a previous version. The research of D.C. is fully supported by the European Union thanks to a Marie Curie Intra-European Fellowship (contract  number MEIF-CT-2007-042212).

\end{document}